\documentclass[12pt]{article}
\usepackage{float}
\usepackage{amsmath, amssymb, amsthm, latexsym,color, natbib,MnSymbol}
\usepackage{algorithmicx}
\usepackage{fancyhdr}
\usepackage{algpseudocode}
\usepackage{floatpag}

\usepackage{graphicx,subfigure,subcaption}
\usepackage{enumerate}
\usepackage{placeins}
\usepackage{natbib, makecell, booktabs}
\usepackage{color}
\usepackage{amsbsy}
\usepackage{amsmath}
\usepackage{amssymb}
\usepackage{comment}
\usepackage{amsfonts}
\usepackage{multirow,latexsym}
\usepackage{bm}
\usepackage{adjustbox}
\usepackage{ragged2e}
\usepackage[linesnumbered,ruled,vlined]{algorithm2e}

\allowdisplaybreaks[4]
\newtheorem{theorem}{Theorem}
\newtheorem{remark}{Remark}
\newtheorem{lemma}{Lemma} 
\newtheorem{corollary}{Corollary}
\newtheorem{proposition}{Proposition}

\renewcommand{\hat}{\widehat}
\def\singlespace{\def\baselinestretch{1}\@normalsize}

\def\wh{\widehat}
\def\wt{\widetilde}

\newcommand{\argmin}{{\rm argmin}}
\newcommand{\argmax}{{\rm argmax}}

\newcommand{\diag}{{\rm diag}}

\newcommand{\var}{{\rm Var}}

\def\E{{\rm E}} 

\newcommand{\bA}{{\mathbf A}}
\newcommand{\bB}{{\mathbf B}}
\newcommand{\bD}{{\mathbf D}}
\newcommand{\bF}{{\mathbf F}}

\newcommand{\bG}{{\mathbf G}}
\newcommand{\bH}{{\mathbf H}}
\newcommand{\bI}{{\mathbf I}}
\newcommand{\bK}{{\mathbf K}}
\newcommand{\bL}{{\mathbf L}}
\newcommand{\bM}{{\mathbf M}}

\newcommand{\bR}{{\mathbf R}}

\newcommand{\bU}{{\mathbf U}}
\newcommand{\bV}{{\mathbf V}}

\newcommand{\bX}{{\mathbf X}}
\newcommand{\bY}{{\mathbf Y}}
\newcommand{\bZ}{{\mathbf Z}}
\newcommand{\ba}{{\mathbf a}}
\newcommand{\bb}{{\mathbf b}}

\newcommand{\bx}{{\mathbf x}}
\newcommand{\by}{{\mathbf y}}

\newcommand{\bbeta}  {\boldsymbol{\beta}}
\newcommand{\bfeta}  {\boldsymbol{\eta}}

\newcommand{\btheta} {\boldsymbol{\theta}}

\newcommand{\bGamma} {\boldsymbol{\Gamma}}
\newcommand{\bLambda} {\boldsymbol{\Lambda}}
\newcommand{\bC}{{\mathbf C}}

\newcommand{\bnu}{\boldsymbol{\nu}}

\def\6bullets{\bullet\bullet\bullet\bullet\bullet\bullet}

\DeclareMathAlphabet\EuScriptBF{U}{eus}{b}{n}

\def \b1{{\bf 1}}
\def \f{{\bf f}}
\def \bY{{\bf Y}}

\begin{document}

\def\spacingset#1{\renewcommand{\baselinestretch}%
	{#1}\small\normalsize} \spacingset{1}
\title{\bf Dynamic Networks with Node Heterogeneity and Homophily}
\author{Binyan Jiang\thanks{Author names in alphabet order.} \hspace{.2cm}\\
	Department of Data Science and Artificial Intelligence \\
	The Hong Kong Polytechnic University\\
    \\
	Qiwei Yao \quad and \quad Xinyang Yu \\
	Department of Statistics\\
	London School of Economics and Political 
    Science}
\maketitle

\begin{abstract}
    The goal of this paper is to model node heterogeneity and link homophily for dynamic networks. The proposed framework brings new insights on how networks evolve over time. It also provides more sophisticated tools for the prediction of  future networks with statistical guarantees.  The new model accounts for the link homophily associated with both observed traits and latent traits. The joint modeling of node heterogeneity and both observed and latent homophily effects 
    also poses the significant challenge in statistical inference, resulted from the large number of confounding parameters in the model. To overcome this, we propose a novel normalized squared loss, paving the way for efficient and stable  estimation of parameters in a high-dimensional setting. We provide a rigorous theoretical analysis of the estimation method, and demonstrate its effectiveness through extensive simulations and the illustration with some real-world network data.

\end{abstract}
{Keywords: Dynamic networks; Homophily; Node heterogeneity; Normalized squared loss.}

\section{Introduction}
The literature on network analysis is broad and spans mathematics, statistics, computer science, biology,  social sciences and other disciplines \citep{ek17}. In recent years, statistically grounded methods for network analysis have been developed.
See, for example, \cite{wiuf2006likelihood, Bickel2009, goldenberg2010survey, Graham20, Borsboom21}. However, the conceptual and methodological developments for dynamic network analysis are lagged behind, and many basic questions remain open \citep{fienberg2012brief, ek17, ANJiang}. In particular, while important stylized features such as node heterogeneity, homophily, and latent structure have been extensively studied for static networks, their joint treatment for dynamic network models is still missing. The goal of this study is to fill in this gap.

Node heterogeneity is one of the most common features of network data. In static network analysis, it is usually linked to degree heterogeneity, meaning that some nodes are systematically more active or more popular than others. This feature is often modeled through node-specific parameters, as in the classical $\beta$-model \citep{chatterjee2011} and its variations \citep{graham2017econometric, yan2019statistical, chen2021analysis, stein2021sparse}. For dynamic networks, however, node heterogeneity is richer than degree variation alone. Nodes may differ not only in their overall levels of connectivity, but also in their tendencies in forming new links and/or dissolving existing ones. This extra layer of heterogeneity, together with temporal dependence in the network process, creates major challenges for statistical inference and its associated theory \citep{jiang2025two, chang2024autoregressive}.

Link homophily refers to the tendency for nodes with similar traits (such as age, race, hobby, occupation, economic/social status and etc) to be connected \citep{MLC2001, graham2016}. Homophily  affects edge formation directly, and shapes the global structure of networks. In particular, strong homophily may lead to modular or community-like structures with dense within-group connections and relatively sparse between-group connections. 
Effective inference methods have been developed for community detection and related problems using both observed and latent traits for static networks; see, for example, \cite{yan2021covariate, hu2024network} and the references therein.  

In this paper, we propose a dynamic network model that jointly accommodates  node heterogeneity and link homophily. For the latter, we allow both observed and latent node traits. By doing so, the statistical inference for the model is inevitably challenging. First, the number of parameters in the model is large, and is typically in the order of the number of nodes. Since real networks are often large in relation to the length of the observed network sequence, this is a high-dimensional inference problem.
But unlike most high-dimensional problems, there exist no obvious low-dimensional structures such as sparsity in our model.
Secondly, due to the presence of the parameter representing different stylized feature coupled with dynamic transitions, the negative log-likelihood is highly nonconvex, somehow similar to but more complex than
 that for dynamic $\beta$-model of \cite{jiang2025two}. Such nonconvexity, especially in high-dimensional settings, can render the parameter estimation numerically unstable. Furthermore, it is hard to establish the uniform convergence for the
 estimators without further structural assumptions. To overcome those obstacles, we introduce a new normalized squared loss function which leads to a bi-convex optimization problem. We solve
 the optimization problem by a simple iterative algorithm which is computationally stable. 
 Note static formulations such as the $\beta$-model with covariates \citep{yan2019statistical} and latent space models \citep{ma2020universal} can handle degree heterogeneity and homophily in static networks.  But extending them to dynamic settings with additional node-level heterogeneity and temporal dependence is far from straightforward.

To position properly our proposal in relation to the existing literature, Table \ref{models} compares our model with several benchmarks, including the $\beta$-model with covariates of \cite{graham2017econometric, yan2019statistical},  latent space models of \cite{ma2020universal, li2023statistical},  autoregressive network model of \cite{ANJiang}, and its extension with node heterogeneity of \cite{jiang2025two}. As indicated in the table, our new model is designed to  capture node heterogeneity, observed homophily, latent homophily, and temporal network dynamics altogether, while also allowing for theoretical guarantees when the number of nodes diverges.

 \begin{table}[ht] 
\centering
\small
\caption{Capabilities of some network models}
\label{models}
\begin{tabular}{|c|c|c|c|c|c|}
\hline
\multirow{2}{*}{Model} & Node   & \multicolumn{2}{c|}{Homophily} & \multirow{2}{*}{Dynamic} & Uniform     \\ 
\cline{3-4}
& Heterogeneity & Observed & Latent & & Consistency \\
\hline
\cite{graham2017econometric, yan2019statistical} & $\checkmark$ & $\checkmark$ &  & & $\checkmark$  \\ \hline
\cite{ma2020universal} & $\checkmark$ & $\checkmark$ & $\checkmark$ &  &   \\ \hline
\cite{li2023statistical}& $\checkmark$ &  & $\checkmark$ &  & $\checkmark$   \\ \hline
\cite{ANJiang} &  &  &  & $\checkmark$ & $\checkmark$ \\ \hline
\cite{jiang2025two} & $\checkmark$ &  &  & $\checkmark$ & $\checkmark$ \\ \hline
Our model & $\checkmark$ & $\checkmark$ & $\checkmark$ & $\checkmark$ & $\checkmark$      \\ \hline
\end{tabular}
\end{table}
We summarize the main contributions of this paper below.
\begin{itemize}
    \item We propose a new dynamic network model that jointly accommodates  node heterogeneity and link homophily with both observed and latent traits. 
    
    \item We introduce a normalized squared loss, together with a suitable parameter transformation, for estimating the high-dimensional model parameters. This formulation leads to a biconvex optimization problem and supports a simple iterative estimation procedure. Compared with likelihood-based approaches, the proposed loss-based framework is computationally more stable and can be easily extended to settings with weighted or dependent edges. Moreover, the alternating scheme typically stabilizes after one or two local updates, so the computational burden is essentially that of solving a small number of convex problems and low-rank spectral thresholding steps. This efficiency enables the method to scale effectively to large networks.
    
    \item We establish theoretical guarantees for the proposed estimators obtained in the optimization algorithms. In particular, we show consistency of a simple two-step estimator under suitable initialization, provide guidance on the choice of initial values, and derive sufficient conditions for the consistency of the estimator obtained with further local iterations. We also develop a concise argument, building on the uniform deviation bounds of \cite{jiang2025two}, to establish uniform convergence for the high-dimensional node-specific parameters. This part of the analysis may also be useful in other network models with high-dimensional node-specific parameters.
\end{itemize}

The rest of the paper is organized as follows. Section \ref{sec: Homophily} introduces the proposed 
model. 
Sections \ref{sec: parameter estimation} and \ref{sec:theory} present the estimation procedure and the associated theoretical results. Section \ref{sec:numerical_study} reports simulation studies and real-data analysis of citation networks and co-authorship networks. Section \ref{sec: conclusion} concludes the paper. All technical proofs are collected in the Appendix.

We use $\bI_p$ to denote the $p\times p$ identity matrix, and ${\bf{1}}_p$ the $p\times 1$ vector with all elements equal to 1.
For vector $\ba=(a_1, \cdots, a_p)^\top$, 
let $\|\ba\|_q = ( \sum_{i=1}^p |a_i|^q )^{1/q}$ for any $q\ge 1$, and $\|\ba\|_\infty = \max_i |a_i|$. For 
$p\times p$ matrix $\bA=(A_{i,j})$, let $\|\bA\|_F$ be its Frobenius norm, 
$\|\bA\|_2$ be its spectral norm, 
$
\|\bA\|_1 = \max_{1 \le j \le p} \sum_{i=1}^p |A_{i,j}|, 
$ $
\|\bA\|_\infty = \max_{i,j} |A_{i,j}|$, $\|\bA\|_{2,\infty}$ denotes the largest $\ell_2$ norm of its rows, and
$\|\bA\|_* = \sum_{i=1}^{p} \sigma_{i}(\bA)$ where $\sigma_{i}(A)$ are the singular values of $\bA$. Denoted by $x \lesssim y$, or equivalently $y \gtrsim x$, if there exists a constant $c>0$ such that $|x| \le c|y|$. We write $x \asymp y$ when both $x \lesssim y$ and $y \lesssim x$ hold. 
We use $c, c_1, c_2, \cdots $ to denote positive generic constants which may be different at different places.

\section{Models}\label{sec: Homophily}
Consider a dynamic network with the fixed $p$ nodes labeled as $1, \cdots, p$. Let 
$\bX_t \equiv (X_{i,j}^t)$ be its 
adjacency matrix at time $t$, i.e. $\bX_t$ is
$p\times p$, and
 $X_{i,j}^t = 1$ or 0, indicating an edge between nodes $i$ and $j$ at time $t$ or no edge. 
For simplicity, we assume that the network is undirected (i.e., $X_{i,j}^t \equiv X_{j,i}^t$ for all $1\le i<j\le p$) without self-loops (i.e., $X_{i,i}^t \equiv 0$ for $i\in[p]$), though both the methods and the theory concerned can be extended to directed networks. We also assume that, for each fixed $t$, the edges $X_{i,j}^t$, $1 \le i < j \le p$, are independent. 

We consider two types of traits for homophily, i.e. observed $q\times 1$ vector $\bZ_{i,j}\equiv \bZ_{j,i}$ reflecting the similarity between nodes $i$ and $j$, and latent $d\times 1$ vector $\bU_i=(\bU_{i,1}^\top, \bU_{i,2}^\top)^\top$ representing unobservable trait of node $i$, where $\bU_{i,1}, \bU_{i,2}$ are, respectively, $d_1\times 1$ and $d_2 \times 1$, and $d_1 + d_2 =d$. For simplicity of presentation, we treat all $\bZ_{i,j}$ and $\bU_i$ as constants.  The analogue can be drawn from linear regression analysis in which all regressors are treated as constants. 
We assume that
the transition probabilities of the network admit the following forms: For $1\le i <j \le p$,
\begin{align}\label{T0}
	&P(X_{i,j}^t=1 \mid X_{i,j}^{t-1}=0) = 
    \theta_i \theta_j (\bZ_{i,j}^\top \bbeta
    + \bU_{i,1}^\top \bLambda_1 \bU_{j,1})
    \equiv
    A_{i,j},
    \quad \text{and} \\ \nonumber
	&P(X_{i,j}^t=0 \mid X_{i,j}^{t-1}=1) = 
    \eta_i \eta_j (\bZ_{i,j}^\top \bnu 
    + \bU_{i,2}^\tau \bLambda_2 \bU_{j,2})
    \equiv B_{i,j}.
\end{align}
In the model above, $\btheta = (\theta_1, \ldots, \theta_p)^\top$ and $\bfeta = (\eta_1, \ldots, \eta_p)^\top$ are two sets unknown non-negative parameters representing node heterogeneity. Formulation of node heterogeneity in this manner was adopted in the degree-corrected stochastic block models 
\citep{karrer2011stochastic, ji2022co}, and also some dynamic network models \citep{jiang2025two, chang2024autoregressive}.
The effect of the observed traits is represented in the form of linear regression with  unknown coefficient vectors
$\bbeta$ and $\bnu$. Although Model \eqref{T0} theoretically requires the parameters to reside in a constrained feasible set to ensure
$A_{i,j}, B_{i,j}\in (0,1)$, our estimation procedure utilizes unconstrained optimization as in the degree-corrected stochastic block models and the random dot product models \citep{athreya2018statistical}. We justify this 
choice through our uniform convergence results (see Theorem 3), which provides theoretical guarantees for the estimated probabilities to fall  within the interval
$[0,1]$ with probability approaching 1 as $n,p\rightarrow \infty$.   
 We impose the restriction $\|\bbeta\|_2 = \|\bnu\|_2 = 1$ to make $\btheta$ and $\bfeta$ identifiable. Further we model the latent homophily effect through two quadratic forms of latent traits $\bU_{i,1}$ and $\bU_{i,2}$, similar to the eigenmodel of \cite{hoff2008},
where $\bLambda_1$ and $\bLambda_2$ are two unknown diagonal matrices. We require 
$\|\bU_{i,1}\|= \|\bU_{i,2}\|=1$.
Then for $k=1$ or $2$, similar values of components $\bU_{i,k}$ and $\bU_{j,k}$ 
contribute positively or negatively to the transition probability between nodes $i$ and $j$, depending on the corresponding diagonal element of $\bLambda_k$ being positive or negative. Note that \cite{hoff2008} adopted the eigenmodel to define a random event of existing edge or not for a static network, and the estimation was carried out using computationally intensive methods such as MCMC. In contrast, based on the proposed normalized squared loss  all $\bU_{i,k}, d_k $ and $\bLambda_k$ $(k=1,2)$ in our model can be estimated efficiently by a penalized procedure. See Section \ref{latentG} below.

\section{Estimation}\label{sec: parameter estimation}

With available observations $\bX_1, \cdots, \bX_n$, we develop some new methods for estimating parameters $\btheta, \bfeta, \bbeta, \bnu, \bLambda_1, \bLambda_2$ as well as  latent traits $\bU_1, \cdots, \bU_p$ in model \eqref{T0}. The likelihood function is non-convex, and the maximum likelihood estimation is a high-dimensional nonlinear optimization problem with complicated constrains on the parameters (due to the constraint $A_{i,j}, B_{i,j} \in (0, 1)$). To ensure the estimation is statistically sound and numerically stable, we introduce a new normalized squared loss function. To highlight the key idea, we first consider in Section \ref{OT_homophily} a simpler case when the latent traits $\{ \bU_i, i\in [p]\}$ are absent. In this case, the proposed normalized squared loss is convex. 
The estimation for the full model \eqref{T0} will be developed in Section \ref{latentG}.
It turns out to be a biconvex optimization problem based on the normalized squared loss, and this optimization problem can be solved by a simple iterative algorithm.

\subsection{A model without latent homophily traits}\label{OT_homophily}

We first consider a reduced form of \eqref{T0} in absence of latent traits $\bU_{i,k}$:
\begin{align}
	\label{homophilyF2}
	A_{i,j}=P(X_{i,j}^t=1|X_{i,j}^{t-1}=0) = \theta_i \theta_j(\bZ_{i,j}^{\top}\bbeta+1), \\ \nonumber
	B_{i,j} =P(X_{i,j}^t=0|X_{i,j}^{t-1}=1) = \eta_i \eta_j(\bZ_{i,j}^{\top}\bnu+1).
\end{align}
Note that a different parametrization is adopted here: by inserting 1 on the RHS of equations, we do not require $\bbeta$ and $\bnu$ to be constrained to unit vectors. 

As the likelihood function is non-convex, a natural alternative is the method of moments estimation, i.e. to search for the values of $\btheta, \bfeta, \bbeta$ and $\bnu$ such that the RHS of the two equations in \eqref{homophilyF2} match some appropriate initial estimates $\wh A_{i,j}$ and $\wh B_{i,j}$ respectively. The obvious choice for the initial estimators are the relative frequencies:
\begin{equation}\label{MLE0}
	\hat{A}_{i,j}
	=
	\frac{\sum_{t=2}^n X_{i,j}^t(1-X_{i,j}^{t-1})}
	{\sum_{t=2}^n (1-X_{i,j}^{t-1})},
	\qquad
	\hat{B}_{i,j}
	=
	\frac{\sum_{t=2}^n (1-X_{i,j}^t)X_{i,j}^{t-1}}
	{\sum_{t=2}^n X_{i,j}^{t-1}}.
\end{equation}
See \cite{ANJiang}. We adopt the convention $0/0=1$ in the above expressions.  Below we only proceed to the estimation for $\btheta, \bbeta$ explicitly, as that for $\bfeta, \bnu$ follows the similar lines.

With properly specified initial estimator
$\wh A_{i,j}$, we may seek for the values
of $\btheta$ and $\bbeta$ to minimize
\begin{equation}\label{sqloss}
	\frac{1}{p}\sum_{1\le i< j\le p}
	\left\{
	\hat{A}_{i,j}-\theta_i\theta_j\left(\bZ_{i,j}^{\top}\bbeta+1\right)
	\right\}^2.
\end{equation}
However, this is not a convex function of  $(\btheta,\bbeta)$. In particular, when $p$ is large in relation to $n$, small perturbations in $\btheta$ may lead to substantial changes in the value of \eqref{sqloss}, making the minimization  difficult and potentially unstable.
Note that
the squared loss functions are often used to estimate latent homophily in static network models, such as the random dot product model \citep{athreya2018statistical, xie2023efficient} and the signal-plus-noise matrix model \citep{xie2022eigenvector,xie2024eigenvector}. However the direct use (\ref{sqloss}) is not appealing due to its non-convexity. 

To overcome the difficulties due to the non-convexity, we normalize the square loss \eqref{sqloss} as follows:
\begin{eqnarray}\label{loss_theta}
	L\left(\btheta,\bbeta \mid \hat{\bA}\right)
	&=&
	\frac{1}{p}\sum_{1\le i< j\le p}
	\left(
	\frac{\hat{A}_{i,j}}{\sqrt{\theta_i\theta_j}}
	-
\sqrt{\theta_i\theta_j}\left(\bZ_{i,j}^{\top}\bbeta+1\right)
	\right)^2 \nonumber\\
	&=&
	\frac{1}{p}\sum_{1\le i< j\le p}
	\frac{1}{\theta_i\theta_j}
	\left\{
	\hat{A}_{i,j}-\theta_i\theta_j\left(\bZ_{i,j}^{\top}\bbeta+1\right)
	\right\}^2,
\end{eqnarray}
where $\wh \bA= (\wh A_{i,j})$ is a $p\times p$ matrix with $\wh A_{i,i}\equiv 0$.
We argue that this normalization may be viewed as normalizing initial estimator $\wh A_{i,j}$ by its approximate standard error: 
Given $X_{i,j}^{t-1}=0$, $\pi \equiv A_{i,j} $ is probability $X_{i,j}^t =1$, i.e. the probability in a random trial with binary outcomes. Hence the relative frequency estimator for $\pi$ has the variance 
$
\pi(1-\pi) = A_{i,j}(1-A_{i,j}) \approx A_{i,j} $
when $A_{i,j} $ is small, which is true for most large and sparse networks. Therefore
we may take $ \sqrt{\theta_i\theta_j} \propto \sqrt{A_{i,j}}$ (see \eqref{homophilyF2}) as an approximate for the standard error of $\wh A_{i,j}$.  See also Proposition 7 in \cite{ANJiang} for the asymptotic variance of the initial estimators defined in \eqref{MLE0}.

This feature also makes the proposed loss more suitable for link prediction, since each term in the sum in \eqref{loss_theta} is rescaled according to its approximate variance. Note that  loss function $L(\btheta, \bbeta|\wh \bA)$ is different from classical weighted least squares, as the weights themselves are unknown parameters.
Now we introduce the parameter transformations:
\begin{equation*}
f_i=\frac{1}{2}\log(\theta_i), \qquad
\end{equation*}
and put $\f=(f_1,\ldots,f_p)^\top$. 
Then the loss function
\begin{equation}
    \label{loss_sample}
	l\left(\f,\bbeta \mid \hat{\bA}\right)
	\;\equiv\;L\left(\btheta,\bbeta \mid \hat{\bA}\right) \;=\;
	\frac{1}{p}\sum_{1\le i< j\le p}
	\left\{
	\hat{A}_{i,j}e^{-f_i-f_j}
	-
e^{f_i+f_j}\left(\bZ_{i,j}^{\top}\bbeta+1\right)
	\right\}^2
\end{equation}
is convex in $(\f, \bbeta)$; see Proposition \ref{Hessian_inf} below. We introduce a regularity condition first.

\begin{itemize}
	\item[(C1)] There exist positive constants $c_{1}$, $c_{2}$ and $c_{3}$ such that for all $\|\bx\|_{2}=1$,
	\begin{align*}
		c_{1}\leq \bx^{\top}\left\{ \frac{1}{p^2}\sum_{1\leq i < j\leq p}\left( \bZ_{i,j}\bZ_{i,j}^{\top}\right) \right\} \bx\leq c_{2},
		\quad \|\bZ_{i,j} \|_\infty\leq c_{3}.
	\end{align*}  
\end{itemize}

    \begin{proposition}\label{Hessian_inf}
    Let condition (C1) hold. For  any given $p\times p$ matrix $\bA$, the loss function $l\left(\f,\bbeta|\bA\right)$ is convex in $(\f, \bbeta)$. Moreover, there exists a constant $c> 0$   independent with $\bA$ such that for any $\ba\in \mathbb{R}^{p}$, $\bb\in \mathbb{R}^{q}$, 
	\begin{align*}
		(\ba^\top,\bb^{\top} )\nabla^{2}l\left(\f,\bbeta|\bA\right)
        (\ba^\top,\bb^\top)^\top
        \geq c  
       \min_{1\le i<j\le p}A_{i,j}^2 
        \left( \|\ba\|_{2}^2+p \| \bb\|_{2}^2\right).
	\end{align*}
\end{proposition}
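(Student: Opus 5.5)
The plan is to reduce the Hessian of $l(\f,\bbeta\mid\bA)$ to a sum of explicit $2\times2$ quadratic forms, one per pair $(i,j)$, and then to sum these forms using condition (C1). Fix a pair $i<j$ and write
\[
s=f_i+f_j,\quad w=\bZ_{i,j}^\top\bbeta+1,\quad u=A_{i,j}e^{-s},\quad v=e^{s}w,\quad g=u-v .
\]
For a direction $(\ba,\bb)$ put $\alpha=a_i+a_j$ and $x=e^{s}\bZ_{i,j}^\top\bb$. Since $g$ depends on $\f$ only through $s$, we have $g_s=-(u+v)$, $g_{ss}=g$, $\nabla_\bbeta g=-e^s\bZ_{i,j}$, $\nabla_\bbeta g_s=-e^s\bZ_{i,j}$ and $\nabla^2_\bbeta g=0$. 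The second directional derivative of $g^2$ is $2(\partial g)^2+2g\,\partial^2 g$. Expanding it gives
\begin{equation*}
Q_{i,j}=4\alpha^2(u^2+v^2)+8v\alpha x+2x^2
=4u^2\alpha^2+2(x+2v\alpha)^2-4v^2\alpha^2 .
\end{equation*}
Hence $(\ba^\top,\bb^\top)\nabla^2 l\,(\ba^\top,\bb^\top)^\top=p^{-1}\sum_{i<j}Q_{i,j}$.

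The key step is to bound each $Q_{i,j}$ below by $c\,(u^2\alpha^2+x^2)$. The associated $2\times2$ matrix is $\begin{pmatrix}4(u^2+v^2)&4v\\4v&2\end{pmatrix}$, whose determinant is $8(u^2-v^2)$. So this lower bound, and even positive semidefiniteness, holds exactly when $|v|\le(1-\delta)u$, i.e. $e^{2s}|w|\le(1-\delta)A_{i,j}$.

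This is the main obstacle, and I would confront it before anything else. For $p=2$, take $|w|$ large and the direction $x=-2v\alpha$; then $Q_{i,j}=4\alpha^2(u^2-v^2)<0$. Thus convexity at every $(\f,\bbeta)$ and for every $\bA$ cannot hold as literally worded. My first attempt would be to see whether the authors intend the Hessian at the population point, where $\bA$ is the true transition matrix, so that $v\approx u$ up to the factor $\theta_i\theta_j$. I would also look for a reparametrisation $\bbeta\mapsto$ something absorbing $e^{s}w$ that removes the cross term $8v\alpha x$. If neither works, I would state the result on the sublevel region where $|v|\le(1-\delta)u$. On that region $Q_{i,j}\ge c_\delta(u^2\alpha^2+x^2)$ by the eigenvalue bound for the $2\times2$ matrix.

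Given the per-pair bound, the summation is routine. First, $u^2\ge e^{-2s}\min_{i<j}A_{i,j}^2$, and $e^{-2s}$ is bounded below once $\|\f\|_\infty$ is bounded; this is where the factor $\min A_{i,j}^2$ enters. Second, $\sum_{i<j}(a_i+a_j)^2=(p-2)\|\ba\|_2^2+(\sum_i a_i)^2\ge(p-2)\|\ba\|_2^2$. Third, $x^2\ge e^{2s}(\bZ_{i,j}^\top\bb)^2$, and (C1) gives $\sum_{i<j}(\bZ_{i,j}^\top\bb)^2\ge c_1p^2\|\bb\|_2^2$. Dividing by $p$ yields
\[
p^{-1}\sum_{i<j}Q_{i,j}\gtrsim \min_{i<j}A_{i,j}^2\,\|\ba\|_2^2+p\|\bb\|_2^2 ,
\]
and the second term dominates $\min A_{i,j}^2\,p\|\bb\|_2^2$ whenever $A_{i,j}\le1$. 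Convexity then follows from nonnegativity of the quadratic form on the same region.
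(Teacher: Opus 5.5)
Your Hessian computation is correct, and so is your main conclusion: the proposition is false as worded, so no proof of it can exist. The failure is not a small-$p$ artefact or a corner case. At $\f=\mathbf 0$, $\bbeta=\mathbf 0$ one has $v_{i,j}=1>u_{i,j}=A_{i,j}$ as soon as $A_{i,j}<1$. Now take any $p$, with $q=1$ and $Z_{i,j}=z_i+z_j$ where $z_i\in[1,2]$, so that (C1) holds. The direction $\ba=\mathbf z$, $\bb=-2$ gives $x_{i,j}=-2v_{i,j}\alpha_{i,j}$, hence $Q_{i,j}=4Z_{i,j}^2(A_{i,j}^2-1)<0$ for every pair.

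The paper reaches the opposite conclusion only through an algebra slip. The relevant argument is the proof of Proposition \ref{Hessian_inf_H}(i), to which Proposition \ref{Hessian_inf} is reduced by setting the latent term to $1$. There the quadratic form is rewritten as the sum of squares
$\frac1p\sum_{i\neq j}\{e^{2(f_i+f_j)}[\bZ_{i,j}^\top\bb+(\bZ_{i,j}^\top\bbeta+H_{i,j})(a_i+a_j)]^2+A_{i,j}^2e^{-2(f_i+f_j)}(a_i+a_j)^2\}$.
This works only because the $\bbeta\bbeta^\top$ block is twice too large. For $l_1=\frac{1}{4p}\sum_{i\neq j}(\cdot)^2$ the correct block is $\frac{1}{2p}\sum_{i\neq j}e^{2(f_i+f_j)}\bZ_{i,j}\bZ_{i,j}^\top$, not $\frac1p\sum_{i\neq j}$. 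The $\bbeta$-gradient carries the same factor-$2$ error, while the $\f\f^\top$ and $\f\bbeta^\top$ blocks are right. With the correct block, each ordered pair contributes $\frac1p\{u^2\alpha^2+\frac12(x+2v\alpha)^2-v^2\alpha^2\}=Q_{i,j}/(4p)$. This is exactly your expression, and the completion of squares no longer gives a nonnegative form. So you have found a genuine error in the paper, not a gap in your own reasoning.

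Where your proposal is weak is the salvage. On $\{|v_{i,j}|\le(1-\delta)u_{i,j}\ \forall i<j\}$ your per-pair eigenvalue bound is true, but there are two problems.
\begin{itemize}
\item[(a)] This set is not convex in $(\f,\bbeta)$; already $\{(s,w):e^{2s}|w|\le c\}$ is not. A positive Hessian there therefore gives convexity only on convex subsets.
\item[(b)] More seriously, the set contains no point where the proposition is actually used. At the population point $A_{i,j}=e^{2(f_i+f_j)}w_{i,j}$, so $u_{i,j}=v_{i,j}$ exactly for every pair, not merely up to a factor $\theta_i\theta_j$. Your region therefore excludes it. With $\wh{\bA}$ in place of $\bA$, the ratio $u_{i,j}/v_{i,j}=\wh{A}_{i,j}/A_{i,j}$ falls on both sides of $1$.
\end{itemize}
Consequently, the quadratic-growth bound for $l_E$ around $(\f^*,\bbeta^*)$ in the proof of Theorem \ref{thm1} cannot be drawn from your restricted statement.

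Your first instinct, to work at the population point, is the right one, but it cannot be carried out pair by pair. There $Q_{i,j}=2(x_{i,j}+2v_{i,j}\alpha_{i,j})^2$ has rank one. Positivity must therefore come from aggregation over pairs, which needs an identifiability condition beyond (C1): at the truth, $\frac1p\sum_{i<j}e^{2(f_i+f_j)}\{\bZ_{i,j}^\top\bb+2w_{i,j}(a_i+a_j)\}^2\gtrsim\|\ba\|_2^2+p\|\bb\|_2^2$. (C1) does not imply this. If $\bZ_{i,j}$ contains an intercept, then $\ba=t\mathbf 1$ and $\bb=-4t(\bbeta+\mathbf e_1)$ annihilate every term. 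A correct replacement for the proposition is a local strong-convexity statement of this type near $(\f^*,\bbeta^*)$, with the indefinite term $2g\,\partial^2 g$ controlled by the size of the residuals $g_{i,j}$.
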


\subsection{Estimation for model \eqref{T0}}
\label{latentG}

For the full model \eqref{T0}, we apply the same idea as in Section \ref{OT_homophily} to construct a normalized squared loss for the estimation. Unfortunately the normalized squared loss function is not entirely convex. Nevertheless its so-called bi-convexity facilitates a simple iterative algorithm which leads to numerically stable estimation. Furthermore, our estimation for latent traits are based on a direct matrix eigenanalysis, free from more computationally intensive methods such as MCMC, which are the standard practice in estimating latent homophily effects \citep{hoff2008}.
We only present the estimation for $\btheta, \bbeta, \bU_{i,1}, \bLambda_1$, and that for $\bfeta, \bnu, \bU_{i,2}, \bLambda_2$ can be proceed in the same manner.
Let $f_i = (\log \theta_i)/2$, and  $\bH_1 =(H_{i,j}^{(1)})$ be a $p\times p$ matrix with $H_{i,j}^{(1)} = \bU_{i,1}^\top \bLambda_1 \bU_{j,1}$. 
Put $\bZ_{i,i}={\bf 0}$ for $i=1, \ldots, p$. Following \eqref{T0}, the diagonal elements of $\bA$ are correspondingly defined as $A_{i,i}= \theta_i^2H_{i,i}^{(1)}$, while their estimates are set to $\hat{A}_{i,i}=0$. Similar to \eqref{loss_sample}, we define the normalized loss function as follows:

\begin{eqnarray} \label{loss_1}
l\left(\f,\bbeta,\bH_1| \wh \bA \right) 
&=& \frac{1}{2p}\sum_{1\leq i, j\leq p} \left\{ \hat{A}_{i,j}e^{-f_{i}-f_{j}}-e^{f_{i}+f_{j}} \left( \bZ_{i,j}^{\top} \bbeta  + H^{(1)}_{i,j}\right)\right\} ^2  \\
&=&\frac{1}{2p}\| \bD_f^{-1}\bA \bD_f^{-1}
- \bD_f \bZ_\beta \bD_f -\bD_f \bH_1 \bD_f\|_F^2, \nonumber 
\end{eqnarray}
where $\bD_f= \diag( e^{f_1}, \cdots, e^{f_p})$ and $\bZ_\beta=(\bZ_{i,j}^{\top} \bbeta)_{1\leq i,j\leq p}$.  

The proposition below shows that this loss function is of the so-called biconvexity which paves the way for an efficient and stable algorithm to evaluate its minimizer which will be taken as our estimator.

\begin{proposition}\label{Hessian_inf_H}
	Let condition (C1) hold.
    
    (i)  For any $p\times p$ matrices $\bA$ and $\bH=(H_{i,j})_{1\le i, j\le p}$ with $\|\bH \|_{\infty} <\infty$ and $H_{i,i}=0, i=1, \ldots, p$, the function 
  \begin{equation}
      \label{l_1}
 l_1(\f, \bbeta|\bA, \bH) 
    \equiv l(\f, \bbeta, \bH|\bA)
    \end{equation}
    is convex in $(\f, \bbeta)$, where
    $l(\cdot)$ is defined as in \eqref{loss_1}.
    More precisely,
    there exists a constant $C>0$  independent of $\bA$, such that for all $\ba\in \mathbb{R}^{p}$ and $\bb\in \mathbb{R}^{q}$,  
	\begin{align*}
		(\ba^\top,\bb^{\top}) \nabla^{2}l_1\left(\f,\bbeta|\bA,\bH\right)(\ba^\top,\bb^\top)^\top \geq C\min_{1\le i <j \le p}
        A_{i,j}^2  
        \left( \|\ba\|_{2}^2+p \| \bb\|_{2}^2\right).
	\end{align*}

    (ii) For any given  constant $\lambda>0$, $\f$, $\bbeta$, and symmetric matrix $\bA=(A_{i,j})$ with $A_{i,i}=0$, 
    \begin{align}
        \label{H}
    &{\arg\min}_{\bH} \big\{l(\f, \bbeta, \bH|\bA) + \lambda \|\bD_f \bH \bD_f\|_*\big\}\\ \nonumber
    = \; &\bD_f ^{-1} \bGamma \; \diag\big\{ 
   {\rm sgn}(e_1) (|e_1|-p\lambda)_+, \;\cdots,\; {\rm sgn}(e_{p})(|e_{p}|-p\lambda)_+
    \big\}\;\bGamma^\top \bD_f^{-1},
    \end{align}
where $x_+ = \max(0, x)$, 
and $\bGamma$ and $e_i$ are taken from the spectral decomposition of symmetric matrix  $\bD_f^{-1}\bA\bD_f^{-1} - \bD_f\bZ_\beta \bD_f = \bGamma\diag (e_1, \cdots, e_{p}) \bGamma^\top$ in which the eigenvalues are arranged in the order $|e_1| \ge \cdots \ge |e_{p}|.$
\end{proposition}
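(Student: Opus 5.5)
For part (i) we will mirror the argument for Proposition \ref{Hessian_inf}, treating $\bH$ as a fixed matrix. With $\bH$ held fixed, $H_{i,j}$ enters each summand exactly the way the constant $1$ enters \eqref{loss_sample}. Write each summand as $g_{i,j}(s,\bbeta)=\{A_{i,j}e^{-s}-e^{s}w_{i,j}(\bbeta)\}^2$, where $s=f_i+f_j$ and $w_{i,j}(\bbeta)=\bZ_{i,j}^\top\bbeta+H_{i,j}$. Put $u=A_{i,j}e^{-s}$ and $v=e^{s}w_{i,j}$.

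The first step is to compute the Hessian of $g_{i,j}$ in the variables $(s,\bbeta)$. The derivatives are
\[
\partial_s(u-v)=-(u+v),\qquad \partial_s^2(u-v)=u-v,\qquad \nabla_{\bbeta}(u-v)=-e^{s}\bZ_{i,j}.
\]
Apply these along the direction $(\alpha,\bb)$, where $\alpha=a_i+a_j$. The quadratic form of $\tfrac12\nabla^2 g_{i,j}$ then equals
\[
\{\alpha(u+v)+e^{s}\bZ_{i,j}^\top\bb\}^2+(u-v)\{\alpha^2(u-v)+2\alpha e^{s}\bZ_{i,j}^\top\bb\}.
\]
Expanding, this simplifies to
\[
2\alpha^2(u^2+v^2)+4\alpha u\,e^{s}\bZ_{i,j}^\top\bb+e^{2s}(\bZ_{i,j}^\top\bb)^2 .
\]
Since $ue^{s}=A_{i,j}$, the expression becomes
\[
2\alpha^2v^2+\bigl(2\alpha^2u^2+4\alpha A_{i,j}\bZ_{i,j}^\top\bb+e^{2s}(\bZ_{i,j}^\top\bb)^2\bigr).
\]
The bracket is a quadratic in $(\alpha u,\ e^{s}\bZ_{i,j}^\top\bb)$ with matrix $\begin{pmatrix}2&2\\2&1\end{pmatrix}$, which is indefinite. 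So we must exploit $v^2$, or equivalently rewrite in terms of the variables $(u,v)$, to recover positivity.

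Alternatively, we reorganize as $\{\alpha(u+v)+e^s\bZ^\top\bb\}^2+\alpha^2(u-v)^2+2\alpha(u-v)e^s\bZ^\top\bb$ and complete squares. This gives
\[
\tfrac12\{\alpha(u+v)+e^s\bZ^\top\bb\}^2+\tfrac12\{\alpha(u+v)+e^s\bZ^\top\bb+2\alpha(u-v)\}^2 \ \text{ minus cross terms}.
\]
We expect the identity used for Proposition \ref{Hessian_inf} to carry over verbatim. We will invoke that computation, since the only change is replacing $1$ by $H_{i,j}$, and $H_{i,j}$ is bounded, which is the only property used.

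The key step is then the lower bound. We show the per-pair form is at least $c\,A_{i,j}^2(\alpha^2+(\bZ_{i,j}^\top\bb)^2)$. The factor $A_{i,j}^2$ appears because $u^2e^{2s}=A_{i,j}^2$, which lets us scale out $e^{s}$ by an AM–GM argument on $u^2+v^2$. Summing over pairs, $\frac1p\sum_{i<j}(a_i+a_j)^2$ is bounded below by $c\|\ba\|_2^2$, because the matrix $\frac1p\sum_{i<j}(e_i+e_j)(e_i+e_j)^\top=\frac1p\{(p-2)\bI_p+\b1_p\b1_p^\top\}$. Separately, $\frac1p\sum_{i<j}(\bZ_{i,j}^\top\bb)^2\ge c_1p\|\bb\|_2^2$ by (C1). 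Cross terms between $\alpha$ and $\bZ^\top\bb$ are absorbed by choosing constants appropriately. Obtaining this uniform lower bound, free of $e^{s}$ and valid for all $(\ba,\bb)$, is the main obstacle; I would first reproduce exactly the completion-of-squares from Proposition \ref{Hessian_inf}. The diagonal terms $i=j$ contribute nonnegatively once convexity per term is established, and the symmetric double sum equals twice the $i<j$ sum.

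For part (ii) the plan is as follows. With $\f$ and $\bbeta$ fixed, set $\bM=\bD_f^{-1}\bA\bD_f^{-1}-\bD_f\bZ_\beta\bD_f$ and $\bG=\bD_f\bH\bD_f$; since $\bD_f$ is invertible, $\bH\mapsto\bG$ is a bijection. The objective becomes $\frac{1}{2p}\|\bM-\bG\|_F^2+\lambda\|\bG\|_*$, that is, $p$ times the objective $\tfrac12\|\bM-\bG\|_F^2+p\lambda\|\bG\|_*$. This is the proximal problem for the nuclear norm. Its unique minimizer is singular value thresholding at level $p\lambda$. Because $\bM$ is symmetric, its SVD comes from its spectral decomposition, with singular values $|e_k|$ and signs absorbed, which yields $\bGamma\diag\{{\rm sgn}(e_k)(|e_k|-p\lambda)_+\}\bGamma^\top$. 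Mapping back via $\bH=\bD_f^{-1}\bG\bD_f^{-1}$ gives \eqref{H}.
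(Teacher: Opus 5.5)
Your part (ii) is correct and is the paper's argument. You substitute $\bL=\bD_f\bH\bD_f$ and soft-threshold the eigenvalues of the symmetric matrix $\bD_f^{-1}\bA\bD_f^{-1}-\bD_f\bZ_\beta\bD_f$ at level $p\lambda$. One small slip: the objective is $1/p$ times, not $p$ times, $\tfrac12\|\bM-\bG\|_F^2+p\lambda\|\bG\|_*$, which does not change the minimizer.

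Part (i) has a genuine gap at exactly the step you call the main obstacle, and that step cannot be completed.

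\begin{itemize}
\item Deferring to ``the identity used for Proposition \ref{Hessian_inf}'' is circular. The paper proves that proposition by the same computation, with $H_{i,j}$ replaced by $1$.
\item Your mixed term has a sign error. Since $\nabla_{\bbeta}\partial_s(u-v)=-e^{s}\bZ_{i,j}$, the second term is $(u-v)\{\alpha^2(u-v)-2\alpha y\}$, where $y=e^{s}\bZ_{i,j}^\top\bb$.
\item The per-pair form is therefore $2\alpha^2(u^2+v^2)+4\alpha v y+y^2$, with $v$ rather than $u$ in the cross term. Its $2\times 2$ matrix in $(\alpha,y)$ has determinant $2(u^2-v^2)$. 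It is positive semidefinite only where $|A_{i,j}|\ge e^{2(f_i+f_j)}|\bZ_{i,j}^\top\bbeta+H_{i,j}|$.
\item Summing over pairs does not rescue it. Take $q=1$, $\bZ_{i,j}\equiv 1$, $\bH=\mathbf{0}$, and $A_{i,j}\equiv 1$ for $i\ne j$. At $(\f,\beta)=(\mathbf{0},2)$, in the direction $(\ba,b)=(\mathbf{1}_p,-8)$, every summand is $\{e^{-2t}-e^{2t}(2-8t)\}^2$. Its second derivative at $t=0$ is $-48$. So $l_1$ is not convex there, even though (C1) holds.
\item Even at a zero-residual point the strict bound fails when $\bH=\mathbf{0}$. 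There $l_1(\f+t\mathbf{1}_p,e^{-4t}\bbeta)=e^{-4t}l_1(\f,\bbeta)$, so the Hessian annihilates the direction $(\mathbf{1}_p,-4\bbeta)$.
\end{itemize}

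The paper's proof follows your plan: it fixes $\bH$, expands the Hessian, completes the square, and then uses $(p-2)\bI_p+\mathbf{1}_p\mathbf{1}_p^\top$ and (C1). It arrives at the perfect square $\frac1p\sum_{i\ne j}[e^{2(f_i+f_j)}\{\bZ_{i,j}^\top\bb+(\bZ_{i,j}^\top\bbeta+H_{i,j})(a_i+a_j)\}^2+A_{i,j}^2e^{-2(f_i+f_j)}(a_i+a_j)^2]$.

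That perfect square appears only because the proof states $\partial^2 l_1/\partial\bbeta\partial\bbeta^\top=\frac1p\sum_{i\ne j}e^{2(f_i+f_j)}\bZ_{i,j}\bZ_{i,j}^\top$. This is twice what its own normalization $l_1=\frac{1}{4p}\sum_{i\ne j}(\cdot)^2$ gives; its $\f$-block and mixed block are correct. With the correct block, an extra term $-\frac{1}{2p}\sum_{i\ne j}e^{2(f_i+f_j)}(\bZ_{i,j}^\top\bb)^2$ remains. This is precisely the indefiniteness you detected.

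So you should not expect to finish (i) as stated. What your determinant computation does prove is a restricted version. On a region such as $|A_{i,j}|\ge(1+\delta)e^{2(f_i+f_j)}|\bZ_{i,j}^\top\bbeta+H_{i,j}|$ for all pairs, the per-pair matrix is uniformly positive definite. Summing then gives a lower bound $c(\min_{i\ne j}A_{i,j}^2\|\ba\|_2^2+p\|\bb\|_2^2)$, with $c$ depending on $\delta$, $\|\f\|_\infty$ and the constants in (C1). Any correct statement of (i) needs a restriction of this kind.
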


Assertion (ii) in the above proposition follows from the fact that, by substituting 
$\bL = \bD_f \bH \bD_f$, the minimization problem reduces to the following convex, nuclear norm penalized Frobenius projection:  
\begin{eqnarray*}
    l_2(\bL|\bA, \f, \bbeta) &\equiv&
l(\f, \bbeta, \bH|\bA)  + \lambda \|\bD_f \bH \bD_f\|_*  \\
&=&
\frac{1}{2p}\| \bD_f^{-1}\bA \bD_f^{-1}
- \bD_f \bZ_\beta \bD_f -\bL\|_F^2
+ \lambda\|\bL\|_*
\end{eqnarray*}
 Minimizing this convex function corresponds to evaluating the proximal operator of the nuclear norm, i.e., applying soft-thresholding to the singular values of $\bD_f^{-1}\bA\bD_f^{-1} - \bD_f\bZ_\beta \bD_f$.

Now we are ready to spell out the algorithm for computing the estimates for $\f, \bbeta$
and~$\bH_1$.

\begin{quote}
{\bf Algorithm for computing estimates $(\wh \f, \, \wh\bbeta, \, \wh\bH_1)$ }

{\bf Input:} initial estimate $\hat{\bA}=(\wh A_{i,j})$ is a $p\times p$ symmetric matrix with $\wh A_{i,i}=0$ , $q\times 1$ covariates $\bZ_{i,j}$ for $1\le i<j\le p$, a regularization parameter $\lambda>0$, a small constant $\pi >0$ which controls the convergence, and $\bH^{(0)}$ is an initial estimate for $\bH_1$ and is $p\times p$ and symmetric with diagonal elements set to be 0.

\begin{enumerate}
    \item By using a gradient descent method,
    find the minimizer
    \begin{equation}
        \label{step1}
    (\wt \f, \wt \bbeta)
    = \arg \min_{\f, \bbeta} \, l_1(\f, \bbeta|
    \wh \bA, \bH^{(0)}),
    \end{equation}
    where $l_1$ is defined as in \eqref{l_1}.

    \item Compute estimate $\wt \bH$ by 
    \eqref{H} with $(\f, \bbeta, \bA)$ replaced by $(\wt\f, \wt\bbeta, \wh\bA)$.

    \item Set $H^{(0)}= \wt \bH$, repeat Steps 1 and 2 above until the two successive values of $\wt \f$ are highly correlated with the sample correlation coefficient greater than $1-\pi$, and the two
    successive values of $\wt \bbeta$ are highly correlated with the sample correlation coefficient greater than $1-\pi$.
    \item Compute
    \[
    \hat{\bbeta}= \frac{\wt \bbeta}{\|\wt \bbeta\|_{2}} ,\quad \hat{\f}= \wt \f + \frac{\log(\|\wt \bbeta\|_{2})}{4},\quad \hat{\bH}_{1}=\frac{\wt \bH }{\|\wt \bbeta\|_{2}}.
    \]
\end{enumerate}
{\bf Output:}
	  $\hat{\f}$, $\hat{\bbeta}$ and $\hat{\bH}_{1}$.
\end{quote}

Some remarks are now in order.

\begin{remark}
    \label{algorithm-remark}
    (i) We start the algorithm with an initial estimate $\bH^{(0)}$ for $\bH_1$, as the choice for $\bH^{(0)}$ is relatively easy, see Section \ref{ini} below.
    Especially when  $\|\bH_1\|_{\mathrm{F}}$ is small in relation to $p$, we can simply set $\bH^{(0)}=\bf0$.

   (ii) Tuning parameter $\lambda>0$ controls the rank of $\wt \bH$, to prevent the overfitting (i.e. when $\lambda =0$). It is easy to see from \eqref{H} that if we set $\lambda \in [|e_{r+1}|/p, \; |e_r|/p)$, the resulting $\wt \bH$ has rank $r$, where $e_1, \cdots, e_{p}$ are the eigenvalues of
   $\bD^{-1}_{\wt\f} \wh \bA \bD^{-1}_{\wt \f} - \bD_{\wt \f} \bZ_{\wt \bbeta} \bD_{\wt \f}$ arranged in descending order according to their absolute values. 
   See also Theorem \ref{thm2} in Section \ref{sec:theory} below. 

   (iii) With a reasonably good initial estimates $\wh \bA$ (such as \eqref{MLE0}) and $\bH^{(0)}$ (see (i) above), one iteration in Step 1 leads a consistent estimator for $(\f, \bbeta)$, and further iteration in Step 2 leads a consistent estimator for $\bH_1$. This high efficiency renders the method viable for massive graphs. See Theorems \ref{thm1} and \ref{thm2} below.
\end{remark}

\section{Theoretical analysis}\label{sec:theory}

Further to Section \ref{sec: parameter estimation}, we only state the results for the
estimators of $\f, \bbeta$ and $\bH_1$ explicitly. 
 
\subsection{Consistency of the two-step estimator}\label{2step}  
 
 We introduce some regularity condition first.
Put
\[
e_{n} = \max_{1\le i<j\le p}|\hat{A}_{i,j}- A_{i,j}|,\quad  \sigma_{n}^2 = \max_{1\le i < j \le p} \var\left( \hat{A}_{i,j}\right),
\quad
D_{n}= \frac{\log(np)e_{n}}{p}+\sqrt{\frac{\log(np)}{p}}\sigma_{n} + \frac{\|\Delta_{n}\|_{1}}{p},        \]
where $
    \Delta_{n}$ as a $p\times p$ matrix with $|\E\left( \hat{A}_{i,j}- A_{i,j}\right)|$ as its $(i,j)$-th element for $i\ne j$, and the main diagonal elements equal to 0.
    
 \begin{itemize}
	\item[(C2)]  
    It holds that  $\|\f\|_{\infty } + \|\bH_1\|_{\infty } < c$, where $c>0$ is a constant independent of $p$.

\item[(C3)]
	Suppose $\hat{A}_{i,j}, 1 \le i < j \le p$, are independent. 
 Furthermore, $D_n \rightarrow 0$
	as { $np\rightarrow \infty$.}  
  
\end{itemize}

\begin{remark}
     
    Condition (C3)
     quantifies the accuracy of   the initial estimators $\hat{\bA}$. 
     \cite{ANJiang} shows that the estimates
     defined in \eqref{MLE0} satisfies 
    Condition (C3).
 
\end{remark}

	\begin{theorem}\label{thm1}
        Let Conditions (C1)-(C3) hold. For any given initial estimate $\bH^{(0)}$ with zero diagonal entries, let
       $(\wt \f, \wt \bbeta)$ be defined in \eqref{step1}. Then the two inequalities below hold with probability $1 - (np)^{-c}$ for all  $(n,p)$ such that $np$ is sufficiently large:
		\begin{eqnarray*}
			\frac{1}{\sqrt{p}}\| \left( \wt{\f},\sqrt{p}\wt{\bbeta}\right) -\left(\f, \sqrt{p}\bbeta\right) \|_{2}
		      & \leq & C D_n +  
            C\min\left\{\frac{\left\| \bH^{(0)}- \bH_1 \right\|_{1}}{p},\frac{\left\| \bH^{(0)}- \bH_1\right\|_{\mathrm{F}}}{p}\right\},\\ \nonumber
            \left\|\wt\f-{\f} \right\|_{\infty}&\leq &C\left( D_n + \frac{\left\| \bH^{(0)}- \bH_1 \right\|_{1}}{p}\right)
            ,
		\end{eqnarray*}
        where $C, c >0$ are some constants independent of $\bH^{(0)}$. Hence, both
        $\|\wt \bbeta - \bbeta\|_2$ and
        $\|\wt \f - \f\|_\infty$ converges to 0 in probability as long as $np\to \infty$ and also $\|\bH^{(0)}- \bH_1\|_1=o_P(p)$.
	\end{theorem}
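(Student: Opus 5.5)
The plan is a standard M-estimation argument for a convex loss: bound the gradient of $l_1(\cdot\,|\,\wh\bA,\bH^{(0)})$ at the true parameter $(\f,\bbeta)$, and pair it with local strong convexity from Proposition \ref{Hessian_inf_H}(i). This gives the $\ell_2$ bound. The $\ell_\infty$ bound then comes from a coordinatewise analysis of the first-order conditions. Throughout, write $w_{i,j}=e^{f_i+f_j}$; by (C2) we have $c^{-1}\le w_{i,j}\le c$. Write $g_{i,j}=\bZ_{i,j}^\top\bbeta+H^{(1)}_{i,j}$, so that $A_{i,j}=w_{i,j}^2g_{i,j}$.

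\emph{Step 1 (gradient at the truth).} At $(\f,\bbeta)$ the residual of the loss is
\[
r_{i,j}=(\wh A_{i,j}-A_{i,j})w_{i,j}^{-1}+w_{i,j}\bigl(H^{(1)}_{i,j}-H^{(0)}_{i,j}\bigr), \qquad i\neq j.
\]
The diagonal terms, with $\wh A_{i,i}=0$, contribute only $O(1/p)$ by (C2). The partial derivatives are
\[
\partial_{f_i}l_1=-\frac{2}{p}\sum_j r_{i,j}\bigl(\wh A_{i,j}w_{i,j}^{-1}+w_{i,j}(\bZ_{i,j}^\top\bbeta+H^{(0)}_{i,j})\bigr),
\qquad
\nabla_{\bbeta}l_1=-\frac1p\sum_{i,j}r_{i,j}w_{i,j}\bZ_{i,j}.
\]
In each of these I will replace $\wh A_{i,j}$ in the weight by $A_{i,j}+(\wh A_{i,j}-A_{i,j})$. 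This splits every term into three pieces:
\begin{enumerate}
\item[(a)] a sum of independent centred variables $\wh A_{i,j}-\E\wh A_{i,j}$ times bounded deterministic weights;
\item[(b)] a bias part bounded by the row sums of $\Delta_n$;
\item[(c)] an $\bH$-mismatch part bounded by $\|\bH^{(0)}-\bH_1\|_1/p$ per coordinate, or via Cauchy--Schwarz by $\|\bH^{(0)}-\bH_1\|_F/p$ in aggregate.
\end{enumerate}
There are also quadratic remainders $(\wh A_{i,j}-A_{i,j})^2$ and $(\wh A_{i,j}-A_{i,j})(\bH^{(0)}-\bH_1)_{i,j}$, which I will bound by $e_n$ times the linear terms. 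For piece (a), Bernstein's inequality with range $e_n$ and variance $\sigma_n^2$ gives, uniformly over $i$ with probability $1-(np)^{-c}$, the bound $\frac1p\{\log(np)e_n+\sqrt{p\log(np)}\sigma_n\}$. Combining the pieces yields
\[
\|\nabla_{\f}l_1\|_\infty\lesssim D_n+\|\bH^{(0)}-\bH_1\|_1/p,
\]
and $\|\nabla_{\bbeta}l_1\|_2$ is of the same order times $p$. Under the $(\ba,\sqrt p\,\bb)$ scaling, the rescaled gradient therefore has norm $\lesssim\sqrt p\,(D_n+\min\{\|\cdot\|_1,\|\cdot\|_F\}/p)$.

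\emph{Step 2 (localized strong convexity and the $\ell_2$ bound).} Proposition \ref{Hessian_inf_H}(i) involves $\min_{i,j}\wh A_{i,j}^2$, which may vanish. I will therefore use the part of the Hessian coming from $e^{2(f_i+f_j)}(\bZ_{i,j}^\top\bbeta+H^{(0)}_{i,j})^2$ together with the cross terms. On the ball $\mathcal B=\{\|\f'-\f\|_\infty\le\delta,\ \|\bbeta'-\bbeta\|_2\le\delta\}$ these are bounded below by a constant multiple of the Hessian at the truth with $\wh\bA$ replaced by $\bA$, up to an error of order $e_n$ plus the gradient-size terms. This follows by rerunning the computation behind Proposition \ref{Hessian_inf_H}(i) together with (C1). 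Since $A_{i,j}\gtrsim1$ under (C2), this gives curvature $\kappa\gtrsim1$ in the norm $\|\ba\|_2^2+p\|\bb\|_2^2$.

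Convexity then lets me use the usual trick. On the boundary of an ellipsoid of radius $R\asymp\kappa^{-1}\times(\text{gradient})$, the loss exceeds its value at the truth, so the global minimizer $(\wt\f,\wt\bbeta)$ lies inside. Dividing by $\sqrt p$ gives the first inequality.

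\emph{Step 3 ($\ell_\infty$ bound), which I expect to be the main obstacle.} Here the $\ell_2$ rate must be upgraded to uniform control without any sparsity. For fixed $i$, the equation $\partial_{f_i}l_1(\wt\f,\wt\bbeta)=0$ is a scalar equation in $\wt f_i$. It is strictly monotone, with derivative $\gtrsim1$ in $\wt f_i$ once the $\wt f_j$ are bounded. I will expand it around $f_i$ with the other coordinates fixed at $\wt f_j$. The dependence on the $\wt f_j$ and on $\wt\bbeta$ enters only through averages $\frac1p\sum_j(\cdot)(\wt f_j-f_j)$ and $\frac1p\sum_j(\cdot)\bZ_{i,j}^\top(\wt\bbeta-\bbeta)$. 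By Cauchy--Schwarz, these are at most $\|\wt\f-\f\|_2/\sqrt p+\|\wt\bbeta-\bbeta\|_2$, which Step 2 already controls. The remaining terms are the $i$-th gradient coordinate from Step 1, bounded by $D_n+\|\bH^{(0)}-\bH_1\|_1/p$. To make this rigorous I need an a priori bound $\|\wt\f\|_\infty=O(1)$. I would obtain it by first showing, via the same monotone scalar equation and a crude argument, that no coordinate can escape a large fixed ball. This is the delicate point.

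The final consistency statement then follows directly from $D_n\to0$ and $\|\bH^{(0)}-\bH_1\|_1=o_P(p)$.
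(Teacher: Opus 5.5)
\textbf{Gap 1: localization (main issue).} Your Steps 2 and 3 are circular. Any curvature you use, whether Proposition \ref{Hessian_inf_H}(i) or a perturbation of it, is uniform only where $\|\f'\|_\infty$ stays bounded, because the Hessian entries scale like $e^{\pm 2(f_i'+f_j')}$.

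Your ellipsoid has radius $R\asymp\sqrt p\,(D_n+\min\{\|\bH^{(0)}-\bH_1\|_1,\|\bH^{(0)}-\bH_1\|_{\mathrm F}\}/p)$ in the norm $(\|\ba\|_2^2+p\|\bb\|_2^2)^{1/2}$. It therefore contains points with $\|\f'-\f\|_\infty=R$, and it lies inside $\mathcal B$ only when $D_n\lesssim p^{-1/2}$. Condition (C3) does not give this. Step 3 then presupposes exactly the a priori bound $\|\wt\f-\f\|_\infty=O(1)$ that you leave open.

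The paper breaks this circle as follows.
\begin{enumerate}
\item It first analyses the minimizer $(\wh\f,\wh\bbeta)$ of $l_p:=l(\cdot\,|\,\wh\bA,\bH^{(0)})$ over the box $\{\|\f'-\f\|_\infty\le\alpha_0,\ \|\bbeta'-\bbeta\|_\infty\le\alpha_0\}$.
\item The truth is feasible, so $l_p(\wh\f,\wh\bbeta)\le l_p(\f,\bbeta)$. Writing $l_E:=l(\cdot\,|\,\bA,\bH_1)$, this gives $l_E(\wh\f,\wh\bbeta)-l_E(\f,\bbeta)\le (l_p-l_E)(\f,\bbeta)-(l_p-l_E)(\wh\f,\wh\bbeta)$.
\item Curvature is then needed only for the population loss $l_E$ on the box. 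The quantities $\wh\bA$ and $\bH^{(0)}$ enter only through the right-hand side.
\item That right-hand side is bounded uniformly over the box by Theorem 4.1 of \cite{jiang2025two}, Bernstein's inequality, and deterministic Cauchy--Schwarz bounds on the $\bH^{(0)}$ terms.
\item The $\ell_\infty$ bound is obtained the same way, by comparing with the feasible point $(f_i,\wh\f_{-i},\wh\bbeta)$ rather than from a first-order condition.
\item The resulting error is below $\alpha_0$, so the constrained minimizer is interior and, by convexity, equals $(\wt\f,\wt\bbeta)$.
\end{enumerate}
This route also removes your reliance on curvature of the \emph{sample} Hessian. There, an $O(e_n)$ perturbation bound is not enough, since (C3) only forces $\log(np)e_n/p\to0$. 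Once localization is in place, your Step 3 (scalar equation plus Cauchy--Schwarz) works and is simpler than the paper's $z_1z_2\ge1/4$ device.

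\textbf{Gap 2: the quadratic remainder in Step 1.} Your claim that it is at most $e_n$ times the linear terms is false. Even with $\bH^{(0)}=\bH_1$, the derivative $\partial_{f_i}l_1$ at the truth contains $-\frac1p\sum_{j\ne i}e^{-2(f_i+f_j)}(\wh A_{i,j}-A_{i,j})^2$, and this term enjoys no cancellation. Its mean is $-\frac1p\sum_{j\ne i}e^{-2(f_i+f_j)}\{\var(\wh A_{i,j})+(\E\wh A_{i,j}-A_{i,j})^2\}$, which is of order $\sigma_n^2$ rather than $\sqrt{\log(np)/p}\,\sigma_n$. So the gradient at the truth is $O(D_n)$ only if $\sigma_n^2=O(D_n)$. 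For an unbiased $\wh\bA$ with $\var(\wh A_{i,j})\asymp\sigma_n^2$, the minimizer is shifted by order $\sigma_n^2$.

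The paper's proof has the same hole. In the bias part of its bound on $L_1$ (the term with $\wh A_{i,j}^2-A_{i,j}^2$), it controls $\E\wh A_{i,j}^2-A_{i,j}^2$ through $\Delta_n$. But $\E\wh A_{i,j}^2-A_{i,j}^2=\var(\wh A_{i,j})+(\E\wh A_{i,j}-A_{i,j})(\E\wh A_{i,j}+A_{i,j})$. On either route, the stated bound therefore needs an extra $\sigma_n^2$ term, or an assumption making it $O(D_n)$.
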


   Theorem \ref{thm2} below establishes the consistency of estimator $\wt \bH$ for $\bH_1$ obtained in Step 2, i.e.  $\wt \bH$ is defined as the RHS of \eqref{H} with $(\f, \bbeta, \bA)$ replaced by $(\wt \f, \wt \bbeta, \wh \bA)$, where $(\wt \f, \wt \bbeta)$ is obtained in Step 1.
   We set $\lambda =\left\| \wt \bM -\bM \right\|_{2}\big/p$ in \eqref{H}, where
   $\wt \bM$ and $\bM$ are $p\times p$ matrices with, respectively,
   $\hat{A}_{i,j}e^{-\tilde f_{i}-\tilde f_{j}}-e^{\tilde f_{i}+ \tilde f_{j}}  \bZ_{i,j}^{\top} \wt \bbeta$
   and ${A}_{i,j}e^{- f_{i}- f_{j}}-e^{ f_{i}+  f_{j}}  \bZ_{i,j}^{\top} \bbeta$
   as the $(i,j)$-th element.

        \begin{theorem}\label{thm2} 
        Let Conditions (C1) - (C3) hold.
\begin{itemize}
        \item[(1)]
        Then the inequality below holds with probability $1-(np)^{-c}$ for all 
        $(n,p)$ such that $np$ is sufficiently large, where $c>0$ is a constant.
        \begin{eqnarray*}
                {\|\wt{\bH}-\bH_1\|_{\mathrm{F}}} 
                 &\lesssim& \left(\sqrt{p}\|\wt \f - \f\|_{2} +p\|\wt \bbeta - \bbeta\|_{2}\right)+   pD_{n}+O(1).
            \end{eqnarray*}

\item[(2)]  
           There exists an orthogonal matrix ${\bf O} \in \mathbb{R}^{d_{1}\times d_{1}}$  such that
        \begin{eqnarray*}
        		\|U_{\tilde{\bH}}  {\bf O}-U_{\bH_{1}}\|_{\mathrm{F}} 
            &\lesssim& \frac{\sqrt{p}\left\|\wt{\f}-\f \right\|_{2}  + p \left\|\wt{\bbeta}-\bbeta \right\|_{\infty}+ pD_{n}}{\lambda_{d_{1}}\left(\bH_{1}\right)},
        \end{eqnarray*} 
              where  $U_{\bH_{1}}$ and $U_{\tilde{\bH}}$ denote the matrices whose columns are the right singular vectors corresponding to the $d_1$ largest singular values of $\bH_{1}$ and $\tilde{\bH}$ respectively.
       
    	\end{itemize}  
 \end{theorem}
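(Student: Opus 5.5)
The plan is to treat Step 2 as a perturbation of a nuclear-norm proximal problem. The key object is the matrix $\bM_0 \equiv \bD_f^{-1}\bA\bD_f^{-1} - \bD_f\bZ_\beta\bD_f$ at the true parameters. Off the diagonal it equals $\bD_f\bH_1\bD_f$ exactly. On the diagonal it differs from $\bD_f\bH_1\bD_f$ because $\hat A_{i,i}=0$, $\bZ_{i,i}={\bf 0}$ and $A_{i,i}=\theta_i^2H^{(1)}_{i,i}$ interact with the convention used. Write $\bL^*=\bD_f\bH_1\bD_f$; it has rank $d_1$.

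Put $\wt\bL = \bD_{\tilde f}\wt\bH\bD_{\tilde f}$. By Proposition \ref{Hessian_inf_H}(ii), $\wt\bL$ is the singular-value soft-thresholding of $\wt\bM$ (with the diagonal convention) at level $p\lambda=\|\wt\bM-\bM\|_2$. Equivalently, it minimizes $\frac{1}{2}\|\wt\bM-\bL\|_F^2+p\lambda\|\bL\|_*$. We write $\wt\bM = \bL^* + \bE$ with $\bE = (\wt\bM-\bM)+(\bM-\bL^*)$. The second piece is diagonal, has entries bounded by (C2), and so contributes only $O(\sqrt p)$ in Frobenius norm. In fact it contributes $O(1)$ per entry, and after the Frobenius-scale bookkeeping this becomes the $O(1)$-type remainder in the statement.

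\textbf{Part (1).} Use the standard soft-thresholding oracle bound. Since $p\lambda\ge\|\wt\bM-\bM\|_2$, comparing the objective at $\wt\bL$ and at $\bL^*$ and decomposing into the row/column space of $\bL^*$ gives
\[
\|\wt\bL-\bL^*\|_F \lesssim \sqrt{d_1}\,p\lambda + \|\bM-\bL^*\|_F \lesssim \sqrt{d_1}\,\|\wt\bM-\bM\|_2 + O(\sqrt p).
\]
Possibly the diagonal term can be sharpened: rank-$d_1$ projection of a bounded diagonal matrix costs $O(\sqrt{d_1})$ in operator-norm terms, which gives the $O(1)$. Next we bound $\|\wt\bM-\bM\|_2\le\|\wt\bM-\bM\|_F$ by splitting into three parts:
\begin{itemize}
\item[(a)] $(\hat A_{i,j}-A_{i,j})e^{-\tilde f_i-\tilde f_j}$. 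Its spectral norm is controlled by the matrix-Bernstein/independence argument of (C3) together with the bias term. This yields $\|\hat\bA-\bA\|_2\lesssim p D_n$: indeed $\log(np)e_n+\sqrt{p\log(np)}\sigma_n+\|\Delta_n\|_1$ is exactly the Bernstein-plus-bias bound.
\item[(b)] $A_{i,j}(e^{-\tilde f_i-\tilde f_j}-e^{-f_i-f_j})$. By the mean value theorem, with $\|\wt\f\|_\infty$ bounded by Theorem \ref{thm1}, each entry is $\lesssim |\tilde f_i-f_i|+|\tilde f_j-f_j|$. Hence its Frobenius norm is $\lesssim\sqrt p\|\wt\f-\f\|_2$.
\item[(c)] $e^{\tilde f_i+\tilde f_j}\bZ_{i,j}^\top\wt\bbeta - e^{f_i+f_j}\bZ_{i,j}^\top\bbeta$. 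This gives $\sqrt p\|\wt\f-\f\|_2$ plus $\|\bZ_{\tilde\beta-\beta}\|_F\lesssim p\|\wt\bbeta-\bbeta\|_2$, using (C1).
\end{itemize}
Finally, convert back via $\wt\bH-\bH_1 = \bD_{\tilde f}^{-1}(\wt\bL-\bL^*)\bD_{\tilde f}^{-1} + (\bD_{\tilde f}^{-1}\bL^*\bD_{\tilde f}^{-1}-\bH_1)$. The last term is again $\lesssim\sqrt p\|\wt\f-\f\|_2$ by the mean value theorem and $\|\bH_1\|_\infty<c$.

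\textbf{Part (2).} Apply the Davis--Kahan $\sin\Theta$ theorem in the Frobenius form (Yu--Wang--Samworth variant) to $\wt\bH$ and $\bH_1$. This gives an orthogonal ${\bf O}$ with
\[
\|U_{\tilde\bH}{\bf O}-U_{\bH_1}\|_F\lesssim \|\wt\bH-\bH_1\|_F/\lambda_{d_1}(\bH_1).
\]
Substituting part (1), with $\|\cdot\|_\infty$ replacing $\|\cdot\|_2$ for $\bbeta$ since $q$ is fixed, gives the claim. The $O(1)$ term is absorbed once $pD_n$ dominates; otherwise one uses the operator-norm version, where the diagonal error is $O(1)\le pD_n$ in the relevant regime.

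\textbf{Main obstacle.} The hardest step is (a), the spectral-norm bound $\|(\hat\bA-\bA)\circ W\|_2\lesssim pD_n$ with the random weights $W_{ij}=e^{-\tilde f_i-\tilde f_j}$. These weights depend on $\hat\bA$, so independence is lost. I would first factor the weights as $\bD_{\tilde f}^{-1}(\hat\bA-\bA)\bD_{\tilde f}^{-1}$ and use $\|\bD_{\tilde f}^{-1}\|_2\le e^{c}$, which holds on the high-probability event of Theorem \ref{thm1}. This reduces the problem to $\|\hat\bA-\bA\|_2$ for independent entries. There matrix Bernstein gives $\sqrt{p\log(np)}\sigma_n+\log(np)e_n$, and the mean part is bounded by $\|\Delta_n\|_1$.
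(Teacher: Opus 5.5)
Your proposal is essentially the paper's proof. You bound $\|\wt\bM-\bM\|_2$ through the $\hat\bA$ noise (factoring out $\bD_{\wt\f}^{-1}$, then matrix Bernstein plus bias), the $\wt\f$ error and the $\wt\bbeta$ error; you apply the nuclear-norm oracle bound of Lemma~\ref{low_rank} to get $\|\wt\bL-\bL^*\|_F\lesssim\sqrt{d_1}\,\|\wt\bM-\bM\|_2$; you rescale back to $\wt\bH$; and you finish with Davis--Kahan. The paper applies Davis--Kahan separately to the positive and negative eigen-blocks of $\bH_1$, because for indefinite $\bH_1$ the top-$d_1$ singular subspace is not a contiguous eigen-block.

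One correction concerns the diagonal. Since $A_{i,i}=\theta_i^2H^{(1)}_{i,i}$ and $\bZ_{i,i}=\mathbf{0}$, the matrix $\bM$ equals $\bD_f\bH_1\bD_f=\bL^*$ exactly. The diagonal discrepancy therefore sits inside $\wt\bM-\bM$ (through $\hat A_{i,i}=0$), is covered by $p\lambda=\|\wt\bM-\bM\|_2$, and gives the $O(1)$ term directly. Your fallback of a separate $O(\sqrt p)$ Frobenius term could not in general be sharpened to $O(1)$: soft-thresholding below the diagonal's operator norm need not truncate to rank $d_1$.
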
 
        Theorem \ref{thm2} (1) indicates that $p^{-1}\|\bH_1-\tilde{\bH}\|_{\mathrm{F}} =o(1)$  as long as the initiate estimators  $\wt{\f}$ and $\wt{\bbeta}$ satisfy  $\|\wt \f - \f\|_{2}/\sqrt{p}  +  \|\wt \bbeta - \bbeta\|_{2} =o(1)$.  From Theorems \ref{thm1} we know that this can be satisfied by setting  an appropriate initial estimates $\bH^{(0)}$ such that 
       $\left\| \bH^{(0)}- \bH_1 \right\|_{ \mathrm{F}}=o(p)$. 
      Part (2) of Theorem \ref{thm2} further establishes 
      an upper bound for the estimation error of the embedding $U_{\bH_{1}}$. We remark that if $\bH_1$ is the probability formation matrix of a
        Erdos Renyi graph, or a stochastic block model with $O(p)$ nodes in each community, the 
         eigen gap $\lambda_{d_{1}}(\bH_{1})$ is exactly of order $p$, and we would have $\|U_{\tilde{\bH}}  {\bf O}-U_{\bH_{1}}\|_{\mathrm{F}}=O(\left\|\wt{\f}-\f \right\|_{2}/\sqrt{p} + \left\|\wt{\bbeta}-\bbeta \right\|_{\infty}+ D_{n})$. 
        Consequently,  we can show that the two-step estimators $(\hat{\f}, \hat{\bbeta}, \hat{\bH})$ are  consistent when $D_{n}= \frac{\log(np)e_{n}}{p}+\sqrt{\frac{\log(np)}{p}}\sigma_{n} + \frac{\|\Delta_{n}\|_{1}}{p}\rightarrow 0$:  
    \begin{theorem}\label{thm3}
    	Assume that Conditions (C1)-(C3) hold and  $\|\bH_1\|_{\infty}<\infty$. For any given initial estimate $\bH^{(0)}$ with zero diagonal entries and satisfies $\|\bH^{(0)}- \bH_1\|_1=O_P(D_n)$, by choosing $\lambda\asymp D_{n}$, the inequality below holds with probability $1-(np)^{-c}$ for all sufficiently large and some constant $c>0$:
    	\begin{eqnarray}\label{b}
    		\max\left\{\| \hat{\f}-\f \|_{\infty}, \| \hat{\bbeta}- \bbeta\|_{\infty},\frac{\|\bH_{1}-\hat{\bH}\|_{\mathrm{F}}}{p}\right\} 
    		&\lesssim &   D_{n} + O\left(\frac{1}{p}\right).
    	\end{eqnarray}
    \end{theorem}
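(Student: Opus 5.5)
Theorem \ref{thm3} follows by chaining Theorems \ref{thm1} and \ref{thm2} and then accounting for the normalization in Step 4 of the algorithm. Throughout we work on the intersection of the high-probability events of Theorems \ref{thm1} and \ref{thm2}. This intersection still has probability at least $1-(np)^{-c}$ after shrinking $c$.

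\textbf{Step 1: bounds on $(\wt\f,\wt\bbeta)$.} Since $\|\bH^{(0)}-\bH_1\|_1 = O_P(D_n)$, in particular $\|\bH^{(0)}-\bH_1\|_1/p \lesssim D_n$. Theorem \ref{thm1} then gives
\[
\|\wt\f-\f\|_\infty \lesssim D_n, \qquad \|\wt\bbeta-\bbeta\|_2 \lesssim D_n,
\]
and hence also $\|\wt\f-\f\|_2/\sqrt p \lesssim D_n$.

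\textbf{Step 2: bound on $\wt\bH$.} Substituting these into Theorem \ref{thm2}(1) yields
\[
\|\wt\bH-\bH_1\|_F \lesssim pD_n + O(1), \qquad\text{i.e.}\qquad \|\wt\bH-\bH_1\|_F/p \lesssim D_n + O(1/p).
\]
Theorem \ref{thm2} was stated with $\lambda=\|\wt\bM-\bM\|_2/p$, whereas here $\lambda\asymp D_n$. So I will first check that $\|\wt\bM-\bM\|_2/p \lesssim D_n$. The entries of $\wt\bM-\bM$ split into two parts. The first is the noise part $(\wh A_{i,j}-A_{i,j})e^{-f_i-f_j}$; its spectral norm is controlled by a matrix Bernstein bound using $e_n$, $\sigma_n$ and $\|\Delta_n\|_1$, which is exactly the structure of $D_n$. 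The second is the perturbation part, which comes from replacing $(\f,\bbeta)$ by $(\wt\f,\wt\bbeta)$; each of its entries is $O(\|\wt\f-\f\|_\infty+\|\wt\bbeta-\bbeta\|_2)$ by (C1)--(C2), so its spectral norm is $\lesssim pD_n$. The soft-thresholding argument behind Theorem \ref{thm2}(1) needs only $p\lambda\gtrsim\|\wt\bM-\bM\|_2$ together with $p\lambda\lesssim pD_n$, so any $\lambda\asymp D_n$ with a large enough constant reproduces the same bound.

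\textbf{Step 3: the normalization in Step 4 of the algorithm.} Since $\|\bbeta\|_2=1$ and $\|\wt\bbeta-\bbeta\|_2\lesssim D_n\to0$, we have $|\|\wt\bbeta\|_2-1|\lesssim D_n$. Therefore $|\log\|\wt\bbeta\|_2|\lesssim D_n$, and
\[
\|\hat\bbeta-\bbeta\|_\infty \le \|\hat\bbeta-\bbeta\|_2 \le \bigl\|\wt\bbeta/\|\wt\bbeta\|_2-\wt\bbeta\bigr\|_2+\|\wt\bbeta-\bbeta\|_2 \lesssim D_n.
\]
For $\f$, $\|\hat\f-\f\|_\infty\le\|\wt\f-\f\|_\infty+|\log\|\wt\bbeta\|_2|/4\lesssim D_n$. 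For $\bH_1$,
\[
\|\hat\bH_1-\bH_1\|_F \le \|\wt\bH-\bH_1\|_F/\|\wt\bbeta\|_2 + \bigl|1/\|\wt\bbeta\|_2-1\bigr|\,\|\bH_1\|_F,
\]
and $\|\bH_1\|_F\le p\|\bH_1\|_\infty=O(p)$. Dividing by $p$ gives the bound $D_n+O(1/p)$, which combined with the other two bounds gives (\ref{b}).

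\textbf{Main obstacle.} The delicate point is the change of tuning parameter in Step 2, i.e.\ proving $\|\wt\bM-\bM\|_2\lesssim pD_n$ with the stated probability. The issue is the noise matrix: its entries are independent but not identically distributed, and it carries a bias part $\Delta_n$. I would bound the centered part by matrix Bernstein, with variance proxy $p\sigma_n^2$ and uniform bound $e_n$, giving order $\sqrt{p\log(np)}\,\sigma_n+\log(np)e_n$. I would bound the bias part by $\|\Delta_n\|_2\le\|\Delta_n\|_1$, which holds because $\Delta_n$ is symmetric. Both terms are $\lesssim pD_n$.
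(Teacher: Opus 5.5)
Your route is the paper's route. The paper's proof of Theorem~\ref{thm3} says only that one repeats the arguments of Theorems~\ref{thm1} and~\ref{thm2} with $\lambda=CD_n$, where $C$ is large enough that $\lambda$ dominates $\|\wt\bM-\bM\|_2/p$, which is $O(D_n)$ by Theorem~\ref{thm1}. Several things you supply are left implicit there:
\begin{itemize}
\item the explicit check of that domination (matrix Bernstein for the centred part, $\|\Delta_n\|_2\le\|\Delta_n\|_1$ for the bias, entrywise bounds for the plug-in perturbation);
\item the remark that the cone argument needs only $p\lambda\ge\|\wt\bM-\bM\|_2$ and then gives an error of order $p\lambda$;
\item the handling of the Step~4 rescaling.
\end{itemize}

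One step fails as written, and the paper's proof has the same defect. The claim $\|\wt\bM-\bM\|_2\lesssim pD_n$ ignores the diagonal. Because $\wh A_{i,i}=0$ and $\bZ_{i,i}=\mathbf{0}$, you have $\wt M_{i,i}=0$. But $\bM=\bD_f\bH_1\bD_f$ is the full low-rank target, so $M_{i,i}=A_{i,i}e^{-2f_i}=e^{2f_i}H^{(1)}_{i,i}$.
\begin{itemize}
\item This deterministic mismatch is not controlled by $e_n$, $\sigma_n$ or $\Delta_n$, all of which are defined off the diagonal.
\item It adds up to $\max_i e^{2f_i}|H^{(1)}_{i,i}|=O(1)$ to $\|\wt\bM-\bM\|_2$. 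It is exactly the $\max_i A_{i,i}$ term in the proof of Theorem~\ref{thm2}, and it produces the $O(1)$ you quote from Theorem~\ref{thm2}(1).
\item Hence $p\lambda\ge\|\wt\bM-\bM\|_2$ with $\lambda\asymp D_n$ requires $pD_n\gtrsim 1$.
\end{itemize}

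This is not cosmetic. Suppose $pD_n\to0$.
\begin{itemize}
\item Soft-thresholding moves each eigenvalue by at most $p\lambda$. So $\bD_{\wt\f}\wt\bH\bD_{\wt\f}$ is within $o(\sqrt p)$ of $\wt\bM$ in Frobenius norm.
\item The off-diagonal noise and plug-in error have Frobenius norm at most $\sqrt p\cdot pD_n=o(\sqrt p)$.
\item Therefore $\wt\bH-\bH_1$ is within $o(\sqrt p)$ of $-\diag(H^{(1)}_{1,1},\ldots,H^{(1)}_{p,p})$.
\end{itemize}
When the $H^{(1)}_{i,i}$ are bounded away from zero, as in the block models of Section~\ref{sec:numerical_study}, this gives $\|\wt\bH-\bH_1\|_F/p\asymp p^{-1/2}$ rather than $D_n+O(1/p)$. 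The oracle case $\wh\bA=\bA$, where $D_n=0$, is the extreme instance. To close the gap, either add the assumption $pD_n\gtrsim 1$ or take $\lambda\asymp D_n+1/p$. With either change, the rest of your argument goes through unchanged.
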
 
    \begin{remark}
      Note that $D_{n}=\frac{\log(np)e_{n}}{p}+\sqrt{\frac{\log(np)}{p}}\sigma_{n} + \frac{\|\Delta_{n}\|_{1}}{p} \rightarrow 0$ under Condition (C3), which quantify the quality of the initial estimator $\hat{\bA}$.  The terms  $ \frac{\|\Delta_{n}\|_{1}}{p}$ and $\sqrt{\frac{\log(p)}{p}}\sigma_{n}$ account for the effects of bias and variance on the estimation, respectively.
     We reiterate that the consistency of the two-stage estimator does not strictly require $\hat{\bA}$ to be consistent. Provided $\hat{A}_{i,j}\in [0,1]$, both $e_n$ and $\sigma_n$ are bounded; consequently,   $\frac{\log(np)e_{n}}{p}+\sqrt{\frac{\log(np)}{p}}\sigma_{n}\rightarrow 0$ as   $p\rightarrow \infty$. The consistency then depends only on the condition that $\frac{\|\Delta_{n}\|_{1}}{p}\rightarrow 0$.
  When $\hat{A}_{i,j}$ is an unbiased estimator of $A_{i,j}$, we have $\max\{e_n,\sigma_n \}=o(1)$, $\|\Delta_{n}\|_{\infty}=0$ and $D_{n}=O\left( \sqrt{\frac{\log(np)}{p}}\left(\sigma_n+\sqrt{\frac{\log(np)}{p}}e_n\right)\right)$. The term $\sqrt{\frac{\log(np)}{p}}$ is consistent with the classical rate in the static single-network setting, while $\sigma_n+\sqrt{\frac{\log (p)}{p}}e_n$ characterizes how this rate how this rate is refined by the estimation accuracy of $\hat{\bA}$. 
    \end{remark}
 \subsection{Initial point selection}\label{ini}
Now we discuss how to choose a good enough initial for the estimation consistency established in Section 4.1.  
 We consider two  scaling regimes: (i) $\left\| \bH_1\right\|_{\mathrm{F}}=O(pD_{n})$, and (ii) $\left\| \bH_1\right\|_{\mathrm{F}} \gtrsim pD_{n}$. Notably, as $ D_{n} \to 0 $, these regimes overlap asymptotically.
  
Under Regime (i), we can simply set $\bH^{(0)}= {\bf{0}}_{p,p}$. Under Regime (ii), we may initiate $(\f,\bbeta)$ through below steps.
        \begin{itemize}
            \item Step 1: $\hat{\bK}:=  \argmin_{\bY}\frac{1}{2p}\|\hat{\bA}  -  \bY\|_{\mathrm{F}}^2+ \lambda \|\bY\|_*$.
            \item Step 2: $\left( \hat{\f}^{(0)}, \hat{\bbeta}^{(0)}\right):= \argmin_{\f,\bbeta} \frac{1}{2p}\sum_{1\leq i\neq j\leq p}\left(\left( \hat{A}_{i,j} - \hat{K}_{i,j}\right)e^{-f_{i}-f_{j}}-e^{f_{i}+f_{j}}    \bZ_{i,j}^\top \beta_{k}\right)^2$.
            \item Step 3: Return $\bH^{(0)}=(H^{(0)}_{i,j})$ by setting $\bH^{(0)}_{i,i}=0$ for $i=1,\ldots, p$, and $\bH^{(0)}_{i,j}=e^{-2\hat{f}_i^{(0)}-2\hat{f}_j^{(0)}}\hat{K}_{i,j}$ for $1\le i\ne j \le p$. Here $\hat{K}_{i,j}$ is the $(i,j)$th element of $\hat{\bK}$ and $\hat{f}_i^{(0)}$ is the $i$th element of $\hat{\f}^{(0)}$.
        \end{itemize} 
 As shown in  Theorem \ref{thm1}, $\tilde{\f}$ is consistent under the normalized $\ell_2$ norm if the initial $\bH^{(0)}$ satisfies $\frac{\|\bH^{(0)}-\bH_1\|_F}{p}=o(1)$, while uniform consistency requires  $\frac{\|\bH^{(0)}-\bH_1\|_1}{p}=o(1)$. 
We next show that these requirements are satisfied under the following regularity conditions:

 \begin{itemize}
     	\item[(C4)] $\|\bZ_\beta\|_2=O_p(pD_n)$.

 \item[(C5)]  
      $\bH_1$ is $\mu$-incoherent in that 
    $\|U_{\bH_1}\|_{2,\infty}
    \le
    \sqrt{\frac{\mu d_1}{p}}$ for some constant $\mu$.  

\item[(C6)]
  The projections of $\bZ_\beta$ and the bias term \(\Delta_n\) on the singular vectors $U_{\bH_1}$
    satisfy the projected row-noise bounds
    \[
    \|\bZ_\beta U_{\bH_1}\|_{2,\infty}
    \lesssim
    \sqrt{\frac{\log(p)}{p}}\|\bZ_\beta\|_2, \quad \|\Delta_n  U_{\bH_1}\|_{2,\infty}
    \lesssim
    \sqrt{\frac{\log(p)}{p}}\|\Delta_n\|_2.
    \]
\end{itemize}
Condition (C4) ensures that the signal from the observed covariates does not spectrally dominate or distort the low-rank structure of the latent homophily matrix, and is naturally satisfied under standard centered and independent covariate structures. Conditions (C5) and (C6) are further required for establishing the uniform consistency. 
Condition (C5) is the so-called $\mu-$incoherence condition, which is widely used in the low-rank matrix recovery literature \citep{candes2012exact,candes2011robust,chen2015incoherence,chi2019nonconvex}. It requires the eigenspaces of \(\bH_1\) to be delocalized with respect to the standard basis, meaning that its singular vectors cannot be concentrated on a small number of coordinates. This rules out  pathological cases where \(\bH_1\) is spiky (i.e., coherent).
In such cases, even a small perturbation in the Frobenius norm could lead to a disproportionately large error in the induced 
1-norm, making the requirement $\frac{\|\bH^{(0)}-\bH_1\|_1}{p}=o(1)$ for uniform consistency impossible to satisfy.
 Condition (C6) is a projected row-noise condition ensuring that the covariate effects and estimation biases do not highly concentrate on any single node when projected onto the latent space. This delocalization property is essential for establishing the uniform consistency of the node-specific parameters.

Now, under Condition (C4) we can establish that $\frac{\|\bH^{(0)}-\bH_1\|_F}{p}\asymp\frac{\|\bH_1\|_F}{p}\asymp \frac{1}{\sqrt{p}}\| \left( \wt{\f},\sqrt{p}\wt{\bbeta}\right) -\left(\f, \sqrt{p}\bbeta\right) \|_{2} =O(D_n)$ holds for both regimes  (c.f. equation  \eqref{L2consistency}). 
Furthermore, under the $\mu$-incoherence Condition (C5), we have  $\frac{\|\bH^{(0)}-\bH_1\|_1}{p}= \frac{\| \bH_1\|_1}{p}\asymp \frac{\| \bH_1\|_F}{p}=O(D_n)$ for Regime (i). Next, The following proposition further shows that this key requirement for uniform consistency is also satisfied under Regime (ii):  
        \begin{proposition}\label{prop3}
         Let Conditions (C1)-(C6) hold. Under Regime (ii):  $\left\| \bH_1\right\|_{\mathrm{F}} \gtrsim pD_{n}$, by setting $\lambda \asymp D_{n}$, we have, probability $1-(np)^{-c}$ for all $(n,p)$ for some large enough constant $c>0$,  
          \begin{align*}
		  \frac{\|\bH^{(0)}-\bH_1\|_1}{p}= O\left(D_{n}\right)+O\left(\frac{1}{p}\right). 
		  \end{align*}
          Moreover, for the corresponding estimators $(\hat{\f}^{(0)},\hat{\bbeta}^{(0)})$ obtained in the initial Step 2, we have:
         \begin{align*}
		    	\| \left( \hat{\f}^{(0)},\hat{\bbeta}^{(0)}\right) -\left(\f, \bbeta\right) \|_{\infty} =O\left(D_{n}\right)+O\left(\frac{1}{p}\right).
		  \end{align*}
        \end{proposition}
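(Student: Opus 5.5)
The proof of Proposition \ref{prop3} will go through the entrywise structure of the low-rank estimate $\hat{\bK}$. Write the population counterpart of $\hat{\bA}$ as
\[
\bA=\bD_f^2\bZ_\beta\bD_f^2+\bK, \qquad \bK\equiv \bD_f^2\bH_1\bD_f^2 .
\]
The diagonal of $\bK$ is $O(1)$ per entry and only contributes the $O(1/p)$ terms. By (C2), $\bK$ has rank at most $d_1$ and $\|U_{\bK}\|_{2,\infty}\lesssim \|U_{\bH_1}\|_{2,\infty}$ up to constants, so (C5) transfers to $\bK$.

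\textbf{Step 1: spectral-norm control of the perturbation.} Step 1 of the initialization is singular value soft-thresholding of $\hat{\bA}$ at level $p\lambda$, and $\hat{\bA}=\bK+\bE$ with
\[
\bE=(\hat{\bA}-\E\hat{\bA})+(\E\hat{\bA}-\bA)+\bD_f^2\bZ_\beta\bD_f^2 .
\]
For the first term I apply a matrix Bernstein inequality for symmetric matrices with independent entries, using (C3). This gives $\|\hat{\bA}-\E\hat{\bA}\|_2\lesssim \sqrt{p\log(np)}\,\sigma_n+\log(np)e_n\le pD_n$ with probability $1-(np)^{-c}$. The bias term satisfies $\|\Delta_n\|_2\le\|\Delta_n\|_1\le pD_n$, and the covariate term is $O(pD_n)$ by (C4). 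Hence $\|\bE\|_2\lesssim pD_n$. With $\lambda\asymp D_n$ chosen so that $p\lambda\ge 2\|\bE\|_2$, standard nuclear-norm proximal bounds give $\mathrm{rank}(\hat{\bK})\le 2d_1$ and $\|\hat{\bK}-\bK\|_2\lesssim pD_n$. Since the rank is bounded, this also gives $\|\hat{\bK}-\bK\|_F\lesssim pD_n$.

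\textbf{Step 2: the $\ell_1$ (row-sum) bound.} This is the main obstacle, because the Frobenius bound only yields $\|\cdot\|_1\lesssim \sqrt p\cdot pD_n$. I would use a leave-one-out / $\ell_{2,\infty}$ eigenvector perturbation argument in the style of \cite{chen2015incoherence,chi2019nonconvex}. Decompose $\hat{\bK}-\bK$ through the top-$d_1$ eigenspace, write $\hat{\bK}U\approx\bK U+\bE U$, and bound the rows of $\bE U_{\bH_1}$ as follows.
\begin{itemize}
\item The noise part is handled by independence and Bernstein: $\|(\hat{\bA}-\E\hat{\bA})U\|_{2,\infty}\lesssim\sqrt{\log(np)}\,\sigma_n+\log(np)e_n\sqrt{\mu d_1/p}$.
\item The bias and covariate parts are exactly what (C6) controls, giving $\sqrt{\log p/p}\cdot pD_n$.
\end{itemize}
Combining these with (C5) and the regime (ii) signal strength, so that the eigengap of $\bK$ is of order $\|\bH_1\|_F\gtrsim pD_n$, should yield $\|\hat{\bK}-\bK\|_{\infty}\lesssim D_n\cdot\mathrm{polylog}$, or better a row-wise bound $\|\hat{\bK}-\bK\|_{2,\infty}\lesssim \sqrt p\,D_n$. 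Cauchy--Schwarz then gives $\|\hat{\bK}-\bK\|_1\lesssim pD_n$. If the crude rate picks up a stray $\sqrt{\log p}$, I would absorb it through the definition of $D_n$, which already carries $\log(np)$ factors.

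\textbf{Step 3: the estimates $(\hat{\f}^{(0)},\hat{\bbeta}^{(0)})$ and $\bH^{(0)}$.} The loss in Step 2 of the initialization is exactly $l_1(\f,\bbeta\mid\hat{\bA}-\hat{\bK},\mathbf{0})$. Its population version with input $\bA-\bK$ is minimized at the truth, since $\bA-\bK=\bD_f^2\bZ_\beta\bD_f^2$ off the diagonal. I therefore rerun the argument of Theorem \ref{thm1}, with the strong convexity from Proposition \ref{Hessian_inf_H}(i), treating $\hat{\bA}-\hat{\bK}$ as an initial estimate whose error over $\bA-\bK$ is $(\hat{\bA}-\bA)-(\hat{\bK}-\bK)$. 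The first piece contributes $D_n$, exactly as in Theorem \ref{thm1}. The second plays the role of $\bH^{(0)}-\bH_1$, and the sup-norm part of Theorem \ref{thm1} charges it at $\|\hat{\bK}-\bK\|_1/p=O(D_n)$ by Step 2. This gives $\|\hat{\f}^{(0)}-\f\|_\infty+\|\hat{\bbeta}^{(0)}-\bbeta\|_\infty=O(D_n)+O(1/p)$.

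Finally, off the diagonal
\[
H^{(0)}_{i,j}-H_{1,ij}=e^{-2\hat f^{(0)}_i-2\hat f^{(0)}_j}\bigl(\hat{K}_{ij}-K_{ij}\bigr)+\Bigl(e^{-2\hat f^{(0)}_i-2\hat f^{(0)}_j}-e^{-2f_i-2f_j}\Bigr)K_{ij}.
\]
By (C2) and the sup-norm bound on $\hat{\f}^{(0)}$ the weights are bounded, so the first term contributes at most $\|\hat{\bK}-\bK\|_1$ in row-sum norm. The second term is at most $C\|\hat{\f}^{(0)}-\f\|_\infty\|\bK\|_1\lesssim D_n\,p$. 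Adding the $O(1)$ diagonal contribution and dividing by $p$ gives $\|\bH^{(0)}-\bH_1\|_1/p=O(D_n)+O(1/p)$.
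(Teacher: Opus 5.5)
Your outline has the same architecture as the paper's proof. You soft-threshold $\hat\bA$ at level $p\lambda\asymp pD_n$ and get $\|\hat\bK-\bK\|_{\mathrm F}\lesssim pD_n$ from (C3)--(C4); your Weyl-plus-rank argument is a cleaner substitute for Lemma~\ref{low_rank}(1). You upgrade this to a row-sum bound by an $\ell_{2,\infty}$ argument driven by (C5)--(C6), using leave-one-out where the paper's Lemma~\ref{low_rank}(4) uses a first-order expansion of spectral shrinkage. You then refit $(\f,\bbeta)$ on $\hat\bA-\hat\bK$ and close with the same triangle inequality for $\bH^{(0)}-\bH_1$. Your remark that incoherence transfers from $\bH_1$ to $\bK=\bD_f^2\bH_1\bD_f^2$ supplies a step the paper leaves implicit.

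The genuine difference is the refit. The paper invokes Corollaries~\ref{cor2} and~\ref{cor3}. These need no independence, but they bound the refit error by $\|\hat\bA-\hat\bK-\tilde\bA\|/p$ with $\tilde\bA=\bD_f^2\bZ_\beta\bD_f^2$. That quantity contains $\|\hat\bA-\bA\|/p$, which for independent entries is typically of order $\sigma_n$, not $D_n$. Your split is what actually retains the factor $\sqrt{\log(np)/p}$: the independent part $\hat\bA-\bA$ goes through the concentration step of Theorem~\ref{thm1}, and the dependent part $\hat\bK-\bK$ is handled deterministically via $\|\hat\bK-\bK\|_{\mathrm F}$ and $\|\hat\bK-\bK\|_1$. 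When you carry it out, the squared data term produces products $\hat K_{i,j}(\hat A_{i,j}-A_{i,j})$, and $\hat\bK$ depends on all of $\hat\bA$. Write $\hat K_{i,j}=K_{i,j}+(\hat K_{i,j}-K_{i,j})$ and bound the second piece by $e_n\|\hat\bK-\bK\|_1/p$.

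Three justifications in your plan fail as written, although the paper's argument passes over the same points.

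\textbf{The log factor.} A factor $\sqrt{\log p}$ multiplying $D_n$ cannot be absorbed into $D_n$, whatever logarithms $D_n$ already contains. Under (C6) as stated, the covariate contribution after Cauchy--Schwarz is $\sqrt p\cdot\sqrt{\log p/p}\,\|\bZ_\beta\|_2\lesssim\sqrt{\log p}\,pD_n$. So your argument gives $O(\sqrt{\log p}\,D_n)$ unless (C6) is sharpened; Lemma~\ref{low_rank}(4) has the same issue, with $\alpha_{n,p}=\sqrt{\log p}$ silently dropped. For the bias you do not need (C6) at all: $\|(\E\hat\bA-\bA)U\|_{2,\infty}\le\sqrt{d_1}\,\|U\|_{2,\infty}\|\Delta_n\|_1$ together with (C5) already gives the right order.

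\textbf{The eigengap.} Regime (ii) controls $\|\bH_1\|_{\mathrm F}$, i.e.\ essentially $|\lambda_1(\bH_1)|$, not $|\lambda_{d_1}(\bK)|$. The $\ell_{2,\infty}$ perturbation argument you sketch needs $|\lambda_{d_1}(\bK)|\ge C\|\mathbf E\|_2$ with $C$ large. The paper builds this into the hypotheses of Lemma~\ref{low_rank}(4) rather than deriving it. Relatedly, leave-one-out only treats the independent noise. Also, (C6) gives only first-order control, and it is stated for $\bZ_\beta U_{\bH_1}$ rather than $\bD_f^2\bZ_\beta\bD_f^2U_{\bK}$. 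That does not by itself bound the higher-order terms coming from the deterministic covariate perturbation.

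\textbf{Curvature of the refit.} Proposition~\ref{Hessian_inf_H}(i) only guarantees curvature of order $\min_{i<j}\tilde A_{i,j}^2$ for the refit. Your Step 3 therefore needs $|\bZ_{i,j}^\top\bbeta|$ bounded away from zero, which (C1)--(C6) do not provide. The paper's route through the corollaries needs this too.
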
 
In practice, we can simply initialize $\bH^{(0)}$ as the zero matrix ${\bf 0}_{p\times p}$ or use the estimator obtained from Steps 1–3 under Regime (ii) described above, avoiding the need for multiple starting values. Although the theoretical conditions can be challenging to verify in practice, our results above provides a strong theoretical foundation for a highly efficient, two-initialization strategy, eliminating the need for heuristic multi-start methods.

\subsection{Extension to edge-dependent case }\label{Depen}
Due to the simplicity of the proposed normalized least squared loss, in addition to the efficient estimation of the node-heterogeneity parameters and homophily effects simpler, it also allows us to consider more complicated structures in the network formations. In this section, we extend the theoretical results to the case where the edges are allowed to be dependent. 

\begin{corollary}\label{cor2}
    Let Conditions (C1) and (C2) hold, $\|\bH_{1}\|_{\infty}<\infty$ and $D_n \rightarrow 0$ as $np\rightarrow \infty$. 
    Then the inequality below holds with probability $1-(np)^{-c}$ for all $(n,p)$ such that $np$ is sufficiently large.
		\begin{eqnarray*}
		\frac{1}{\sqrt{p}}\| \left( \hat{\f},\sqrt{p}\hat{\bbeta}\right) -\left(\f, \sqrt{p}\bbeta\right) \|_{2}
			&\leq&  C\left( \frac{\| \hat{\bA}- \bA\|_{\mathrm{F}}}{p} + \frac{\left\| \bH^{(0)}- \bH_{1}\right\|_{\mathrm{F}}}{p}\right),\\
            \left\|\f-\hat{\f} \right\|_{\infty}&\leq &C\left( \frac{\| \hat{\bA}- \bA\|_{1}}{p} + \frac{\left\| \bH^{(0)}- \bH_{1}\right\|_{1}}{p}\right),
		\end{eqnarray*}
    uniformly for all $\bH^{(0)}$, where   $C$ and $c$  are positive constants independent of $\bH^{(0)}$.
\end{corollary}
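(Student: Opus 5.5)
The plan is to rerun the deterministic part of the proof of Theorem \ref{thm1}, keeping the noise term $\hat{\bA}-\bA$ explicit instead of controlling it through the independence in (C3). That argument splits cleanly into a deterministic perturbation step, resting on the strong convexity of Proposition \ref{Hessian_inf_H}(i), and a stochastic step where independence of the $\hat A_{i,j}$ enters. Only the second step is changed.

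\textbf{Step 1: score bound.} Write $\bA^*$ for the matrix with entries $A_{i,j}$ for $i\ne j$ and zero diagonal. The true $(\f,\bbeta)$ minimizes $l_1(\cdot\,|\bA^*,\bH_1)$ up to the diagonal terms, which contribute $O(1)$ in total. I will compute the gradient of $l_1(\f,\bbeta|\hat{\bA},\bH^{(0)})$ at the truth. Set $R_{i,j}=(\hat A_{i,j}-A_{i,j})e^{-f_i-f_j}+e^{f_i+f_j}(H_{1,i,j}-H^{(0)}_{i,j})$. The $f_i$-component of the gradient is, up to bounded weights from (C2), $p^{-1}\sum_j R_{i,j}\,w_{i,j}$. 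The $\bbeta$-component is $p^{-1}\sum_{i,j}R_{i,j}\bZ_{i,j}w_{i,j}$. Here $w_{i,j}$ is uniformly bounded because $\hat A_{i,j}$, $A_{i,j}$ and $\bH^{(0)}$ enter only through bounded quantities; I treat the remaining $\hat A$-dependence of the weights as in Theorem \ref{thm1}.

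\textbf{Step 2: bound the gradient without independence.} Cauchy--Schwarz over rows gives
\[
\|\nabla_{\f}\|_2\lesssim p^{-1}\big(\|\hat{\bA}-\bA\|_F+\|\bH^{(0)}-\bH_1\|_F\big)+O(p^{-1/2}),
\]
\[
\|\nabla_{\bbeta}\|_2\lesssim p^{-1}\big(p\,\|\hat{\bA}-\bA\|_F+p\,\|\bH^{(0)}-\bH_1\|_F\big)/p,
\]
using $\|\bZ_{i,j}\|_\infty\le c_3$. For the row-wise sup bound, $\max_i|\nabla_{f_i}|\lesssim p^{-1}\big(\|\hat{\bA}-\bA\|_1+\|\bH^{(0)}-\bH_1\|_1\big)$, because row sums are bounded by the matrix $1$-norm (the matrices are symmetric). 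No concentration inequality is needed; the price is that the rate is stated in terms of the norms of $\hat{\bA}-\bA$ instead of $D_n$.

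\textbf{Step 3: strong convexity to an $\ell_2$ error bound.} Proposition \ref{Hessian_inf_H}(i) gives curvature $\gtrsim (\|\ba\|_2^2+p\|\bb\|_2^2)$ times $\min A_{i,j}^2$, bounded below under (C2). I then use the usual argument: a convex function whose Hessian is uniformly positive definite along the segment to the minimizer. This yields $\|(\hat\f-\f,\sqrt p(\hat\bbeta-\bbeta))\|_2\lesssim \|\nabla_{\f}\|_2+p^{-1/2}\|\nabla_{\bbeta}\|_2$. Dividing by $\sqrt p$ gives the first inequality; the $O(p^{-1/2})$ diagonal remainder is absorbed.

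The main obstacle is that the curvature constant involves $\min A_{i,j}^2$ evaluated at the \emph{noisy} data matrix and along the segment to the minimizer. I will first localize: the assumption $D_n\to0$, together with a preliminary crude bound, keeps $\hat\f$ in a bounded $\ell_\infty$-neighbourhood of $\f$, so that $e^{\pm f_i}$ stay bounded.

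\textbf{Step 4: uniform bound.} I use the first-order condition coordinatewise, as in the $\ell_\infty$ part of Theorem \ref{thm1}. The equation $\partial_{f_i}l_1=0$ has diagonal dominant term $\asymp(\hat f_i-f_i)$ with a bounded-below coefficient. The remaining terms are $p^{-1}\sum_j(\hat f_j-f_j)$-type cross terms, which the $\ell_2$ bound controls as $p^{-1/2}\|\hat\f-\f\|_2$, plus the $\bbeta$-error and the row sums from Step 2. Since $\|\cdot\|_F/p\le\|\cdot\|_1/p$ for these matrices (entries bounded), the cross terms are dominated by the $1$-norm terms. Solving for $\hat f_i-f_i$ gives the second inequality, uniformly in $\bH^{(0)}$ because no constant depends on it. The probability statement $1-(np)^{-c}$ is only used for the localization event.
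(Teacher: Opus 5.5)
The gap is in Steps~3--4. There you need strong convexity of the \emph{sample} loss $l_1(\cdot\,|\,\hat{\bA},\bH^{(0)})$. Proposition~\ref{Hessian_inf_H}(i), applied to that function, gives the curvature constant $\min_{i<j}\hat A_{i,j}^2$. The minimum runs over the entries of the matrix plugged into the loss, i.e.\ of $\hat{\bA}$, not of $\bA$, and neither (C2) nor $D_n\to 0$ bounds it below. The corollary is meant for essentially arbitrary, possibly dependent, $\hat{\bA}$. Already for the relative-frequency estimator \eqref{MLE0}, $\hat A_{i,j}=0$ for every pair with no observed $0\to 1$ transition, which is typical in large sparse networks, so the bound is vacuous. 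Localizing $\hat{\f}$ in an $\ell_\infty$-ball only controls the factors $e^{\pm f_i}$; it does nothing for $\min_{i<j}\hat A_{i,j}$. The same objection applies to the ``bounded-below diagonal coefficient'' in Step~4. That coefficient is $\partial^2 l_1/\partial f_i^2$ at an intermediate point, an average over $j$ of terms built from row $i$ of $\hat{\bA}$ and $\bH^{(0)}$, and nothing bounds it below uniformly in $i$ and $\bH^{(0)}$. Bounding the sample Hessian below by the population Hessian minus a perturbation does not rescue this cleanly either: the perturbation is small only when $(\|\hat{\bA}-\bA\|_1+\|\bH^{(0)}-\bH_1\|_1)/p$ is, so it cannot give the Frobenius form of the first inequality.

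The paper never uses sample curvature. Since $(\hat{\f},\hat{\bbeta})$ minimizes $l_p=l(\cdot\,|\,\hat{\bA},\bH^{(0)})$, the basic inequality gives $l_E(\hat{\f},\hat{\bbeta})-l_E(\f,\bbeta)\le l_\Delta(\hat{\f},\hat{\bbeta})$, with $l_\Delta$ as in \eqref{l_d} and $l_E$ built from the true $\bA$ and $\bH_1$. So only population curvature enters. Moreover, $l_\Delta$ splits exactly into $L_1,\dots,L_4$. Each is a sum of products of a function of $(\f,\bbeta)$ vanishing at the truth with an entry of $\hat{\bA}-\bA$ or $\bH^{(0)}-\bH_1$. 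The corollary merely replaces the concentration bounds for $L_1,L_2$ in Theorem~\ref{thm1} with deterministic Cauchy--Schwarz bounds. The $\ell_\infty$ part is done coordinatewise the same way. It uses the population curvature $l_E(\hat f_i,\f_{-i},\bbeta)-l_E(f_i,\f_{-i},\bbeta)\ge C|\hat f_i-f_i|^2$, and handles the cross terms with the ratio $d_{n,p}$ and the $z_1z_2\ge 1/4$ splitting.

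Step~4 also relies on a false inequality. The bound $\|\bM\|_{\mathrm F}\le\|\bM\|_1$ fails for bounded-entry symmetric matrices: for $\epsilon$ times a symmetric permutation matrix with zero diagonal, $\|\bM\|_{\mathrm F}=\epsilon\sqrt p$ while $\|\bM\|_1=\epsilon$. In general only $\|\bM\|_{\mathrm F}\le\sqrt p\,\|\bM\|_1$ holds. So inserting the Frobenius-form $\ell_2$ bound into the cross terms leaves an extra $\|\hat{\bA}-\bA\|_{\mathrm F}/p$ that the claimed right-hand side does not dominate. What you need is the $\ell_2$ bound with $\min\{\|\cdot\|_1,\|\cdot\|_{\mathrm F}\}/p$, as in Theorem~\ref{thm1}. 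In your notation it follows from $\sum_i(\sum_j|R_{i,j}|)^2\le p\|\bR\|_1^2$, which gives $\|\nabla_{\f}\|_2\lesssim\|\bR\|_1/\sqrt p$ and $\|\nabla_{\bbeta}\|_2\lesssim\|\bR\|_1$.

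The scalings in Step~2 are also off. Cauchy--Schwarz gives $\|\nabla_{\f}\|_2\lesssim p^{-1/2}\|\bR\|_{\mathrm F}$ and $\|\nabla_{\bbeta}\|_2\lesssim\|\bR\|_{\mathrm F}$. That is exactly what Step~3 needs to produce $\|\bR\|_{\mathrm F}/p$ after dividing by $\sqrt p$; your stated exponents would give a rate better by a factor $\sqrt p$. Finally, there is no diagonal remainder, because the diagonal terms of the sample loss vanish ($\hat A_{i,i}=0$, $\bZ_{i,i}={\bf 0}$, $H^{(0)}_{i,i}=0$).
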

\begin{remark}
   Corollary \ref{cor2} demonstrates that the consistency of estimators $\{\hat{\f},\hat{\bbeta}\}$ can be achieved without imposing independence assumptions on $\hat{A}_{i,j}$'s. Instead, the core requirement lies in the overall consistency in the sense that $\|\hat{\bA}- \bA\|_{\mathrm{F}}=o(p)$. Notably, the requirement $\|\hat{\bA}- \bA\|_{\mathrm{F}}=o(p)$ allows the $\hat{A}_{i,j}$'s to be dependent. For example, this would hold when the $\hat{A}_{i,j}$'s are consistent and weakly dependent. 
\end{remark}
 Similar, based on Corollary\ref{cor2} we can also extend the error bounds of $\hat{\bH}$ in Theorem \ref{thm2} to the case where the independence assumption in (C3) not holds.
\begin{corollary}\label{cor3}
    Let Conditions (C1) and (C2) hold, $\|\bH_{1}\|_{\infty}<\infty$ and $D_n \rightarrow 0$ as $np\rightarrow \infty$.         Then the inequality below holds with probability $1-(np)^{-c}$ for all $(n,p)$ such that $np$ is sufficiently large, where $c>0$ is a constant.
    	\begin{eqnarray*}
    		\max\left\{\| \left( \hat{\f},\hat{\bbeta}\right) -\left(\f, \bbeta\right) \|_{\infty},\frac{\|\bH_{1}-\hat{\bH}\|_{\mathrm{F}}}{p}\right\} 
    		&= & O\left( \frac{\| \hat{\bA}- \bA\|_{\mathrm{1}}}{p} \right) .
    	\end{eqnarray*}
\end{corollary}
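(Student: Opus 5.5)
The plan is to assemble Corollary \ref{cor3} from Corollary \ref{cor2} and the deterministic parts of the arguments behind Theorems \ref{thm2} and \ref{thm3}. Independence in (C3) only enters those proofs through concentration bounds for $\hat{\bA}-\bA$; here we replace them by the deterministic norms $\|\hat{\bA}-\bA\|_1$ and $\|\hat{\bA}-\bA\|_{\mathrm F}$.

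\textbf{Step 1.} First, apply Corollary \ref{cor2} with an initial $\bH^{(0)}$ satisfying $\|\bH^{(0)}-\bH_1\|_1/p=O(\|\hat{\bA}-\bA\|_1/p)$; this is the analogue of the requirement in Theorem \ref{thm3}. We need $\|\cdot\|_{\mathrm F}\le\|\cdot\|_1$ up to the factor $\sqrt p$ for symmetric matrices. Precisely, $\|\bM\|_{\mathrm F}^2\le \|\bM\|_1\|\bM\|_\infty\cdot p$, and the entries of $\hat{\bA}-\bA$ are bounded. Hence $\|\hat{\bA}-\bA\|_{\mathrm F}/p\lesssim \sqrt{\|\hat{\bA}-\bA\|_1/p}$.

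To get the linear rate, I would instead argue that $\|\bM\|_{\mathrm F}/p\le \|\bM\|_2/\sqrt p\le \|\bM\|_1/\sqrt p$, using $\|\bM\|_2\le\|\bM\|_1$ for symmetric $\bM$. This is better than linear in $\|\bM\|_1/p$. Thus both bounds of Corollary \ref{cor2} are $O(\|\hat{\bA}-\bA\|_1/p)$. This gives $\|\tilde\f-\f\|_\infty$ and $\|\tilde\bbeta-\bbeta\|_2$ at that rate. The normalization in Step 4 of the algorithm is Lipschitz near $\|\bbeta\|_2=1$, so the rate transfers to $(\hat\f,\hat\bbeta)$ in sup norm.

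\textbf{Step 2.} Next, bound $\tilde\bH$ by the deterministic argument of Theorem \ref{thm2}(1). Write $\tilde\bM-\bM$ as
\[
\bD_{\tilde f}^{-1}(\hat{\bA}-\bA)\bD_{\tilde f}^{-1}+\bigl(\bD_{\tilde f}^{-1}\bA\bD_{\tilde f}^{-1}-\bD_f^{-1}\bA\bD_f^{-1}\bigr)-\bigl(\bD_{\tilde f}\bZ_{\tilde\beta}\bD_{\tilde f}-\bD_f\bZ_\beta\bD_f\bigr).
\]
By (C2) and Step 1, $\bD_{\tilde f}$ stays bounded. The first term has spectral norm at most $C\|\hat{\bA}-\bA\|_1$. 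The remaining two terms are entrywise $O(\|\tilde\f-\f\|_\infty+\|\tilde\bbeta-\bbeta\|_2)$ by (C1), so their $\|\cdot\|_1$ norm, and hence spectral norm, is $O(p\cdot\|\hat{\bA}-\bA\|_1/p)$.

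\textbf{Step 3.} Choose $\lambda=\|\tilde\bM-\bM\|_2/p$. The standard singular-value soft-thresholding bound then gives
\[
\|\bD_{\tilde f}\tilde\bH\bD_{\tilde f}-\bD_f\bH_1\bD_f\|_{\mathrm F}\lesssim \sqrt{d_1}\,\|\tilde\bM-\bM\|_2 + \text{(diagonal term)}.
\]
The diagonal term comes from $\hat A_{ii}=0$ and contributes $O(\sqrt p)$, which is $O(1)$ after dividing by $p$ relative to rank. Dividing by $p$ and undoing $\bD_f$ by boundedness yields $\|\bH_1-\hat\bH\|_{\mathrm F}/p=O(\|\hat{\bA}-\bA\|_1/p)$, plus a $p^{-1/2}$-type term. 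That extra term must be absorbed, so I would check it is dominated. This is the main obstacle: in Theorem \ref{thm3} an additive $O(1/p)$ appeared, and here we must either argue the diagonal mismatch is dominated by $\|\hat{\bA}-\bA\|_1/p$ or treat it as lower order. I would try first to redefine $\bA$'s diagonal as $\hat{\bA}$'s, i.e.\ zero, so that it is absorbed into the error term.
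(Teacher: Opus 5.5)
There is a genuine gap in Step 1, and the rest of your argument inherits it. Your chain $\|\bM\|_{\mathrm F}/p\le\|\bM\|_2/\sqrt p\le\|\bM\|_1/\sqrt p$ is correct. However, $\|\bM\|_1/\sqrt p=\sqrt p\cdot(\|\bM\|_1/p)$ is \emph{larger} than $\|\bM\|_1/p$ by a factor $\sqrt p$, not smaller.

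No norm inequality can repair this. If $\hat\bA-\bA$ has one entry of size $\epsilon$ in each row (a matching pattern), then $\|\hat\bA-\bA\|_{\mathrm F}/p=\epsilon/\sqrt p$, whereas $\|\hat\bA-\bA\|_1/p=\epsilon/p$. Likewise, the diagonal mismatch $\hat A_{i,i}=0\neq A_{i,i}$ by itself typically makes $\|\hat\bA-\bA\|_{\mathrm F}/p$ of order $p^{-1/2}$, while adding only $O(1/p)$ to $\|\hat\bA-\bA\|_1/p$.

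Hence the Frobenius-form $\ell_2$ bound of Corollary \ref{cor2}, used as a black box, does not give $\|\wt\bbeta-\bbeta\|_2=O(\|\hat\bA-\bA\|_1/p)$. The same problem affects the $\|\bH^{(0)}-\bH_1\|_{\mathrm F}/p$ term. Your Step 2 bound on $\|\wt\bM-\bM\|_2$ uses exactly this rate for $\wt\bbeta$, so the conclusions for $\hat\bbeta$ and for $\hat\bH$ are both unsupported. The $\ell_\infty$ bound for $\wt\f$ in Corollary \ref{cor2} is already stated with 1-norms, so that piece is fine. It does rely on your assumption $\|\bH^{(0)}-\bH_1\|_1/p=O(\|\hat\bA-\bA\|_1/p)$, which is necessary because the statement omits any condition on $\bH^{(0)}$.

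The missing idea is to go back inside the basic inequality of Theorem \ref{thm1}, which is the route the paper takes. Replace the concentration bounds \eqref{up1}--\eqref{up2} by the deterministic bounds $L_1\lesssim p^{-1}\|\f-\f^*\|_1\|\hat\bA-\bA\|_1$ and $L_2\lesssim\|\bbeta-\bbeta^*\|_1\|\hat\bA-\bA\|_1$. These hold because the cross terms only see the row sums $\sum_{j}|\hat A_{i,j}-A_{i,j}|\le\|\hat\bA-\bA\|_1$. The same works for $L_3$ and $L_4$ with $\|\bH^{(0)}-\bH_1\|_1$.

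Combine these with the strong-convexity lower bound $C_1(\|\f-\f^*\|_2^2+p\|\bbeta-\bbeta^*\|_2^2)$ and $\|\f-\f^*\|_1\le\sqrt p\|\f-\f^*\|_2$. This gives $p^{-1/2}\|(\wt\f,\sqrt p\,\wt\bbeta)-(\f^*,\sqrt p\,\bbeta^*)\|_2\lesssim(\|\hat\bA-\bA\|_1+\|\bH^{(0)}-\bH_1\|_1)/p$ directly. This 1-norm $\ell_2$ rate is also the input that the coordinate-wise $\ell_\infty$ argument needs. With it, your Steps 2 and 3, which reproduce the deterministic part of the proof of Theorem \ref{thm2}, go through.

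Finally, the diagonal ``obstacle'' in Step 3 is not one, and your proposed fix goes the wrong way. With the paper's convention $A_{i,i}=\theta_i^2H^{(1)}_{i,i}$ and $\bZ_{i,i}=\mathbf 0$, one has $\bM=\bD_f\bH_1\bD_f$ exactly, of rank $d_1$. The mismatch with $\hat A_{i,i}=0$ then lies inside $\hat\bA-\bA$, and it enters $\|\wt\bM-\bM\|_2$ only through $\|\hat\bA-\bA\|_2\le\|\hat\bA-\bA\|_1$. A bounded diagonal matrix has spectral norm $O(1)$, not $O(\sqrt p)$, so it never produces a $p^{-1/2}$ term. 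This is why the additive $O(1/p)$ of Theorem \ref{thm3} is absorbed here. Setting $A_{i,i}=0$ instead takes the mismatch out of $\hat\bA-\bA$ and makes $\bM$ differ from the rank-$d_1$ target by a diagonal matrix, which reintroduces exactly that extra term.
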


\section{Numerical studies}\label{sec:numerical_study}

In this section, we assess the finite-sample performance of the proposed estimators through simulations and two real network datasets, namely the Citation networks and the Co-authorship networks.

\subsection{Simulation study: parameter estimation}

We first investigate the estimation accuracy of the proposed method under different combinations of the temporal sample size $n$ and the network size $p$. Specifically, we consider $n \in \{100, 500\}$ and $p \in \{150, 300\}$. The model parameters are generated according to the following settings:
\begin{itemize}
    \item[$S_{1,\f}$:] The node heterogeneity parameters are set to $\btheta = \mathbf{1}_{p}$. The entries of $\bbeta \in \mathbb{R}^{3}$ are generated independently from the uniform distribution $U(-1,1)$ and then rescaled so that $\|\bbeta\|_{2}=1$.
    
    \item[$S_{2,\f}$:] The node heterogeneity parameters are set to $\btheta = 0.5\,\mathbf{1}_{p}$. The entries of $\bbeta \in \mathbb{R}^{3}$ are generated independently from the uniform distribution $U(-1,1)$ and then rescaled so that $\|\bbeta\|_{2}=1$.
    
    \item[$S_{1,\bH}$:] The latent homophily effect $\bH_{1}$ is generated from the expected adjacency matrix of a stochastic block model  with $k=3$ communities:
    \[
    \bH_{1} := \bC \bG \bC^\top,
    \]
    where $\bC \in \mathbb{R}^{p \times k}$ is the community membership matrix and $\bG \in \mathbb{R}^{k \times k}$ is the block probability matrix. In the experiment, the $p$ nodes are partitioned into three communities of equal size, and
    \[
    \bG=
    \begin{bmatrix}
        0.05 & 0.01 & 0.01\\
        0.01 & 0.05 & 0.01\\
        0.01 & 0.01 & 0.05
    \end{bmatrix}.
    \]
    
    \item[$S_{2,\bH}$:] The latent homophily effect $\bH_{1}$ is generated from the expected adjacency matrix of a stochastic block model with $k=3$ communities:
    \[
    \bH_{1} := \bC \bG \bC^\top,
    \]
    where $\bC \in \mathbb{R}^{p \times k}$ and $\bG \in \mathbb{R}^{k \times k}$. Again, the $p$ nodes are partitioned into three equal-sized communities, and
    \[
    \bG=
    \begin{bmatrix}
        0.25 & 0.10 & 0.10\\
        0.10 & 0.20 & 0.10\\
        0.10 & 0.10 & 0.15
    \end{bmatrix}.
    \]
\end{itemize}

The four simulation models are summarized in Table~\ref{setting}. For each model, the formation and dissolution networks are generated according to the corresponding parameter settings for $\bA$ and $\bB$. For example, under Model~1, the formation parameters are generated from $S_{1,\f}$ and the dissolution parameters are generated from $S_{2,\f}$, while both latent homophily matrices are generated according to $S_{1,\bH}$.
  
\begin{table}[]
\centering
\caption{Simulation settings for four models. The columns labeled ``$\mathbf{A}$" and ``$\mathbf{B}$" specify the settings for matrices $\mathbf{A}$ and $\mathbf{B}$, respectively. The ``$\mathbf{A}_{\rm stat}$" and ``$\mathbf{B}_{\rm stat}$" columns report the (min, mean, max) of the non-zero elements in matrices $\mathbf{A}$ and $\mathbf{B}$, respectively. Models~3--4 serve as oracle benchmarks, which isolate the estimation error attributable to the second-stage procedure from the error induced by estimating $\bA$ and $\bB$. A checkmark ($\checkmark$) in the ``Oracle" column indicates that $\hat{\bA}$ and $\hat{\bB}$ are set to be the  real matrices $\mathbf{A}$ and $\mathbf{B}$ in the estimation; otherwise, the estimators $\hat{\mathbf{A}}$ and $\hat{\mathbf{B}}$ defined in \eqref{MLE0} were used.}
\label{setting}
\begin{tabular}{|c|c|c|c|c|c|}
\hline
Model & $\bA$ & $\bB$ & $\bA_{\rm stat}$  & $\bB_{\rm stat}$ & Oracle \\ \hline
Model 1 & $S_{1,\f}$ + $S_{1,\bH}$ & $S_{2,\f}$ + $S_{1,\bH}$ & (0.01,0.10,0.27) & $(0.003,0.03,0.08)$ &  \\ \hline
Model 2 & $S_{1,\f}$ + $S_{2,\bH}$ & $S_{2,\f}$ + $S_{2,\bH}$ & (0.10,0.24,0.61) & (0.03,0.07,0.17) &  \\ \hline
Model 3 & $S_{1,\f}$ + $S_{1,\bH}$ & $S_{2,\f}$ + $S_{1,\bH}$ & (0.01,0.10,0.27) & $(0.003,0.03,0.08)$ & $\checkmark$ \\ \hline
Model 4 & $S_{1,\f}$ + $S_{2,\bH}$ & $S_{2,\f}$ + $S_{2,\bH}$ & (0.10,0.24,0.61) & (0.03,0.07,0.17) & $\checkmark$ \\ \hline
\end{tabular}
\end{table}

As indicated in the last column of Table~\ref{setting}, for Models~1--2 we initialize $\hat{\bA}$ and $\hat{\bB}$ using the estimators in \eqref{MLE0}, whereas for Models~3--4 we set $\hat{\bA}=\bA$ and $\hat{\bB}=\bB$. In all experiments, the tuning parameter is chosen as
\[
\lambda = \sqrt{\frac{\log(np)}{np}}.
\]
Based on this choice, we compute the two-stage estimators $\{\hat{\f}, \hat{\bbeta}, \hat{\bH}^{(0)}\}$ and define
\[
\hat{\btheta}:=\bigl(\exp(2\hat{f}_{i})\bigr)_{1\le i\le p}.
\]
All experiments are repeated 100 times.

Figure~\ref{fig:loss} displays the objective values along the local iterations. Across all models and ((n,p)) settings, the loss decreases substantially after the first update and then changes only marginally in later iterations. This suggests that the two-step estimator is already close to a stable local solution in these experiments. Accordingly, in the numerical implementation we terminate the algorithm after at most two local updates of Steps 1--2, unless the stopping criterion is reached earlier. This convention keeps the computation simple while producing estimates that are essentially indistinguishable from those obtained by further iterations.

\begin{figure}[ht]
    \centering
    \includegraphics[width=0.8\linewidth]{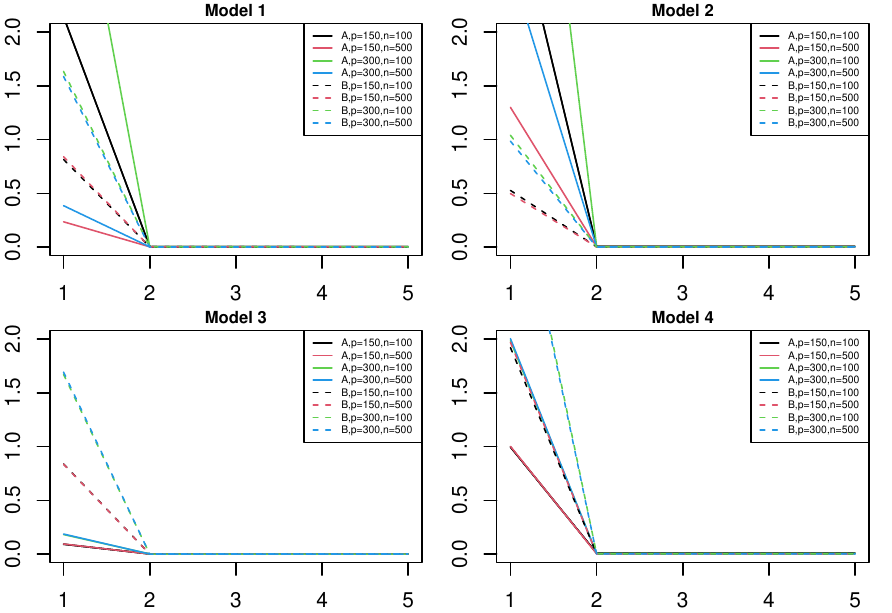}
    \caption{Values of the loss function across iterations under different models and $(n,p)$ settings.}
    \label{fig:loss}
\end{figure}

The estimation results for the estimators $(\hat{\btheta}, \hat{\bbeta}, \hat{\bH}^{(0)})$ and $(\hat{\bfeta}, \hat{\bnu}, \hat{\bH}^{(1)})$ are reported in Table~\ref{tab:acc}. The corresponding results for the two-stage estimators are very similar and are therefore omitted for brevity.
 
 Specifically, the table reports the mean and standard deviation of the following metrics: 
 \begin{itemize}
     \item ${\rm Error_{nh}}$: the $\ell_{\infty}$ norm error of the estimators for node heterogeneity. For the formation parameters in $\bA$,  ${\rm Error_{nh}}=\|\hat{\btheta}-\btheta\|_\infty$, and for the dissolution parameters in $\bB$,  ${\rm Error_{nh}}=\|\hat{\bfeta}-\bfeta\|_\infty$. 
     \item ${\rm Error_{co}}$: the $\ell_{\infty}$ norm errorof the covariate coefficient estimator. For the formation parameters in $\bA$,  ${\rm Error_{co}}= \|\hat{\bbeta}-\bbeta\|_\infty$, and for the dissolution parameters in $\bB$,  ${\rm Error_{co}}=\|\hat{\bnu}-\bnu\|_\infty$.
     \item ${\rm Error_{\lambda}}$: maximum error over all eigenvalues of the latent homophily estimator $\hat{\bH}$, defined as ${\rm Error_{\lambda}} := \max_{k}|\lambda_{k}(\bH_{1})-\lambda_{k}(\hat{\bH})|$.
     \item ${\rm Error_{H}}$: the normalized Frobenius norm error of $\hat{\bH}$ defined as $\|\mathbf{H}_{1} - \hat{\mathbf{H}} \|_{ \mathrm{F}}/p$ under different settings.
 \end{itemize}

Table~\ref{tab:acc} reports the estimation errors for the node heterogeneity parameters, the covariate coefficients, and the latent homophily matrices. Several patterns can be observed. First, increasing the temporal sample size from (n=100) to (n=500) generally improves the estimation of the node-specific parameters and the covariate effects, especially in Models 1--2 where $\widehat A$ and $\widehat B$ are obtained from empirical transition frequencies. This is consistent with the fact that the first-stage estimation error of the transition probabilities is an important source of uncertainty. Second, the normalized Frobenius error $|H_1-\widehat H|_F/p$ remains small across the simulation settings and is typically reduced when more temporal observations are available. Third, the oracle settings in Models 3--4 lead to substantially smaller errors for the node heterogeneity and covariate parameters, indicating that much of the finite-sample error in Models 1--2 is attributable to the estimation of (A) and (B), rather than to the second-stage optimization procedure itself.

The eigenvalue error $ \max_k |\lambda_k(H_1)-\lambda_k(\widehat H)|$ should be interpreted with some care. Since the leading eigenvalues of the block-structured latent homophily matrix scale with (p), the raw eigenvalue error is not directly comparable across different network sizes. For this reason, we mainly use it as a diagnostic for the recovery of the leading latent structure within each fixed-(p) setting, while the normalized Frobenius error provides a more comparable measure of the overall matrix estimation accuracy. Overall, the simulation results support the theoretical prediction that the proposed estimator is stable when the initial transition-probability estimates are sufficiently accurate, and they also illustrate the finite-sample impact of estimating these transition probabilities from a limited number of network snapshots.

\begin{table}[]
\scriptsize
\centering
\caption{Mean and standard deviation of the estimation errors under Models 1-4. ${\rm Error_{nh}}$: for part $\bA$,  $\|\hat{\btheta} -\btheta\|_\infty$, and for part $\bB$,  $\|\hat{\bfeta}-\bfeta\|_\infty$; ${\rm Error_{co}}$: for part $\bA$,  $\|\hat{\bbeta}-\bbeta\|_\infty$, and for part  $\bB$,  $\|\hat{\bnu}-\bnu\|_\infty$; ${\rm Error_{\lambda}}= \max_{k}|\lambda_{k}(\bH_{1})-\lambda_{k}(\hat{\bH})|$ and $\|\mathbf{H}_{1} - \hat{\mathbf{H}} \|_{ \mathrm{F}}/p$.} 
\setlength{\tabcolsep}{0.4pt}
\label{tab:acc}
\begin{tabular}{|cccccccccc|}
\hline
\multicolumn{10}{|c|}{Model 1} \\ \hline
p & \multicolumn{1}{c|}{part} & \multicolumn{4}{c|}{n=100} & \multicolumn{4}{c|}{n=500} \\ \cline{3-10} 
 & \multicolumn{1}{c|}{} & \multicolumn{1}{c}{${\rm Error_{nh}}$} & ${\rm Error_{co}}$ & ${\rm Error_{\lambda}}$ & \multicolumn{1}{c|}{${\rm Error_{H}}$} & {${\rm Error_{nh}}$} & ${\rm Error_{co}}$ & ${\rm Error_{\lambda}}$ & \multicolumn{1}{c|}{${\rm Error_{H}}$} \\ \hline
\multirow{2}{*}{150} & \multicolumn{1}{c|}{$\bA$} & 3.56(1.08) & 1.27(0.06) & 13.82(0.69) & \multicolumn{1}{c|}{0.10($<0.01$)} & 0.47(0.16) & 0.05(0.06) & 1.79(0.18) & 0.04($<0.01$) \\
 & \multicolumn{1}{c|}{$\bB$} & 1.30(0.74) & 1.32(0.07) & 14.61(0.87) & \multicolumn{1}{c|}{0.11(0.02)} & 0.79(1.77) & 0.15(0.06) & 17.18(1.68) & 0.12(0.01) \\
\multirow{2}{*}{300} & \multicolumn{1}{c|}{A} & 0.94(1.12) & 1.18(0.04) & 36.11(8.43) & \multicolumn{1}{c|}{0.13(0.03)} & 0.28(0.07) & 0.02($<0.01$) & 3.55(0.09) & 0.04($<0.01$) \\
 & \multicolumn{1}{c|}{$\bB$} & 3.67(0.67) & 1.29(0.07) & 28.76(0.65) & \multicolumn{1}{c|}{0.11($<0.01$)} & 0.24($<0.01$) & 0.14(0.10) & 18.60(0.84) & 0.08($<0.01$) \\ \hline
                     \multicolumn{10}{|c|}{Model 2} \\ \hline
p & \multicolumn{1}{c|}{part} & \multicolumn{4}{c|}{n=100} & \multicolumn{4}{c|}{n=500} \\ \cline{3-10} 
 & \multicolumn{1}{c|}{} & {${\rm Error_{nh}}$} & ${\rm Error_{co}}$ & ${\rm Error_{\lambda}}$ & \multicolumn{1}{c|}{${\rm Error_{H}}$} & {${\rm Error_{nh}}$} & ${\rm Error_{co}}$ & ${\rm Error_{\lambda}}$ & \multicolumn{1}{c|}{${\rm Error_{H}}$} \\ \hline
\multirow{2}{*}{150} & \multicolumn{1}{c|}{$\bA$} & 1.11(0.10) & 0.12(0.06) & 17.37(0.19) & \multicolumn{1}{c|}{0.14($<0.01$)} & 0.81(0.04) & 0.15(0.06) & 17.23(0.10) & 0.13($<0.01$) \\
 & \multicolumn{1}{c|}{$\bB$} & 4.61(2.87) & 1.31(0.08) & 5.77(0.08) & \multicolumn{1}{c|}{0.07($<0.01$)} & 0.47(0.02) & 0.10(0.05) & 13.25(0.40) & 0.10($<0.01$) \\
\multirow{2}{*}{300} & \multicolumn{1}{c|}{$\bA$} & 1.02(0.06) & 0.12(0.07) & 35.22(0.22) & \multicolumn{1}{c|}{0.15($<0.01$)} & 0.82(0.03) & 0.16(0.07) & 34.59(0.13) & 0.13($<0.01$) \\
 & \multicolumn{1}{c|}{$\bB$} & 1.97(1.68) & 1.00(0.46) & 5.77(0.08) & \multicolumn{1}{c|}{0.01(0.02)} & 0.49(0.02) & 0.10(0.04) & 27.19(0.77) & 0.10($<0.01$) \\ \hline
 \multicolumn{10}{|c|}{Model 3} \\ \hline
p & \multicolumn{1}{c|}{part} & \multicolumn{4}{c|}{n=100} & \multicolumn{4}{c|}{n=500} \\ \cline{3-10} 
 & \multicolumn{1}{c|}{} & {${\rm Error_{nh}}$} & ${\rm Error_{co}}$ & ${\rm Error_{\lambda}}$ & \multicolumn{1}{c|}{${\rm Error_{H}}$} & {${\rm Error_{nh}}$} & ${\rm Error_{co}}$ & ${\rm Error_{\lambda}}$ & \multicolumn{1}{c|}{${\rm Error_{H}}$} \\ \hline
\multirow{2}{*}{150} & \multicolumn{1}{c|}{$\bA$} & 0.04($<0.01$) & $<0.01(<0.01)$ & 0.66(0.09) & \multicolumn{1}{c|}{$<0.01(<0.01)$} & 0.04($<0.01$) & $<0.01(<0.01)$ & 0.63(0.08) & $<0.01(<0.01)$ \\
 & \multicolumn{1}{c|}{$\bB$} & 0.15(0.01) & 0.09(0.05) & 4.89(0.25) & \multicolumn{1}{c|}{0.03(0.01)} & 0.17($<0.01$) & 0.11(0.05) & 5.71(0.22) & 0.04(0.01) \\
\multirow{2}{*}{300} & \multicolumn{1}{c|}{$\bA$} & 0.04($<0.01$) & $<0.01(<0.01)$ & 1.30(0.16) & \multicolumn{1}{c|}{$<0.01(<0.01)$} & 0.04($<0.01$) & $<0.01(<0.01)$ & 1.29(0.18) & $<0.01(<0.01)$ \\
 & \multicolumn{1}{c|}{$\bB$} & 0.17($<0.01$) & 0.10(0.05) & 11.42(0.33) & \multicolumn{1}{c|}{0.04(0.01)} & 0.18($<0.01$) & 0.10(0.05) & 11.96(0.31) & 0.04(0.01) \\ \hline
 \multicolumn{10}{|c|}{Model 4} \\ \hline
p & \multicolumn{1}{c|}{part} & \multicolumn{4}{c|}{n=100} & \multicolumn{4}{c|}{n=500} \\ \cline{3-10} 
 & \multicolumn{1}{c|}{} & {${\rm Error_{nh}}$} & ${\rm Error_{co}}$ & ${\rm Error_{\lambda}}$ & \multicolumn{1}{c|}{${\rm Error_{H}}$} & {${\rm Error_{nh}}$} & ${\rm Error_{co}}$ & ${\rm Error_{\lambda}}$ & \multicolumn{1}{c|}{${\rm Error_{H}}$} \\ \hline
\multirow{2}{*}{150} & \multicolumn{1}{c|}{$\bA$} & 0.06($<0.01$) & 0.03($<0.01$) & 0.29(0.04) & \multicolumn{1}{c|}{0.02($<0.01$)} & 0.07($<0.01$) & 0.02(0.01) & 0.42(0.05) & 0.03($<0.01$) \\
 & \multicolumn{1}{c|}{$\bB$} & 0.14(0.02) & 0.07(0.03) & 1.16(0.24) & \multicolumn{1}{c|}{0.08(0.02)} & 0.14(0.03) & 0.07(0.03) & 1.20(0.28) & 0.08(0.02) \\
\multirow{2}{*}{300} & \multicolumn{1}{c|}{$\bA$} & 0.07($<0.01$) & 0.03($<0.01$) & 0.84(0.11) & \multicolumn{1}{c|}{0.03($<0.01$)} & 0.08($<0.01$) & 0.03($<0.01$) & 1.09(0.18) & 0.04($<0.01$) \\
 & \multicolumn{1}{c|}{$\bB$} & 0.15(0.02) & 0.07(0.02) & 2.60(0.46) & \multicolumn{1}{c|}{0.09(0.02)} & 0.15(0.02) & 0.06(0.03) & 2.85(0.37) & 0.10(0.01) \\ \hline
\end{tabular}
\end{table}

\subsection{Real data analysis}
\subsubsection{MADStat dataset}
We applied the proposed method to the MADStat dataset \citep{ji2022co}, which contains co-authorship and co-citation relationships constructed from 83,331 articles published over 41 years in 36 leading journals in statistics, probability, and machine learning. We consider the following two network sequences.

\begin{itemize}
    \item \textbf{Co-citation networks.} 
    Following \cite{ji2022co}, we consider
    the 21 overlapping aggregated networks constructed from rolling time windows between 1991 and 2015. 
 An edge between two authors indicates that they were co-cited at least once by someone else in the time window concerned. Starting from a total of 2,183 authors, we removed those with fewer than five edges over the whole period, resulting in $p=576$ nodes in the networks.
    
    \item \textbf{Co-authorship networks.}  
    This subset consists of 41 annual networks from 1974 to 2015. An edge between two authors indicates that they co-authored at least one paper in a given year. Again starting from the same set of 2,183 authors, we removed those with fewer than five edges over the study period, yielding $p=466$ nodes.
\end{itemize}

We applied the proposed model (2.1) without $\bZ_{i,j}$
to both the co-authorship and co-citation network data, to explore their dynamic dependence structure. In particular, With the plug-in estimators $\hat{\bA}$ and $\hat{\bB}$ defined in \eqref{MLE0}, we  estimated the model parameters using the procedure described in Section~\ref{latentG}. Our goal here is not only to fit the dynamic network models, but also to assess whether the estimated latent homophily components yield scientifically interpretable structure in real collaboration and co-citation networks.

Figure~\ref{fig-yao-c} displays the first two coordinates of the latent homophily embedding for Prof.~Jianqing Fan in the co-citation networks, together with the 19 nearest neighbors in the estimated latent space. Similarly, Figure~\ref{fig-fan} shows the corresponding embedding for Prof.~Jianqing Fan in the co-authorship networks also with the 19 nearest neighbors.
In each figure, the left and right panels show the two-dimensional spectral embeddings of $\hat{\bH}_1$ and $\hat{\bH}_2$, respectively, using the first two eigenvectors of each estimated latent homophily matrix as author
coordinates. Here $\hat{\bH}_1$, the estimator for $\bH_1$, is associated with forming new edges, while $\hat{\bH}_2$, the estimator for $\bH_2:=(\bU_{i,2}^\top \bLambda_2 \bU_{j,2})_{1\le i, j\le p}$, is associated with dissolving existing edges.  The authors close with each other in the left panel tend to have similar latent traits related to the formation of new links. In the co-citation networks, such proximity may reflect similarity in research topics or citation communities. In the co-authorship networks, it may indicate similar attributes leading to future co-authorship. By contrast, proximity in the right panels
suggests the similar traits contributing towards dissolving existing links.

To provide a more concrete interpretation of the dissolution embedding in the co-authorship networks, we further examine the authors highlighted in red in the right panel of Figure~\ref{fig-fan}. Table~\ref{tablefan} reports the year of their most recent collaboration with Prof.~Jianqing Fan, together with the total number of co-authored papers recorded in the MADStat dataset. These authors all collaborated with Prof.~Fan in the past, but no longer maintain an active co-authorship link in the observed network process.

\begin{figure}[ht!]
	\begin{minipage}[b]{0.45\textwidth} 
		\centering 
		\includegraphics[width=0.9\textwidth]{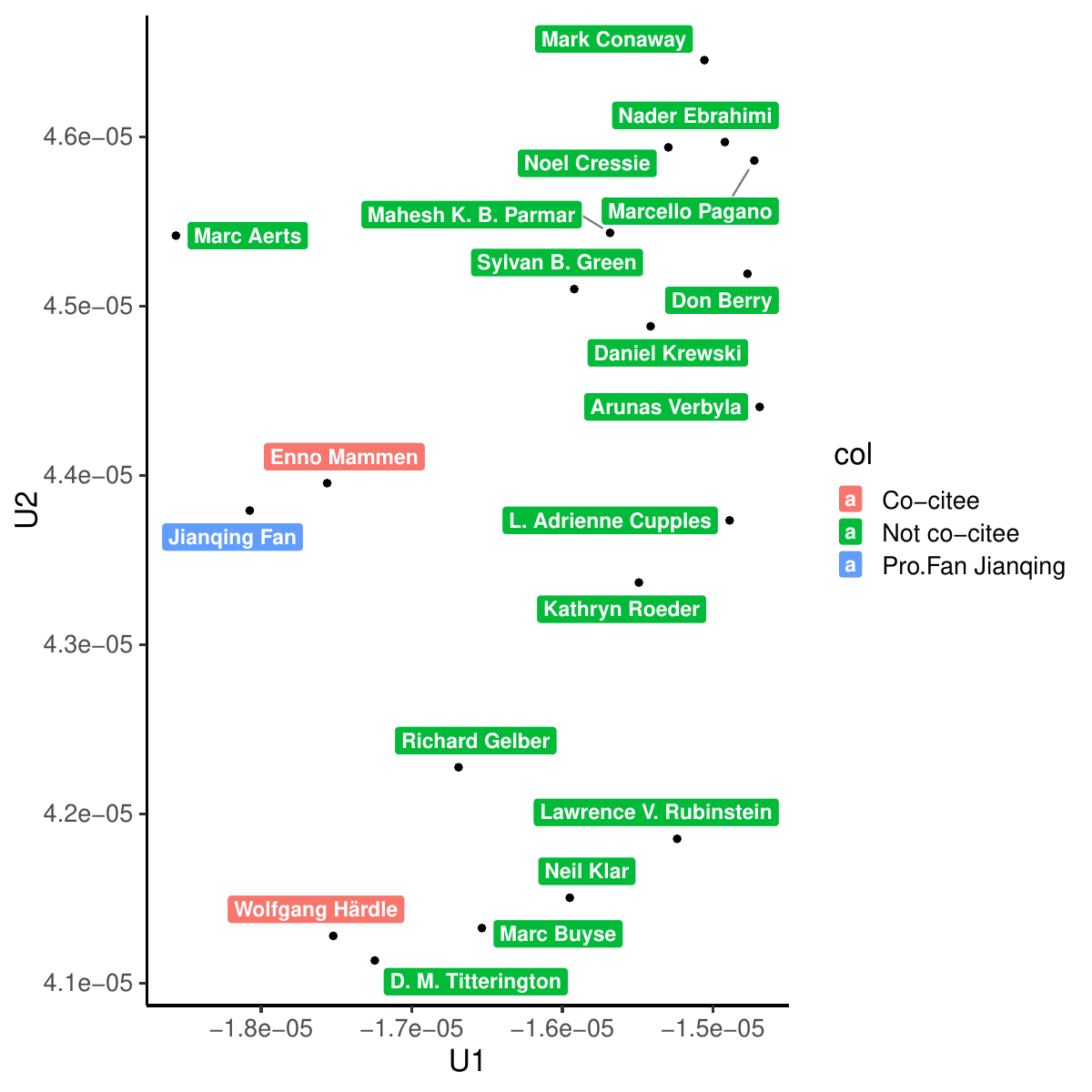}
	\end{minipage}
	\begin{minipage}[b]{0.45\textwidth} 
		\centering 
		\includegraphics[width=0.9\textwidth]{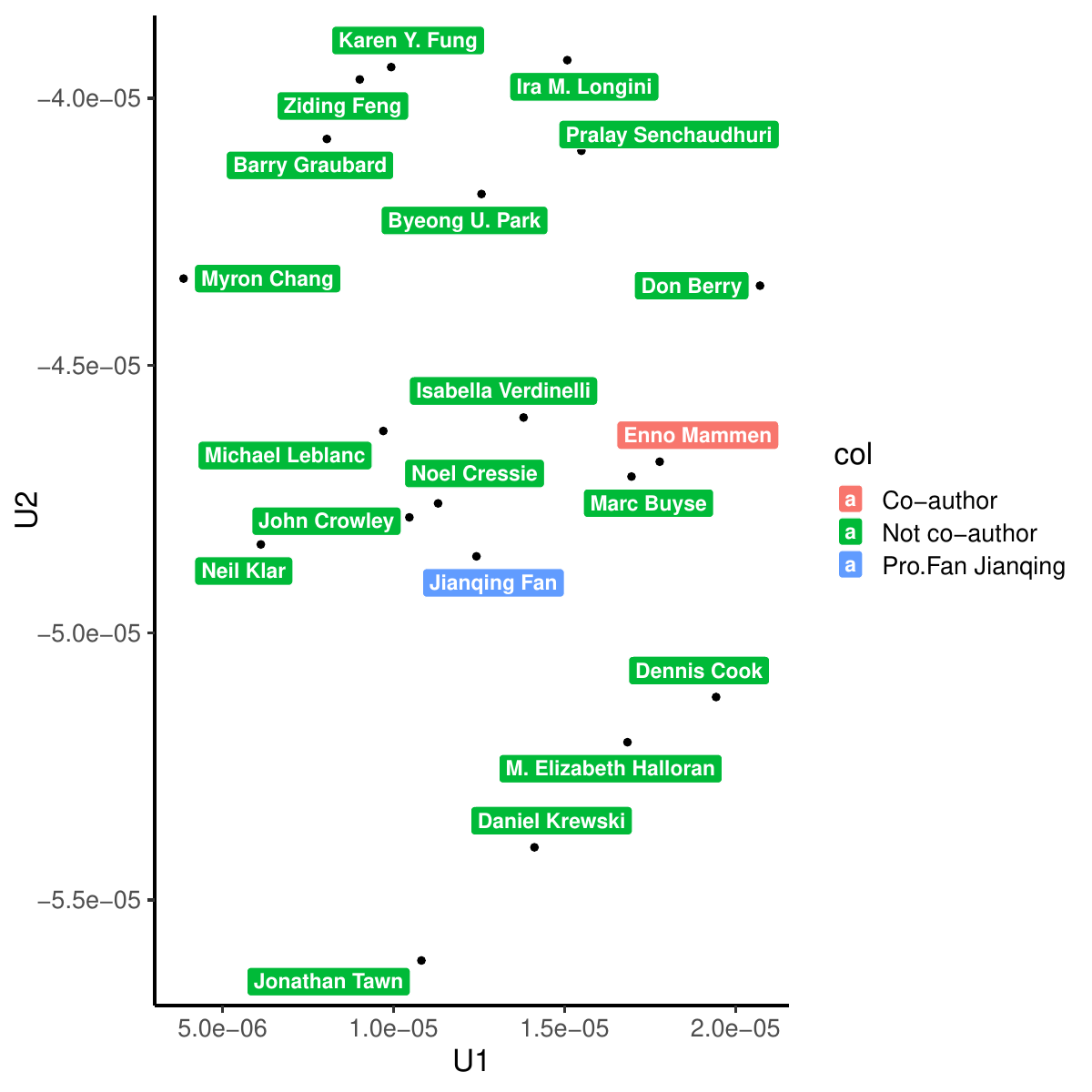}
	\end{minipage}
	\caption{The first two coordinates of the estimated latent homophily embedding vectors in the co-citation networks for Prof.~Jianqing Fan and his 19 nearest neighbors. Left panel: $\hat{\bH}_1$, the estimated latent homophily effect associated with edge formation. Right panel: $\hat{\bH}_2$, the estimated latent homophily effect associated with edge dissolution.}
	\label{fig-yao-c}
\end{figure}

\begin{figure}[ht]
	\begin{minipage}[b]{0.5\textwidth} 
		\centering 
		\includegraphics[width=0.9\textwidth]{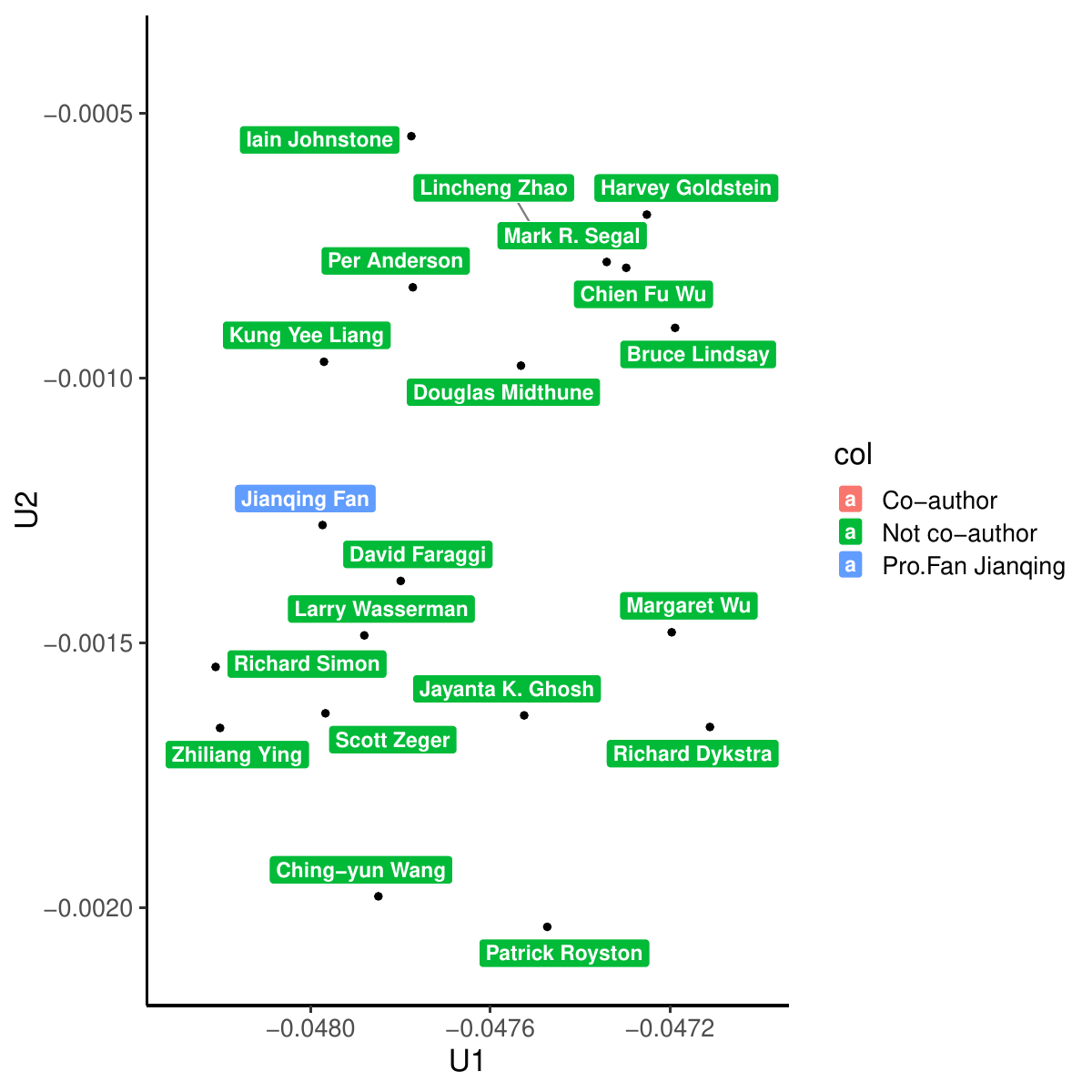}
	\end{minipage}
	\begin{minipage}[b]{0.5\textwidth} 
		\centering 
		\includegraphics[width=0.9\textwidth]{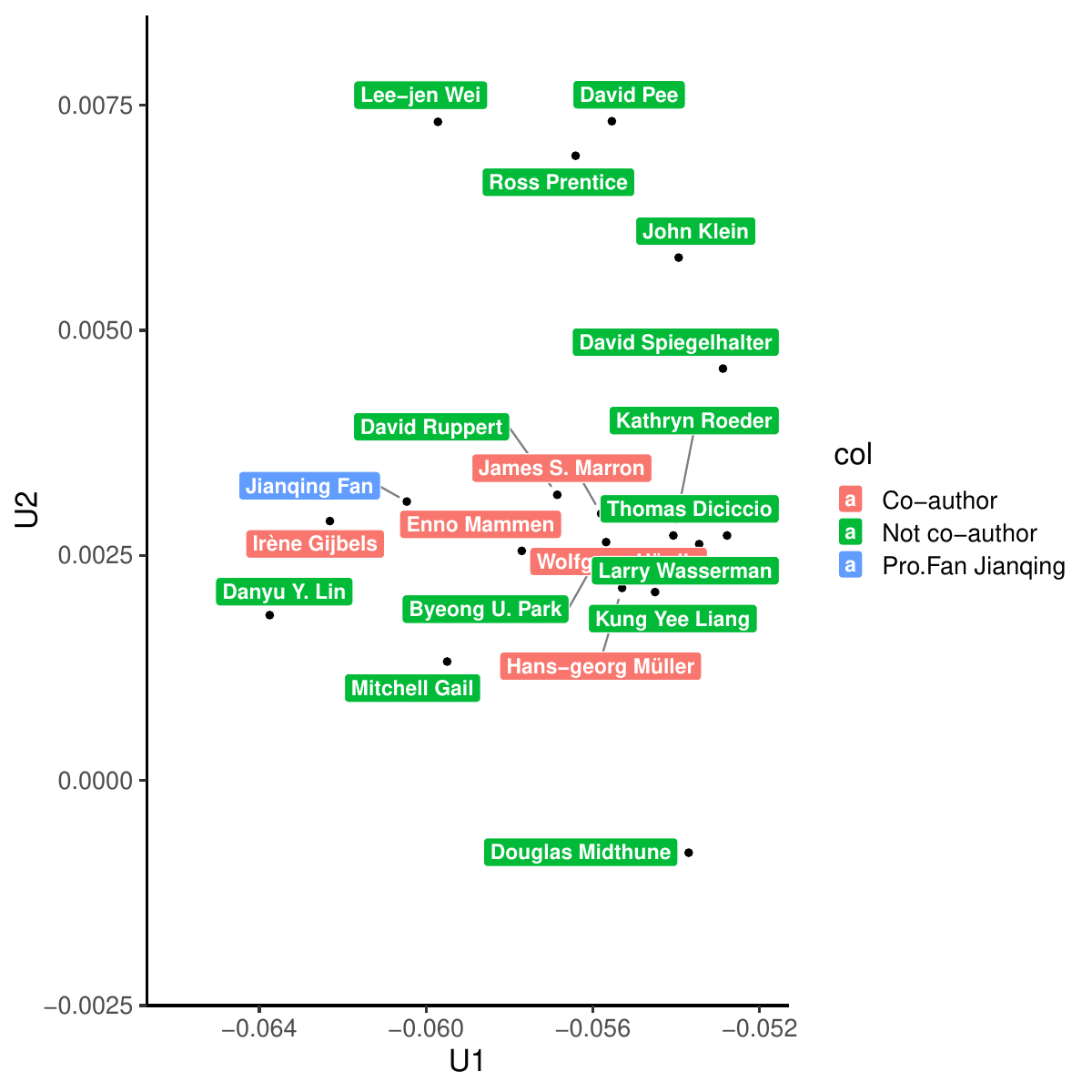}
	\end{minipage}
	\caption{The first two coordinates of the estimated latent homophily embedding vectors in the co-authorship networks for Prof.~Jianqing Fan and his 19 nearest neighbors. Left panel: $\hat{\bH}_1$, the estimated latent homophily effect associated with edge formation. Right panel: $\hat{\bH}_2$, the estimated latent homophily effect associated with edge dissolution. The authors highlighted in red in the right panel are further examined in Table~\ref{tablefan}.}
	\label{fig-fan}
\end{figure}

\begin{table}[ht] 
	\centering
	\begin{tabular}{|c|c|c|}
		\hline
		Name & Last co-authorship year & $\#$ co-authored papers in MADStat \\ \hline
		Irène Gijbels & 2001 & 10\\ \hline
		Enno Mammen & 1998 & 1\\ \hline
		James S. Marron & 1997 & 2\\ \hline
		Wolfgang Härdle & 1998 & 1\\ \hline
		Hans-Georg Müller & 2010 & 1\\ \hline
	\end{tabular}
	\caption{Most recent year of collaboration with Prof.~Jianqing Fan and the total number of co-authored papers in the MADStat dataset for the authors highlighted in red in Figure~\ref{fig-fan}.}
	\label{tablefan}
\end{table}

\begin{figure}[ht!]
	\includegraphics[width=0.9\textwidth]{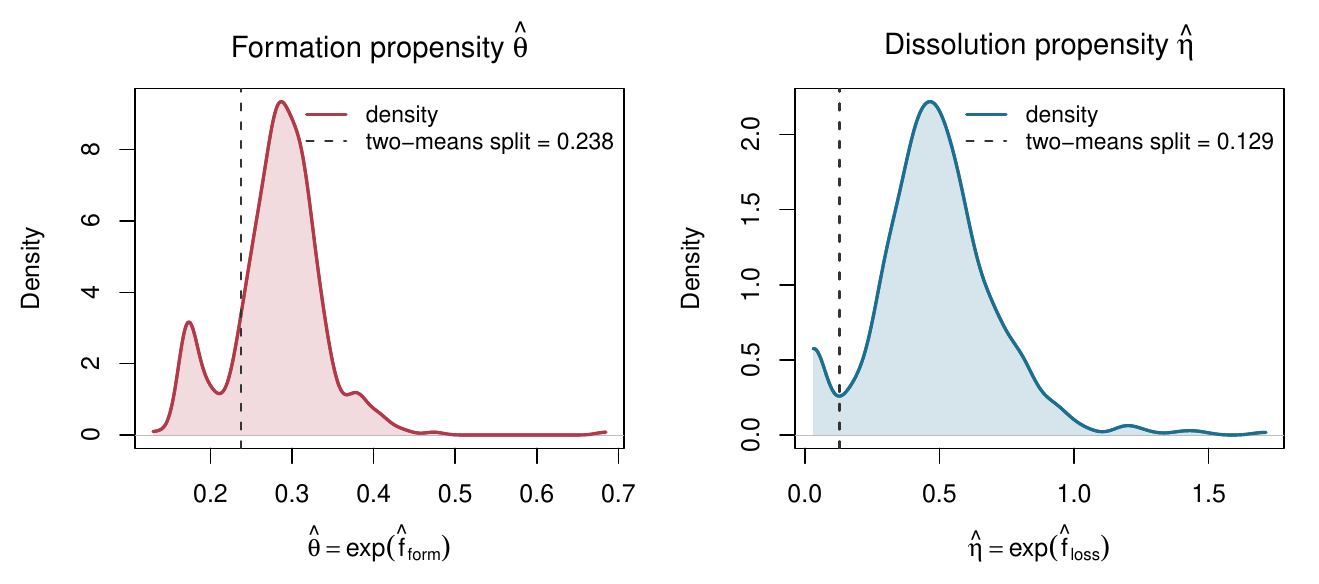}
	\caption{Densities of the estimated edge-formation propensity parameters $\hat{\theta}_i$'s and the estimated edge-formation propensity parameters $\hat{\eta}_i$'s in the co-authorship network. The dashed vertical
  lines give the high/low splits obtained from two-means clustering on the
  log-scale estimates.}
	\label{fig:density}
\end{figure}

Figure~\ref{fig:density} shows the estimated probability densities of the elements in
$\hat{\btheta}$ and $\hat{\bfeta}$ for the co-authorship networks. Under our model, a larger
$\theta_i$ indicates a higher probability of forming a new edge for node $i$, and a larger $\eta_i$ indicates a higher probability of losing an existing edge. 
Both density curves show clear heterogeneity among the authors. In particular, the estimated edge-formation propensity parameters $\hat{\theta}_i$'s have a lower  mode and a
higher mode, and the distribution of  $\hat{\eta}_i$'s separates a small lower-valued group from the majority individuals.
By two-means clustering, we divide authors into high and low groups separately based on  $\hat{\theta}_i$ and
$\hat{\theta}_i$. Crossing the two labels leads to the  four subgroups in the Table \ref{tab:theta_eta}.
\begin{table}[htbp]
\centering
\caption{Number of authors classified by high and low values of $\hat{\theta}_i$ and $\hat{\eta}_i$.}
\label{tab:theta_eta}
\begin{tabular}{c|cc}
 & High $\hat{\eta}_i$ & Low $\hat{\eta}_i$ \\
\hline
High $\hat{\theta}_i$ & 368 authors & 1 author \\
Low $\hat{\theta}_i$  & 46 authors  & 31 authors
\end{tabular}
\end{table}
The high-$\hat{\theta}_i$/high-$\hat{\eta}_i$ group contains those who are keen to form new co-authorship, but are unable to maintain sustained collaboration. It contains a majority of 368 individuals.
The high-$\hat{\theta}_i$/low-
$\hat{\eta}_i$ group consists of those who are keen to form new co-authorship but also manage to maintain the existing collaborations. There is only 1 individual in this group.
The low-$\hat{\theta}_i$/high-
$\hat{\eta}_i$ group contains 46 individuals who may prefer to work on their own. The low-$\hat{\theta}_i$/low-$\hat{\eta}_i$ group contains 31 individuals who are keen to keep the existing relationships but less likely to form new ones.

This finding is broadly consistent with previous studies of scientific
collaboration networks, which emphasize strong heterogeneity, clustering, and
network growth through cumulative advantage or preferential attachment
\citep{newman2001structure,barabasi2002evolution}. Our analysis adds a
dynamic interpretation, i.e. the heterogeneity is not only in the difference of the numbers of collaborators, but is also reflected by the propensities of forming new  collaborations or dissolving existing ones.

\subsubsection{High school contact dataset.}
We further applied the proposed method to the 
contact data collected in a
high school in Marseilles, France \citep{mastrandrea2015contact}.
The data are the recorded face-to-face contacts among the students from 9 classes during five
days in December 2013. We label the 3 classes majored in mathematics and physics as MP1, MP2 and MP3, the 3 classes majored
in biology as BIO1, BIO2 and BIO3, the 2 classes majored in physics and chemistry as PC1
and PC2, and the class majored in engineering as EGI.
Since the data do not contain overnight contact records, we
aggregated the observations into one-hour networks and counted transitions only
between consecutive hourly networks within the same day, i.e. we treat the hourly networks on each day as a separate time series. 
After
removing two students who appear in the metadata but never appear in the
proximity records, the retained sample contains \(327\) students and \(41\)
hourly networks. These \(41\) networks give \(36\) within-day one-hour
transitions. We set $X_{i,j}^t =1$ if individuals $i$ and $j$ have at least one recorded face-to-face contact during hour $t$, and 0 otherwise.

The dyadic covariate vector \(\bZ_{i,j}\) is constructed from the student metadata
and the three additional survey-based relation files, containing the information on demographic/school-organization  and  social-ties. More specifically,  \(\bZ_{i,j}=\bZ_{j,i}\) for $i\neq j$, and
\begin{align*}
\bZ_{i,j}
 =& \;
\bigl(
1,\ 
\mathbf 1\{c_i=c_j\},\
\mathbf 1\{r_i=r_j\},\
\mathbf 1\{g_i \text{ and } g_j \text{ are both observed}\},\
\mathbf 1\{g_i=g_j\},\\ 
& \;\mathbf 1\{ i \text{ and } j 
\text{ are friends}\},\
\mathbf 1\{ i \text{ and } j 
\text{ are connected on Facebook}\},\,\\
& \; \mathbf 1\{ (i, j) 
\text{  appears in the diary file}\}
\bigr)^{\top},
\end{align*}
where \(c_i\), \(r_i\), and \(g_i\) denote, respectively, the observed class label, the academic track (in total 4 different tracks: biology, mathematics and physics, physics and chemistry, and engineer), gender of student \(i\). We take the convention $\mathbf 1\{g_i=g_j\}=0$ if at least one of $g_i$ andd $g_j$ is not observed.

\begin{figure}[!htbp]
\centering
\includegraphics[width=0.9\textwidth]{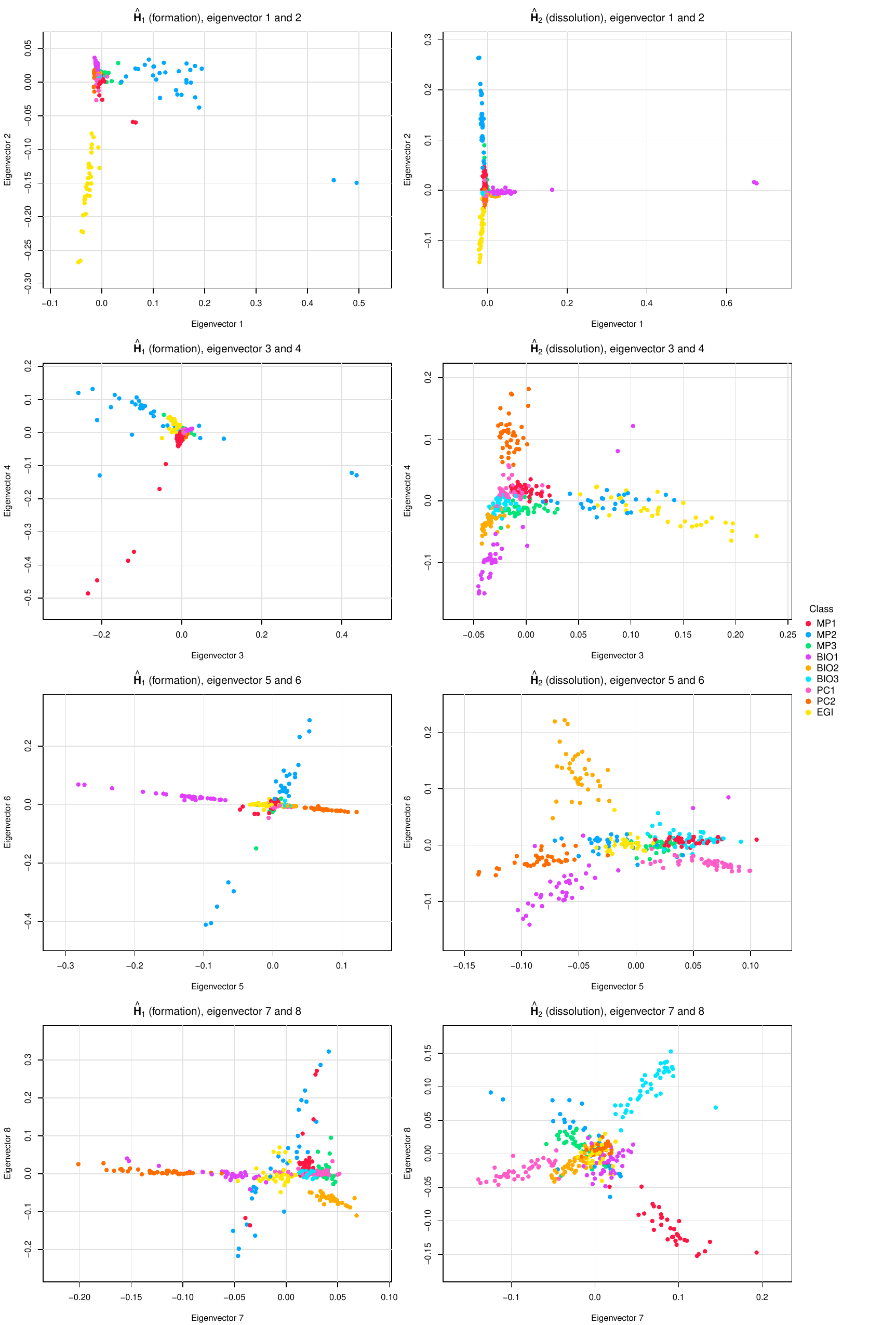}
\caption{Spectral embeddings of \(\widehat{\bH}_1\) and
\(\widehat{\bH}_2\) for the high school hourly contact networks.}
\label{fig:hs2013_embedding_all}
\end{figure}

Figure~\ref{fig:hs2013_embedding_all} visualizes the estimated latent homophily matrices through several pairs of eigenvectors. In each row, the left panel corresponds to \(\widehat{\bH}_1\), which is associated with \(0\to1\) edge formation, and the right panel corresponds to \(\widehat{\bH}_2\), which is associated with \(1\to0\) edge dissolution. The colors indicate the 9 observed class labels, which are used only for visual comparison.

The embeddings show that the estimated latent homophily structure is closely related to the observed school classes. The first two eigenvectors already separate some classes, such as EGI, BIO1, and MP2, from the main concentration of students. The higher eigenvector pairs reveal additional class-related patterns that are less visible in the first two coordinates. In particular, several classes form distinct branches or local concentrations in \(\widehat{\bH}_2\), suggesting that the dissolution-related latent homophily matrix contains especially clear class-structured variation. At the same time, some overlap remains across classes in each two-dimensional projection, so these plots should be interpreted as low-dimensional views of a richer latent space rather than as a complete clustering result.

To examine this structure more systematically, we next use eigen-ratio selection and \(k\)-means clustering on a higher-dimensional spectral representation. 
The eigen-ratio criterion selects the first eight eigenvectors of \(\bD_{\hat{f}_{1}}\widehat{\bH}_1\bD_{\hat{f}_{1}}\) and the first eight eigenvectors of \(\bD_{\hat{f}_{2}}\widehat{\bH}_2\bD_{\hat{f}_{2}}\). We concatenate these two row-normalized spectral representations and apply \(k\)-means clustering over candidate values of \(K\). The clustering criteria select \(K=9\), which matches the true
number of classes in the School. The resulting clusters are the 9 true classes, with $\mathrm{ARI}=\mathrm{NMI}=\mathrm{Purity}=1$. 

This finding should be interpreted together with the construction of \(\bZ_{i,j}\). The same-class and same-track indicators are already included in \(\bZ_{i,j}\), so part of the school-organization effect is explicitly modeled as observed homophily. The strong class agreement in the latent representation therefore suggests that these observed indicators do not fully absorb the class-structured variation in the temporal contact dynamics. In addition, the estimated coefficients of the observed dyadic covariates are very similar across covariates and between the formation and dissolution transition matrices. This indicates that the observed covariates mainly encode stable pair-level background information, rather than producing a strong mechanism-specific separation between \(0\to1\) and \(1\to0\) transitions. The remaining class-structured variation is instead captured by the latent homophily matrices.

From the perspective of covariate-assisted community detection, the latent
homophily matrices therefore define a conditional notion of community. They do not replace the observed covariates in \(\bZ_{i,j}\), but capture structured similarities in edge dynamics that remain after accounting for the measured dyadic homophily effects. From the perspective of dynamic network analysis, these communities are mechanism-specific: they reflect similarity in the formation and dissolution of ties, rather than only similarity in static edge density. The  analysis of this data set therefore supports the use of both observed homophily and latent homophily in modeling dynamic networks.

\section{Summary and Discussion}\label{sec: conclusion}
We proposed a dynamic network model that jointly captures node heterogeneity and link homophily, where homophily may arise from both observed covariates and unobserved latent traits. By distinguishing edge formation from edge dissolution, the model provides a more flexible description of network evolution than those based only on a static edge probability. Methodologically, the normalized squared loss and the associated reparameterization lead to a tractable estimation framework for high-dimensional node-specific effects, and also make it possible to establish uniform consistency when the number of nodes diverges.
An important feature of the proposed normalized squared estimation framework is that it can also be easily adapted to other scenarios such as directed networks, weighted networks and hypergraphs. 

Several directions deserve further investigation. First, it would be important to extend the framework beyond first-order dynamics to allow for richer temporal dependence and possible nonstationarity. Second, while our loss-based approach can be extended to more general settings, a fuller treatment of edge dependence, directed networks, and multiplex networks remains to be developed. Third, future work should address statistical inference, including uncertainty quantification and testing procedures, as well as data-driven selection of tuning parameters and latent dimensions.

\renewcommand{\baselinestretch}{1}
\normalsize	
\bibliographystyle{apalike}
\bibliography{reference}

@article{mastrandrea2015contact,
  title   = {Contact Patterns in a High School: A Comparison between Data
             Collected Using Wearable Sensors, Contact Diaries and
             Friendship Surveys},
  author  = {Mastrandrea, Rossana and Fournet, Julie and Barrat, Alain},
  journal = {PLoS ONE},
  volume  = {10},
  number  = {9},
  pages   = {e0136497},
  year    = {2015},
  doi     = {10.1371/journal.pone.0136497}
}

@article{newman2001structure,
  title={The structure of scientific collaboration networks},
  author={Newman, Mark EJ},
  journal={Proceedings of the national academy of sciences},
  volume={98},
  number={2},
  pages={404--409},
  year={2001},
  publisher={The National Academy of Sciences}
}

@article{barabasi2002evolution,
  title={Evolution of the social network of scientific collaborations},
  author={Barab{\^a}si, Albert-Laszlo and Jeong, Hawoong and N{\'e}da, Zoltan and Ravasz, Erzsebet and Schubert, Andras and Vicsek, Tamas},
  journal={Physica A: Statistical mechanics and its applications},
  volume={311},
  number={3-4},
  pages={590--614},
  year={2002},
  publisher={Elsevier}
}

@article{athreya2018statistical,
  title={Statistical inference on random dot product graphs: a survey},
  author={Athreya, Avanti and Fishkind, Donniell E and Tang, Minh and Priebe, Carey E and Park, Youngser and Vogelstein, Joshua T and Levin, Keith and Lyzinski, Vince and Qin, Yichen and Sussman, Daniel L},
  journal={Journal of Machine Learning Research},
  volume={18},
  number={226},
  pages={1--92},
  year={2018}
}

@article{jiang2025two,
  title={A two-way heterogeneity model for dynamic networks},
  author={Jiang, Binyan and Leng, Chenlei and Yan, Ting and Yao, Qiwei and Yu, Xinyang},
  journal={The Annals of Statistics},
  volume={53},
  number={6},
  pages={2617--2641},
  year={2025},
  publisher={Institute of Mathematical Statistics}
}

@article{xie2022eigenvector,
  title={Eigenvector-Assisted Statistical Inference for Signal-Plus-Noise Matrix Models},
  author={Xie, Fangzheng and Wu, Dingbo},
  journal={arXiv preprint arXiv:2203.16688},
  year={2022}
}

@article{xie2024eigenvector,
  title={An eigenvector-assisted estimation framework for signal-plus-noise matrix models},
  author={Xie, Fangzheng and Wu, Dingbo},
  journal={Biometrika},
  volume={111},
  number={2},
  pages={661--676},
  year={2024},
  publisher={Oxford University Press}
}

@article{xie2023efficient,
  title={Efficient estimation for random dot product graphs via a one-step procedure},
  author={Xie, Fangzheng and Xu, Yanxun},
  journal={Journal of the American Statistical Association},
  volume={118},
  number={541},
  pages={651--664},
  year={2023},
  publisher={Taylor \& Francis}
}

@article{chi2019nonconvex,
  title={Nonconvex optimization meets low-rank matrix factorization: An overview},
  author={Chi, Yuejie and Lu, Yue M and Chen, Yuxin},
  journal={IEEE Transactions on Signal Processing},
  volume={67},
  number={20},
  pages={5239--5269},
  year={2019},
  publisher={IEEE}
}

@article{chen2015incoherence,
  title={Incoherence-optimal matrix completion},
  author={Chen, Yudong},
  journal={IEEE Transactions on Information Theory},
  volume={61},
  number={5},
  pages={2909--2923},
  year={2015},
  publisher={IEEE}
}

@article{candes2011robust,
  title={Robust principal component analysis?},
  author={Cand{\`e}s, Emmanuel J and Li, Xiaodong and Ma, Yi and Wright, John},
  journal={Journal of the ACM (JACM)},
  volume={58},
  number={3},
  pages={1--37},
  year={2011},
  publisher={ACM New York, NY, USA}
}

@article{candes2012exact,
  title={Exact matrix completion via convex optimization},
  author={Candes, Emmanuel and Recht, Benjamin},
  journal={Communications of the ACM},
  volume={55},
  number={6},
  pages={111--119},
  year={2012},
  publisher={ACM New York, NY, USA}
}

@article{yan2021covariate,
  title={Covariate regularized community detection in sparse graphs},
  author={Yan, Bowei and Sarkar, Purnamrita},
  journal={Journal of the American Statistical Association},
  volume={116},
  number={534},
  pages={734--745},
  year={2021},
  publisher={Taylor \& Francis}
}

@article{chang2024autoregressive,
  title={Autoregressive Networks with Dependent Edges},
  author={Chang, Jinyuan and Fang, Qin and Kolaczyk, Eric D and MacDonald, Peter W and Yao, Qiwei},
  journal={Journal of the Royal Statistical Society Series B, in press},
  year={2026}
}

@article{hu2024network,
  title={Network-adjusted covariates for community detection},
  author={Hu, Yaofang and Wang, Wanjie},
  journal={Biometrika},
  pages={asae011},
  year={2024},
  publisher={Oxford University Press}
}

@article{ji2022co,
  title={Co-citation and Co-authorship Networks of Statisticians},
  author={Ji, Pengsheng and Jin, Jiashun and Ke, Zheng Tracy and Li, Wanshan},
  journal={Journal of Business \& Economic Statistics},
  volume={40},
  number={2},
  pages={469--485},
  year={2022},
  publisher={Taylor \& Francis}
}

@article{li2023statistical,
	title={Statistical Inference on Latent Space Models for Network Data},
	author={Li, Jinming and Xu, Gongjun and Zhu, Ji},
	journal={arXiv preprint arXiv:2312.06605},
	year={2023}
}

@article{yu2015useful,
	title={A useful variant of the Davis--Kahan theorem for statisticians},
	author={Yu, Yi and Wang, Tengyao and Samworth, Richard J},
	journal={Biometrika},
	volume={102},
	number={2},
	pages={315--323},
	year={2015},
	publisher={Oxford University Press}
}

@article{mei2012encoding,
	title={Encoding low-rank and sparse structures simultaneously in multi-task learning},
	author={Mei, Shike and Cao, Bin and Sun, Jiantao},
	journal={Advances in Neural Information Processing Systems (NIPS)},
	pages={1--16},
	year={2012},
	publisher={Curran Associates, Inc. Red Hook, NY, USA}
}

@article{hoff2008,
	author = {Hoff, P.},
	title ={Modeling homophily and stochastic equivalence in symmetric relational data},
	journal ={Advances in Neural Information Processing Systems},
	year = {2008},
	volume={20},
	publisher={MIT Press},
	pages={657-664},
}

@article{MLC2001,
	title={BIRDS OF A FEATHER: Homophily in Social Networks},
	author={Miller McPherson and Lynn Smith-Lovin and James M Cook},
	journal={Annu. Rev. Sociol.
	},
	year={2001},
	volume={27},
	pages={415-444},
}

@article{graham2016,
	author = {Graham, B.S.},
	title ={Homophily and Transitivity in Dynamic Network Formation},
	journal ={NBER Working Paper 22186},
	year ={2016}
}

@article{Bickel2009,
	title={A nonparametric view of network models and Newman--Girvan and other modularities},
	author={Bickel, Peter J and Chen, Aiyou},
	journal={Proceedings of the National Academy of Sciences},
	volume={106},
	number={50},
	pages={21068--21073},
	year={2009},
	publisher={National Acad Sciences}
}

@article{Borsboom21,
	title={Network analysis of multivariate data in psychological science},
	author={Borsboom, Denny and Deserno, Marie K and Rhemtulla, Mijke and Epskamp, Sacha and Fried, Eiko I and McNally, Richard J and Robinaugh, Donald J and Perugini, Marco and Dalege, Jonas and Costantini, Giulio and others},
	journal={Nature Reviews Methods Primers},
	volume={1},
	number={1},
	pages={1--18},
	year={2021},
	publisher={Nature Publishing Group}
}

@article{wiuf2006likelihood,
	title={A likelihood approach to analysis of network data},
	author={Wiuf, Carsten and Brameier, Markus and Hagberg, Oskar and Stumpf, Michael PH},
	journal={Proceedings of the National Academy of Sciences},
	volume={103},
	number={20},
	pages={7566--7570},
	year={2006},
	publisher={National Acad Sciences}
}

@article{chen2021analysis,
	title={Analysis of networks via the sparse $\beta$-model},
	author={Chen, Mingli and Kato, Kengo and Leng, Chenlei},
	journal={Journal of the Royal Statistical Society: Series B (Statistical Methodology)},
	year={2021},
	publisher={Wiley Online Library}
}

@article{fienberg2012brief,
	title={A brief history of statistical models for network analysis and open challenges},
	author={Fienberg, Stephen E},
	journal={Journal of Computational and Graphical Statistics},
	volume={21},
	number={4},
	pages={825--839},
	year={2012},
	publisher={Taylor \& Francis}
}

@article{goldenberg2010survey,
	title={A survey of statistical network models},
	author={Goldenberg, Anna and Zheng, Alice X and Fienberg, Stephen E and Airoldi, Edoardo M and others},
	journal={Foundations and Trends{\textregistered} in Machine Learning},
	volume={2},
	number={2},
	pages={129--233},
	year={2010},
	publisher={Now Publishers, Inc.}
}

@book{Graham20,
	title={The Econometric Analysis of Network Data},
	author={Graham, Bryan and De Paula, Aureo},
	year={2020},
	publisher={Academic Press}
}

@article{chatterjee2011,
	title={Random graphs with a given degree sequence},
	author={Chatterjee, Sourav and Diaconis, Persi and Sly, Allan and others},
	journal={The Annals of Applied Probability},
	volume={21},
	number={4},
	pages={1400--1435},
	year={2011},
	publisher={Institute of Mathematical Statistics}
}

@book{ek17,
	title={Topics at the Frontier of Statistics and Network
	Analysis},
	author={Kolaczyk, Eric D},
	year={2017},
	publisher={Cambridge University Press}
}

@article{karrer2011stochastic,
	title={Stochastic blockmodels and community structure in networks},
	author={Karrer, Brian and Newman, Mark EJ},
	journal={Physical review E},
	volume={83},
	number={1},
	pages={016107},
	year={2011},
	publisher={APS}
}

@article{yan2019statistical,
	title={Statistical inference in a directed network model with covariates},
	author={Yan, Ting and Jiang, Binyan and Fienberg, Stephen E and Leng, Chenlei},
	journal={Journal of the American Statistical Association},
	volume={114},
	number={526},
	pages={857--868},
	year={2019},
	publisher={Taylor \& Francis}
}

@article{ma2020universal,
	title={Universal latent space model fitting for large networks with edge covariates},
	author={Ma, Zhuang and Ma, Zongming and Yuan, Hongsong},
	journal={Journal of Machine Learning Research},
	volume={21},
	number={4},
	pages={1--67},
	year={2020}
}

@article{graham2017econometric,
	title={An econometric model of network formation with degree heterogeneity},
	author={Graham, Bryan S},
	journal={Econometrica},
	volume={85},
	number={4},
	pages={1033--1063},
	year={2017},
	publisher={Wiley Online Library}
}

@article{ANJiang,
	title={Autoregressive networks},
	author={Jiang, Binyan and Li, Jailing and Yao, Qiwei},
	journal={Journal of Machine Learning Research},
	year={2023},
	volume={24(227)},
	pages={1-69}
}

@article{stein2021sparse,
	title={A Sparse Random Graph Model for Sparse Directed Networks},
	author={Stein, Stefan and Leng, Chenlei},
	journal={arXiv preprint arXiv:2108.09504},
	year={2021}
}
\newpage
\begin{appendix}
\section{Appendix A}
Throughout the appendix, we use $\f^{*}$, $\bbeta^{*}$, and $\bH^{*}$ to represent the ground-truth parameters ($\f$, $\bbeta$, and $\bH_{1}$ in Section \ref{sec: parameter estimation}), while $\f$, $\bbeta$, and $\bH$ represent arbitrary parameters.
\subsection{Proof of Proposition \ref{Hessian_inf} and Proposition \ref{Hessian_inf_H}(i)} 
\begin{proof}
The proof for Proposition \ref{Hessian_inf} is the same as that of Proposition \ref{Hessian_inf_H}(i) with the latent homophily component set to be $1$. We only provide the proof for Proposition \ref{Hessian_inf_H}(i) below. 
	Note that 
	\begin{eqnarray*}
		l_{1}\left(\f,\bbeta|\bA,\bH\right)&=&\frac{1}{4p}\sum_{1\leq i\neq j\leq p} \left( A_{i,j}e^{-f_{i}-f_{j}}-e^{f_{i}+f_{j}} \left( \bZ_{i,j}^{\top} \bbeta  +H_{i,j}\right)\right) ^2.
	\end{eqnarray*}
	The  first and second order partial derivatives of $l_1\left(\f,\bbeta|\bA,\bH\right)$ are given as:
	\begin{eqnarray*}
		\frac{\partial l_1\left(\f,\bbeta|\bA,\bH\right)}{\partial f_{i}}&=&\frac{1}{p}\sum_{j\neq i;j=1}^{p} \left[ e^{2(f_{i}+f_{j})} \left( \bZ_{i,j}^{\top} \bbeta +H_{i,j} \right)^2-  A_{i,j}^2e^{-2(f_{i}+f_{j})} \right], \\
		\frac{\partial^2 l_1\left(\f,\bbeta|\bA,\bH\right)}{\partial f_{i}f_{j}}&=& \frac{2}{p}\left( e^{2(f_{i}+f_{j})} \left( \bZ_{i,j}^{\top} \bbeta +H_{i,j} \right)^2+  A_{i,j}^2e^{-2(f_{i}+f_{j})}\right) ,\quad i\neq j;\\
		\frac{\partial^2 l_1\left(\f,\bbeta|\bA,\bH\right)}{\partial f_{i}^2}&=&\sum_{j\neq i;j=1}^{p}\frac{\partial^2 l}{\partial f_{i}f_{j}},\\
		\frac{\partial l_1\left(\f,\bbeta|\bA,\bH\right)}{\partial\bbeta}&=&\frac{1}{p} \sum_{1\leq i\neq j\leq p}  e^{2(f_{i}+f_{j})} \bZ_{i,j}\bZ_{i,j}^{\top} \bbeta-\left(A_{i,j}-e^{2(f_{i}+f_{j})} H_{i,j} \right)  \bZ_{i,j},\\ 
		\frac{\partial^2 l_1\left(\f,\bbeta|\bA,\bH\right)}{\partial\bbeta\bbeta^{\top}}&=& \frac{1}{p}\sum_{1\leq i\neq j\leq p}  e^{2(f_{i}+f_{j})} \bZ_{i,j}\bZ_{i,j}^{\top},\\
		\frac{\partial^2 l_1\left(\f,\bbeta|\bA,\bH\right)}{\partial\bbeta\partial f_{i}}&=&\frac{2}{p}\sum_{j\neq i;j=1}^{p}e^{2(f_{i}+f_{j})} \left( \bZ_{i,j}^{\top}\bbeta +H_{i,j}\right)\bZ_{i,j}  .
	\end{eqnarray*}		
	Denote the Hessian matrix of the loss function as 
	\[ \bV(\f,\bbeta|\bA,\bH)=
	\begin{bmatrix} 
		\frac{\partial^2{l_{1}(\f,\bbeta|\bA,\bH)}}{\partial{\f}\partial{\f^\top}} & \frac{\partial^2{l_{1}(\f,\bbeta|\bA,\bH)}}{\partial{\f}\partial{\bbeta^\top}} \\
		\frac{\partial^2{l_{1}(\f,\bbeta|\bA,\bH)}}{\partial{\bbeta}\partial{\f^\top}} &\frac{\partial^2{l_{1}(\f,\bbeta|\bA,\bH)}}{\partial{\bbeta}\partial{\bbeta^\top}} 
	\end{bmatrix} := \begin{bmatrix} 
		\bV_{1}(\f,\bbeta|\bA,\bH) & \bV_{2}(\f,\bbeta|\bA,\bH)^{\top}  \\
		\bV_{2}(\f,\bbeta|\bA,\bH) & \bV_{3} (\f,\bbeta|\bA,\bH)
	\end{bmatrix}.
	\]
	Consider $\inf_{\f,\bbeta}(\ba,\bb)^{\top}\bV(\f,\bbeta|\bA,\bH)(\ba,\bb)$ such that $\ba\in \mathbb{R}^{p}$, $\bb\in \mathbb{R}^{q}$.
	There exist constants $C_{1}>0$, $C_{2}>0$ such that, 
	\begin{eqnarray}\label{Hessian_low}
		&& (\ba,\bb)^{\top}\bV(\f,\bbeta|\bA,\bH)(\ba,\bb)\nonumber\\
		&=&\ba^{\top}\bV_{1}(\f,\bbeta|\bA,\bH) \ba +\bb^{\top}\bV_{3}(\f,\bbeta|\bA,\bH) \bb + 2\ba^{\top}\bV_{2}(\f,\bbeta|\bA,\bH)^{\top} \bb \nonumber\\
		&=&\frac{1}{p} \Bigg\{ \sum_{1\leq i\neq j\leq p} \left( e^{2(f_{i}+f_{j})} \left( \bZ_{i,j}^{\top} \bbeta+H_{i,j} \right)^2+  A_{i,j}^2e^{-2(f_{i}+f_{j})}\right)\left( a_{i}+a_{j}\right)^2+ \sum_{1\leq i\neq j\leq p}e^{2(f_{i}+f_{j})} \left(  \bZ_{i,j}^{\top} \bb \right) ^2\nonumber\\
		&&+\sum_{1\leq i\neq j\leq p}2\left(  a_{i}+a_{j} \right) e^{2(f_{i}+f_{j})} \left( \bZ_{i,j}^{\top}\bbeta+H_{i,j} \right)\bZ_{i,j}^{\top}\bb \Bigg\}\nonumber\\
		&=&\frac{1}{p} \Bigg\{\sum_{1\leq i\neq j\leq p}e^{2(f_{i}+f_{j})}\left[  \bZ_{i,j}^{\top} \bb +  \left( \bZ_{i,j}^{\top}\bbeta +H_{i,j}\right)\left( a_{i}+a_{j}\right) \right]^2\nonumber +A_{i,j}^2e^{-2(f_{i}+f_{j})}\left( a_{i}+a_{j}\right)^2 \Bigg\}\nonumber\\
		&=&\frac{1}{p} \Bigg\{\sum_{1\leq i\neq j\leq p}\left( A_{i,j}^2e^{-2(f_{i}+f_{j})}-D_{i,j}^2\right) \left( a_{i}+a_{j}\right)^2+\frac{e^{2(f_{i}+f_{j})} D_{i,j}^2}{e^{2(f_{i}+f_{j})} \left( \bZ_{i,j}^{\top} \bbeta +H_{i,j} \right)^2+D_{i,j}^2}\left(  \bZ_{i,j}^{\top} \bb \right) ^2\nonumber\\
		&&+\Bigg[ \frac{e^{2(f_{i}+f_{j})} \left( \bZ_{i,j}^{\top} \bbeta +H_{i,j} \right)}{ \sqrt{e^{2(f_{i}+f_{j})} \left( \bZ_{i,j}^{\top} \bbeta +H_{i,j} \right)^2+D_{i,j}^2}} \bZ_{i,j}^{\top} \bb +  \sqrt{e^{2(f_{i}+f_{j})} \left( \bZ_{i,j}^{\top}\bbeta+H_{i,j} \right)^2+D_{i,j}^2} \left( a_{i}+a_{j}\right) \Bigg]^2 \Bigg\}\nonumber\\
		&\geq&\frac{1}{p}\max_{D_{i,j}} \Bigg\{\sum_{1\leq i\neq j\leq p}\left( A_{i,j}^2e^{-2(f_{i}+f_{j})}-D_{i,j}^2\right) \left( a_{i}+a_{j}\right)^2+\frac{ e^{2(f_{i}+f_{j})}D_{i,j}^2}{e^{2(f_{i}+f_{j})} \left( \bZ_{i,j}^{\top} \bbeta +H_{i,j} \right)^2+D_{i,j}^2}\left(  \bZ_{i,j}^{\top} \bb \right) ^2 \Bigg\}\nonumber\\
		&\geq&\frac{1}{p}\max_{\{D_{i,j}: D^2_{i,j}< A_{i,j}^2e^{-2(f_{i}+f_{j})}\}} \Bigg\{2(p-2)\|\ba\|_{2}^2\inf_{1\leq i \neq j \leq p}\left( A_{i,j}^2e^{-2(f_{i}+f_{j})}-D_{i,j}^2\right)\nonumber\\
		&&+\|\bb\|_{2}^2\min_{\|\bx\|_{2}=1}\sum_{1\leq i\neq j\leq p}\frac{e^{2(f_{i}+f_{j})} D_{i,j}^2}{e^{2(f_{i}+f_{j})} \left( \bZ_{i,j}^{\top} \bbeta+H_{i,j} \right)^2+D_{i,j}^2} \left(  \bZ_{i,j}^{\top}\bx\right) ^2  \Bigg\}\nonumber\\
		&\geq& C \min_{1\le i\ne j\le p}A_{i,j}^2\left( \|\ba\|_{2}^2+p\| \bb\|_{2}^2\right) e^{-2\|\f\|_\infty},
	\end{eqnarray}
	for some small enough constant $C\ge 0$. Here in the last two inequality, we have used the fact that
	$\sum_{1\le i\ne j\le p}(a_i+a_j)^2= 2{\ba}^\top ((p-2)I_p+{\bf 1}_{p,1}{\bf 1}_{p,1}^\top){\ba}\ge 2(p-1)\|\ba\|_2^2$, and the last two inequalities are valid  by setting $D_{i,j}^2= \frac{1}{2}A_{i,j}^2\min\{ e^{-2(f_i+f_j)}, e^{2(f_i+f_j)}\}$ and using the fact that $\frac{e^{2(f_{i}+f_{j})} D_{i,j}^2}{e^{2(f_{i}+f_{j})} ( \bZ_{i,j}^{\top} \bbeta+H_{i,j} )^2+D_{i,j}^2}\ge \frac{A_{i,j}^2e^{-2 \|\f\|_\infty}} {2( \bZ_{i,j}^{\top} \bbeta+H_{i,j} )^2+ A_{i,j}^2 }$ and Condition (C1).
\end{proof}
 
\subsection{Proof of Theorem \ref{thm1}}\label{thm1_p}
\begin{proof}
	Denote
	\begin{align*}
		l_{p}\left(\f,\bbeta\right)&:=l\left(\f,\bbeta| \hat{\bA},\bH^{(0)} \right)=\frac{1}{2p}\sum_{1\leq i\neq j\leq p} \left( \hat{A}_{i,j}e^{-f_{i}-f_{j}}-e^{f_{i}+f_{j}}  \left( \bZ_{i,j}^{\top} \bbeta+H^{(0)}_{i,j}  \right)\right) ^2 ,\\
		l_{E}\left(\f,\bbeta\right) &:=l\left(\f,\bbeta|\bA,\bH\right)=\frac{1}{2p}\sum_{1\leq i\neq j\leq p} \left( A_{i,j}e^{-f_{i}-f_{j}}-e^{f_{i}+f_{j}}  \left( \bZ_{i,j}^{\top} \bbeta  +H_{i,j}^* \right)\right) ^2,
	\end{align*}
	and
	\begin{eqnarray}\label{l_d}
		l_{\Delta}\left(\f,\bbeta\right)
		:=& l_{ p}\left(\f^*, \bbeta^* \right)- l_{p}\left(\f,\bbeta\right)   -\left(l_{E}\left( \f^*, \bbeta^* \right)- l_{E}\left(\f, \bbeta \right)\right).
	\end{eqnarray}
	Define the $\ell_\infty$ ball centered at $\bx$ with radius $r$ as $\bB_\infty(\bx,r) = \{\by : \|\by-\bx\|_\infty \le r\}.$ With some abuse of notations, let  $\hat{\bf f}=(\hat{f}_1,\ldots, \hat{f}_p)^\top$ and $\hat{\bbeta}$ be the minimizer of $l\left(\cdot|\hat{\bA},\bH^{(0)}\right)$ with $(\f,\beta)$ constrained on the set $\{(\f,\beta): \f\in \bB_{\infty}\left( \f^*,\alpha_{0}\right), \beta\in \bB_{\infty}\left( \beta^*,\alpha_{0}\right) \}$, where  the radius $\alpha_{0}$ is a big enoiugh positive constant to be defined later.  By Proposition \ref{Hessian_inf_H}, we have that $\left(\f^*,\bbeta^*\right)$ is the minimizer of $l_{E}$ with $l_{E}\left(\f^*, \bbeta^* \right)=0$. Moreover, from  Proposition 2,  there exists a constant $C_{1}>0$ such that  
	\begin{eqnarray}\label{low}
		l_{\Delta}\left(\hat{\f},\hat{\bbeta}\right)
		&=&  l_{ p}\left(\f^*, \bbeta^* \right)- l_{p}\left( \hat{\f},\hat{\bbeta}\right)   -\left(l_{E}\left( \f^*, \bbeta^* \right)- l_{E}\left( \hat{\f},\hat{\bbeta}\right)\right)\nonumber\\
		&\geq&l_{E}\left( \hat{\f},\hat{\bbeta}\right)-l_{E}\left( \f^*,\bbeta^* \right)       \nonumber \\  
		&\geq&  C_{1}e^{-2\|\f^*\|_{\infty}} \left( \|\hat{\f}-\f^*\|_{2}^{2}+\|\sqrt{p}\left( \hat{\bbeta}-\bbeta^*\right) \|_{2}^{2}\right).
	\end{eqnarray}

	On the other hand, we have 
	\begin{eqnarray}\label{L1234}
		&&l_{\Delta}\left(\f,\bbeta\right)\nonumber\\
		&\leq&\frac{1}{2p}\Bigg| \sum_{1\leq i\neq j\leq p} \left( e^{-2f_{i}-2f_{j}}- e^{-2f_{i}^*-2f_{j}^*} \right)  \left(\hat{A}_{i,j}^2-\left( A_{i,j}\right) ^2 \right)  - 2\left(\hat{A}_{i,j}-A_{i,j} \right) \bZ_{i,j}^{\top}\left( \bbeta -\bbeta^* \right)\nonumber\\
		&&+2\left( e^{2f_{i}+2f_{j}}\bZ_{i,j}^{\top}\bbeta -e^{2f_{i}^*+2f_{j}^*}\bZ_{i,j}^{\top} \bbeta^*\right) \left(H^{(0)}_{i,j}-H_{i,j}^* \right)\nonumber\\
		&&+\left( e^{2f_{i}+2f_{j}}- e^{2f_{i}^*+2f_{j}^*} \right)  \left( \left(H^{(0)}_{i,j}\right)^2-\left( H_{i,j}^*\right) ^2 \right)\Bigg|\nonumber \\
		&\leq&\frac{1}{2p} \left|\sum_{1\leq i\neq j\leq p} \left( e^{-2f_{i}-2f_{j}}- e^{-2f_{i}^*-2f_{j}^*} \right)  \left(\hat{A}_{i,j}+A_{i,j} \right) \left(\hat{A}_{i,j}-A_{i,j} \right) \right| \nonumber\\
		&&+\frac{1}{p}\left|\sum_{1\leq i\neq j\leq p}  \left(\hat{A}_{i,j}-A_{i,j} \right) \bZ_{i,j}^{\top}\left(\bbeta -\bbeta^* \right) \right|\nonumber\\
		&&+\frac{1}{p} \left|\sum_{1\leq i\neq j\leq p} \left( e^{2f_{i}+2f_{j}}\bZ_{i,j}^{\top}\bbeta -e^{2f_{i}^*+2f_{j}^*}\bZ_{i,j}^{\top} \bbeta^*\right) \left(H^{(0)}_{i,j}-H_{i,j}^* \right)\right|\nonumber\\
		&&+\frac{1}{2p} \left|\sum_{1\leq i\neq j\leq p} \left( e^{2f_{i}+2f_{j}}- e^{2f_{i}^*+2f_{j}^*} \right)  \left((H^{(0)}_{i,j})^2-\left( H_{i,j}^*\right) ^2 \right)\right| \nonumber\\
		&=:&L_{1}+L_{2}+L_{3}+L_{4}.
	\end{eqnarray}

	Firstly, we consider $L_{1}$.   Set
	\begin{align*}
		l_{i,j}\left(\f \right)  := e^{-2f_{i}-2f_{j}} . 
	\end{align*}
	Note that for any given constant $\alpha$, there exists a big enough constant $C_{\alpha}>0$ such that
	\begin{align*}
		\frac{\partial^{k}l_{i,j}\left(\f \right)}{\partial f_{i}^{s}\partial f^{k-s}_{j}}=(-2)^ke^{-2f_{i}-2f_{j}}\leq C_{\alpha}\frac{(k-1)!}{\alpha^{k}},
	\end{align*}
	for all $k$.   
 Moreover, we also have that $  \hat{A}_{i,j}^2\leq 1$ and $\var\left(\hat{A}_{i,j}^2 \right)\leq1$.  
	By Theorem 4.1 in \cite{jiang2025two} and Condition (C2),  for any big enough constant $\alpha$ and  choosing $\alpha_{0}\in (0,\alpha/2)$,   there exist large enough constants $C_{2}>0, C'_{2}>0$ such that,  as $p\to \infty$, with probability greater than $1-(np)^{-C'_{2}}$,  
	\begin{eqnarray}\label{up1}
		L_{1}&=&\frac{1}{2p}\left| \sum_{1\leq i\neq j\leq p} \left( e^{-2f_{i}-2f_{j}}- e^{-2f_{i}^*-2f_{j}^*} \right)  \left(\hat{A}^2_{i,j}-\left( A_{i,j}\right) ^2 \right) \right|\nonumber\\
		&\leq&\frac{1}{2p}\left| \sum_{1\leq i\neq j\leq p} \left( e^{-2f_{i}-2f_{j}}- e^{-2f_{i}^*-2f_{j}^*} \right)  \left(\hat{A}^2_{i,j}-\E\hat{A}^2_{i,j} \right) \right|\nonumber\\
		&&+\frac{1}{2p}\left| \sum_{1\leq i\neq j\leq p} \left( e^{-2f_{i}-2f_{j}}- e^{-2f_{i}^*-2f_{j}^*} \right)  \left(\E\hat{A}^2_{i,j} -\left( A_{i,j}\right) ^2\right) \right|\nonumber\\
		&\leq& C_{2}\left( \frac{\log(np)e_{n}}{p}+\sqrt{\frac{\log(np)}{p}}\sigma_{n}\right)  \left\|\f-\f^*\right\|_{1}\nonumber\\
        &&+ C_2\min\left\{\left\|\f-\f^*\right\|_{2}\frac{\|\Delta_{n}\|_{\mathrm{F}}}{\sqrt{p}},\left\|\f-\f^*\right\|_{1}\frac{\|\Delta_{n}\|_{1}}{p} \right\},  
	\end{eqnarray}
	holds uniformly for all $\f\in \bB_{\infty}\left( \f^*,\alpha_{0}\right)$. Next, we derive the upper bound of $L_{2}$.  Note that   
    \begin{eqnarray}\label{up2}
		L_{2}&=&\frac{1}{p}\left|\sum_{1\leq i\neq j\leq p}  \left(\hat{A}_{i,j}-A_{i,j} \right) \bZ_{i,j}^{\top}\left( \bbeta -\bbeta^* \right) \right|\nonumber\\
            &=&\frac{1}{p}\left|\sum_{k=1}^{}\left( \beta_{k} -\beta^*_{k} \right) \left( \sum_{1\leq i\neq j\leq p}  \left(\hat{A}_{i,j}-A_{i,j} \right)   Z_{i,j,k} \right) \right|\nonumber\\
		&\leq &\frac{\| \sqrt{p}\left( \bbeta-\bbeta^*\right)\|_{1} }{p}\max_{k}\left| \frac{1}{\sqrt{p}}\sum_{1\leq i\neq j\leq p} \left(\hat{A}_{i,j} -A_{i,j}   \right)Z_{i,j,k} \right|\nonumber\\
		&\leq &\frac{\| \sqrt{p}\left( \bbeta-\bbeta^*\right)\|_{1} }{p}\max_{k}\left| \frac{1}{\sqrt{p}}\sum_{1\leq i\neq j\leq p} \left(\hat{A}_{i,j} -\E\hat{A}_{i,j} \right)Z_{i,j,k} \right|\nonumber\\
		&&+\frac{\| \sqrt{p}\left( \bbeta-\bbeta^*\right)\|_{1} }{p}\max_{k}\left|\frac{1}{\sqrt{p}}\sum_{1\leq i\neq j\leq p} \left(\E\hat{A}_{i,j}-A_{i,j} \right)Z_{i,j,k} \right|.
	\end{eqnarray}
        By Bernstein's inequality and Condition (C3), we have that there exist large enough constants $C_{3}>0$, $C_3'>0$ such that, given $\bZ_{i,j}$ and as $p\to \infty$, with probability greater than $1-(np)^{-C_{3}'}$,
        \begin{eqnarray*}
        &&\frac{\| \sqrt{p}\left( \bbeta-\bbeta^*\right)\|_{1} }{p}\max_{k}\left| \frac{1}{\sqrt{p}}\sum_{1\leq i\neq j\leq p} \left(\hat{A}_{i,j} -\E\hat{A}_{i,j} \right) Z_{i,j,k}   \right|\\
           &\leq& C_3\left( \frac{\log(np)e_{n}}{p\sqrt{p}}+\sqrt{\frac{\log(np)}{p}} \sigma_{n}\right) \| \sqrt{p}\left( \bbeta-\bbeta^*\right)\|_{1}.
        \end{eqnarray*}
        On the other hand, with the assumption
\[
c_{1} \le \bx^\top\Big(\frac{1}{p^2}\sum_{1\le i\neq j\le p}\bZ_{i,j}\bZ_{i,j}^\top\Big)\bx \le c_{2}
\quad\text{for all }\|\bx\|_{2}=1,
\]
taking $\bx=e_k$ yields $\sum_{i\neq j}  Z_{i,j,k}^2\le c_2 p^2$ for all $k$ where $e_k$ refers to the  $k$-th column of the $p\times p$ identity matrix. We thus have,
\begin{eqnarray*}
            &&\frac{\| \sqrt{p}\left( \bbeta-\bbeta^*\right)\|_{1} }{p\sqrt{p}}\max_{k}\left|\sum_{1\leq i\neq j\leq p} \left(\E\hat{A}_{i,j}-A_{i,j} \right)Z_{i,j,k} \right|\\
            &\leq&\frac{\| \sqrt{p}\left( \bbeta-\bbeta^*\right)\|_{1} }{p\sqrt{p}}\min\left\{\|\Delta_n\|_{\mathrm{F}}\max_k\Big(\sum_{1\le i\neq j\le p}Z_{i,j,k}^2\Big)^{1/2}, p\|\Delta_n\|_{1}\max_{i,j,k}\Big|Z_{i,j,k}\Big|\right\}\\
            &\lesssim&\frac{\| \sqrt{p}\left( \bbeta-\bbeta^*\right)\|_{1} }{p\sqrt{p}} \min\left\{p\| \Delta_n\|_{\mathrm{F}} ,p\|\Delta_n\|_{1}\right\}\\
            &\leq&\min\left\{\frac{\| \Delta_n\|_{\mathrm{F}} }{\sqrt{p}},\frac{\| \Delta_n\|_{1} }{\sqrt{p}} \right\} \| \sqrt{p}\left( \bbeta-\bbeta^*\right)\|_{1}.
\end{eqnarray*}

\noindent Therefore we have that there exist large enough constants $C_{4}>0$, $C_4'>0$ such that, as $p\to \infty$, with probability greater than $1-(np)^{-C_{4}'}$,
\begin{eqnarray*}
    L_{2}&\leq&C_4\left( \frac{\log(np)e_{n}}{p\sqrt{p}}+\sqrt{\frac{\log(np)}{p}} \sigma_{n}+\frac{\| \Delta_n\|_{\mathrm{F}} }{\sqrt{p}}\right) \| \sqrt{p}\left( \bbeta-\bbeta^*\right)\|_{1} .
\end{eqnarray*}
     {\color{black} Notice that Conditions (C1) and (C2) imply that the elements in $(\bH_{1},\f^*)$ are bounded as well as the initial $\bH^{(0)}$.} Denote $b_{u}:=\max_{j}\{f_{j}^*\}+\alpha_{0}$, $b_{l}:=\min_{j}{(f_{j}^*)}$, $b_{H}:=\|\bH_{1}\|_{\infty}+ \|\bH^{(0)}\|_{\infty}$  and $b_{Z}:=\max_{i\neq j}\sum_{k}|Z_{i,j,k}|$ where $Z_{i,j,k}$ is the $k$th element of $\bZ_{i,j}$. 
Then we have, under  Conditions (C1)  and (C2), there exists a constant $C_{2}>0$ such that, 
	\begin{eqnarray*}
	 L_{3}
        &=&\frac{1}{p} \left|\sum_{1\leq i\neq j\leq p} \left( e^{2f_{i}+2f_{j}}\bZ_{i,j}^{\top}\bbeta -e^{2f_{i}^*+2f_{j}^*}\bZ_{i,j}^{\top} \bbeta^*\right) \left(H^{(0)}_{i,j}- H_{i,j}^*\right)\right|\\
        &\leq&\frac{1}{p} \left|\sum_{1\leq i\neq j\leq p} \left( e^{2f_{i}+2f_{j}}\bZ_{i,j}^{\top}\bbeta -e^{2f_{i}^*+2f_{j}^*} \bZ_{i,j}^{\top}\bbeta \right) \left(H^{(0)}_{i,j}- H_{i,j}^*\right)\right|\\
        &&+ \frac{1}{p} \left|\sum_{1\leq i\neq j\leq p} \left( e^{2f_{i}^*+2f_{j}^*}\bZ_{i,j}^{\top}\bbeta -e^{2f_{i}^*+2f_{j}^*}\bZ_{i,j}^{\top} \bbeta^*\right) \left(H^{(0)}_{i,j}- H_{i,j}^*\right)\right|\\
         &\leq&\frac{1}{p} \left|\sum_{1\leq i\neq j\leq p} \left( e^{2f_{i}+2f_{j}} -e^{2f_{i}^*+2f_{j}^*}\right) \bZ_{i,j}^{\top} \bbeta^*\left(H^{(0)}_{i,j}- H_{i,j}^*\right)\right|\\
        &&+ \frac{1}{p} \left|\sum_{1\leq i\neq j\leq p} \left( \bZ_{i,j}^{\top}\bbeta -\bZ_{i,j}^{\top} \bbeta^*\right) e^{2f_{i}^*+2f_{j}^*}\left(H^{(0)}_{i,j}- H_{i,j}^*\right)\right|\\
		&\leq & 4b_{Z}\|\bbeta^*\|_{\infty}e^{4b_{u}} \min\left\{ \left\|\f-\f^*\right\|_{1}\frac{\left\| \bH^{(0)}- \bH_{1} \right\|_{1}}{p}, \left\|\f-\f^*\right\|_{2}\frac{\left\| \bH^{(0)}- \bH_{1} \right\|_{\mathrm{F}}}{\sqrt{p}}\right\}\\
        &&+b_{Z}e^{4b_{u}}\left\| \sqrt{p}\left( \bbeta^*- \bbeta\right)\right\|_{\infty} \min\left\{ \frac{\left\| \bH^{(0)}- \bH_{1} \right\|_{1}}{p},\frac{\left\| \bH^{(0)}- \bH_{1} \right\|_{\mathrm{F}}}{\sqrt{p}}\right\},
	\end{eqnarray*}
and
	\begin{eqnarray*}
		L_{4}&=&\frac{1}{2p} \left|\sum_{1\leq i\neq j\leq p} \left( e^{2f_{i}+2f_{j}}- e^{2f_{i}^*+2f_{j}^*} \right)  \left( \left(H^{(0)}_{i,j}\right)^2-\left( H_{i,j}^*\right) ^2 \right)\right|\\
        &=&\frac{1}{2p} \left|\sum_{1\leq i\neq j\leq p} \left( e^{2f_{i}+2f_{j}}- e^{2f_{i}^*+2f_{j}^*} \right)  \left(H^{(0)}_{i,j}+ H_{i,j}^* \right) \left(H^{(0)}_{i,j}-H_{i,j}^*  \right)\right|\\
		&\leq & 2b_{H}e^{4b_{u}} \left\|\f-\f^*\right\|_{2}\min\left\{\frac{\left\| \bH^{(0)}- \bH_{1} \right\|_{1}}{\sqrt{p}},\frac{\left\| \bH^{(0)}- \bH_{1} \right\|_{\mathrm{F}}}{\sqrt{p}}\right\}.
	\end{eqnarray*}
 
	Combining inequalities \eqref{up1} and \eqref{up2}, we conclude that   there exist large enough constants $C_{5}>0, C_{5}'>0$, such that, as $p\to \infty$, with probability greater than $1-(np)^{-C_{5}'}$,
	\begin{eqnarray}\label{upbound}
		&&l_{\Delta}\left(\f,\bbeta\right)\nonumber\\
		&\leq&  C_{5}\left( \frac{\log(np)e_{n}}{p}+\sqrt{\frac{\log(np)}{p}}\sigma_{n}\right)  \left\|\f-\f^*\right\|_{1}+ 	 C_5\min\left\{\left\|\f-\f^*\right\|_{2}\frac{\|\Delta_{n}\|_{\mathrm{F}}}{\sqrt{p}},\left\|\f-\f^*\right\|_{1}\frac{\|\Delta_{n}\|_{1}}{p} \right\}  \nonumber\\
        &&+ 	  C_{5}\left( \frac{\log(np)e_{n}}{p\sqrt{p}}+\sqrt{\frac{\log(np)}{p}} \sigma_{n}+\min\left\{\frac{\| \Delta_n\|_{\mathrm{F}} }{\sqrt{p}},\frac{\| \Delta_n\|_{1} }{\sqrt{p}} \right\}\right) \| \sqrt{p}\left( \bbeta-\bbeta^*\right)\|_{1} 
		  \nonumber\\
		&&+  ( 2b_{H}e^{4b_{u}}+4b_{Z}\|\bbeta^*\|_{\infty}e^{4b_{u}})\left\|\f-\f^*\right\|_{2}\min\left\{\frac{\left\| \bH^{(0)}- \bH_{1} \right\|_{1}}{\sqrt{p}},\frac{\left\| \bH^{(0)}- \bH_{1} \right\|_{\mathrm{F}}}{\sqrt{p}}\right\}\nonumber\\
        &&+ b_{Z}e^{4b_{u}}\| \sqrt{p}\left( \bbeta^*- \bbeta\right) \|_{\infty} \min\left\{\frac{\left\| \bH^{(0)}- \bH_{1} \right\|_{1}}{\sqrt{p}},\frac{\left\| \bH^{(0)}- \bH_{1} \right\|_{\mathrm{F}}}{\sqrt{p}}\right\},
	\end{eqnarray}
	holds uniformly for all $\f\in\bB_{\infty}\left( \f^*,\alpha_{0}\right) $, $\bbeta$ and $\bH^{(0)}$. Recall that
	 \begin{eqnarray*}
	 l_{\Delta}\left(\hat{\f},\hat{\bbeta}\right)
	 &\geq&  C_{1} \left( \|\hat{\f}-\f^*\|_{2}^{2}+\|\sqrt{p}\left( \hat{\bbeta}-\bbeta^*\right) \|_{2}^{2}\right).
	 \end{eqnarray*}
	We thus conclude that, as $n,p\to \infty$,  there exists a constant $C_{6}>0$ such that with probability tending to 1,
	\begin{eqnarray}\label{thm1_u}
		&&\frac{1}{\sqrt{p}}\| \left( \hat{\f},\sqrt{p}\hat{\bbeta}\right) -\left(\f^*, \sqrt{p}\bbeta^*\right) \|_{2}\nonumber\\
		&\leq&  C_{6}\left( \frac{\log(np)e_{n}}{p}+\sqrt{\frac{\log(np)}{p}}\sigma_{n}  +\min\left\{\frac{\| \Delta_n\|_{\mathrm{F}} }{p},\frac{\| \Delta_n\|_{1} }{p} \right\} \right)\nonumber \\
        &&+  \left(2b_{H}e^{4b_{u}}+4b_{Z}\|\bbeta^*\|_{\infty}e^{4b_{u}}\right)\min\left\{\frac{\left\| \bH^{(0)}- \bH_{1} \right\|_{1}}{p},\frac{\left\| \bH^{(0)}- \bH_{1} \right\|_{\mathrm{F}}}{p}\right\}  , 
	\end{eqnarray}
	holds uniformly for all $\bH^{(0)}$ such that $\|\bH^{(0)}\|_{\infty} <\infty$.     
    To consider the marginal effect, for any given $i$, define  
	\begin{align*}
		l_{p}\left(f_{i}, \f_{-i},\bbeta\right) &:=l\left(f_{i}, \f_{-i},\bbeta|\hat{\bA},\bH^{(0)}\right)=\frac{1}{2p}\sum_{1\leq j\leq p, j\ne i} \left( \hat{A}_{i,j}e^{-f_{i}-f_{j}}-e^{f_{i}+f_{j}}  \left( \bZ_{i,j}^{\top} \bbeta  +H^{(0)}_{i,j}\right)\right) ^2,\\
		l_{E}\left(f_{i}, \f_{-i},\bbeta\right)  &:=l\left(f_{i}, \f_{-i},\bbeta|\bA,\bH_{1}\right)=\frac{1}{2p}\sum_{1\leq  j\leq p, j\ne i} \left( A_{i,j}e^{-f_{i}-f_{j}}-e^{f_{i}+f_{j}}  \left( \bZ_{i,j}^{\top} \bbeta  +H_{i,j}^*\right)\right) ^2.
	\end{align*}
	By inequality \eqref{low}, we have that, for any $i$,
	\begin{eqnarray}\label{in_low}
		&& l_{p}\left(f_{i}^*, \hat{\f}_{-i},\hat{\bbeta}\right) - l_{p}\left(\hat{f}_{i},\hat{\f}_{-i},\hat{\bbeta} \right)   -\left[ l_{E}\left(f_{i}^*,\f_{-i}^*,\bbeta^*\right)  - l_{E}\left(\hat{f}_{i},\f_{-i}^*,\bbeta^*\right)  \right]\nonumber\\
		&\geq&  l_{E}\left(\hat{f}_{i},\f_{-i}^*,\bbeta^*\right) - l_{E}\left(f_{i}^*,\f_{-i}^*,\bbeta^*\right) \nonumber\\
		&\geq& C_{1}\left|f_{i}^*-\hat{f}_{i} \right|^2.
	\end{eqnarray}
	On the other hand, notice that 
	\begin{eqnarray*}
		&&l_{p}\left(f_{i}^*, \hat{\f}_{-i},\hat{\bbeta}\right) -l_{p}\left(\hat{f}_{i},\hat{\f}_{-i},\hat{\bbeta} \right)   -\left[ l_{E}\left(f_{i}^*,\f_{-i}^*,\bbeta^*\right) - l_{E}\left(\hat{f}_{i},\f_{-i}^*,\bbeta^*\right)   \right]\\
		&\leq&\left|l_{p}\left(f_{i}^*, \hat{\f}_{-i},\hat{\bbeta}\right) -l_{p}\left(\hat{f}_{i},\hat{\f}_{-i},\hat{\bbeta} \right)  - \left[ l_{p}\left(f_{i}^*,\f_{-i}^*,\bbeta^*\right)  - l_{p}\left(\hat{f}_{i},\f_{-i}^*,\bbeta^*\right)  \right]\right|\\
		&&+ \left|l_{p}\left(f_{i}^*,\f_{-i}^*,\bbeta^* \right) -l_{p}\left(\hat{f}_{i}, \f_{-i}^*,\bbeta^*\right) -\left[ l_{E}\left(f_{i}^*,\f_{-i}^*,\bbeta^*\right) - l_{E}\left(\hat{f}_{i},\f_{-i}^*,\bbeta^*\right)   \right]\right|.
	\end{eqnarray*}
   By the definition of $l_{\Delta}(\cdot)$ in \eqref{l_d}, the first term of above inequality equals to $l_{\Delta}\left(\left( \hat{f}_{i}, \f_{-i}^*\right) ,\bbeta^* \right)$. 
	{\color{black} Following \eqref{L1234}, we can decompose  $l_{\Delta}\left(\left( \hat{f}_{i}, \f_{-i}^*\right) ,\bbeta^* \right)$ as: 
    	\begin{eqnarray*} 
		&&l_{\Delta}\left(\left( \hat{f}_{i}, \f_{-i}^*\right) ,\bbeta^* \right) \nonumber\\
		&\leq&\frac{1}{2p}\Bigg| \sum_{1\leq j\leq p, j\ne i} \left( e^{-2\hat{f}_{i}-2f^*_{j}}- e^{-2f_{i}^*-2f_{j}^*} \right)  \left(\hat{A}_{i,j}^2-\left( A_{i,j}\right) ^2 \right)   \nonumber\\
		&&+2\left( e^{2\hat{f}_{i}+2f^*_{j}}\bZ_{i,j}^{\top}\bbeta^* -e^{2f_{i}^*+2f_{j}^*}\bZ_{i,j}^{\top} \bbeta^*\right) \left(H^{(0)}_{i,j}-H_{i,j}^* \right)\nonumber\\
		&&+\left( e^{2\hat{f}_{i}+2f^*_{j}}- e^{2f_{i}^*+2f_{j}^*} \right)  \left( \left(H^{(0)}_{i,j}\right)^2-\left( H_{i,j}^*\right) ^2 \right)\Bigg|\nonumber \\
		&\leq&\frac{1}{2p} \left|\sum_{1\leq j\leq p, j\ne i} \left( e^{-2\hat{f}_{i}-2f^*_{j}}- e^{-2f_{i}^*-2f_{j}^*} \right)  \left(\hat{A}_{i,j}+A_{i,j} \right) \left(\hat{A}_{i,j}-A_{i,j} \right) \right| \nonumber\\
		&&+\frac{1}{p} \left|\sum_{1\leq j\leq p, j\ne i} \left( e^{2\hat{f}_{i}+2f^*_{j}}\bZ_{i,j}^{\top}\bbeta^* -e^{2f_{i}^*+2f_{j}^*}\bZ_{i,j}^{\top} \bbeta^*\right) \left(H^{(0)}_{i,j}-H_{i,j}^* \right)\right|\nonumber\\
		&&+\frac{1}{2p} \left|\sum_{1\leq j\leq p, j\ne i} \left( e^{2\hat{f}_{i}+2f^*_{j}}- e^{2f_{i}^*+2f_{j}^*} \right)  \left((H^{(0)}_{i,j})^2-\left( H_{i,j}^*\right) ^2 \right)\right| \nonumber\\
		&=:&L_{i1}+L_{i2}+L_{i3}.
	\end{eqnarray*}
    Similar to the proof of Theorem \ref{thm1}, we have that there exist large enough positive constants $C_7$ and $C_7'$  such that, with probability greater than $1-(np)^{-C_7'}$, for $L_{i1}$,
   \begin{eqnarray*}
		L_{i1}&=&\frac{1}{2p} \left|\sum_{1\leq j\leq p, j\ne i}\left( e^{-2\hat{f}_{i}-2f^*_{j}}- e^{-2f_{i}^*-2f_{j}^*} \right)  \left(\hat{A}_{i,j}+A_{i,j} \right) \left(\hat{A}_{i,j}-A_{i,j} \right) \right| \nonumber\\
        &\leq&\frac{1}{2p} \left|\sum_{1\leq j\leq p, j\ne i}\left( e^{-2\hat{f}_{i}-2f^*_{j}}- e^{-2f_{i}^*-2f_{j}^*} \right)  \left(\hat{A}_{i,j}+A_{i,j} \right) \left(\E\hat{A}_{i,j}-A_{i,j} \right) \right| \nonumber\\
		&&+\frac{1}{2p} \left|\sum_{1\leq j\leq p, j\ne i}\left( e^{-2\hat{f}_{i}-2f^*_{j}}- e^{-2f_{i}^*-2f_{j}^*} \right)  \left(\hat{A}_{i,j}+A_{i,j} \right) \left(\hat{A}_{i,j}-\E\hat{A}_{i,j} \right) \right|\nonumber\\
        &\leq &C_{7}\left( \frac{\log(np)e_{n}}{p}+\sqrt{\frac{\log(np)}{p}}\sigma_{n}+ \min\left\{\frac{\left\| \bH^{(0)}- \bH_{1} \right\|_{1}}{p},\frac{\left\| \bH^{(0)}- \bH_{1} \right\|_{\mathrm{F}}}{\sqrt{p}}\right\}  \right)  \left|\hat{f}_{i}-f_{i}^*\right|,
	\end{eqnarray*}
    for $L_{i2}$,
    \begin{eqnarray*}
		L_{i2}&=&\frac{1}{p} \left|\sum_{1\neq j\leq p, j\ne i} \left( e^{2\hat{f}_{i}+2f_{j}^*}\bZ_{i,j}^{\top}\bbeta^* -e^{2f_{i}^*+2f_{j}^*}\bZ_{i,j}^{\top} \bbeta^*\right) \left(H^{(0)}_{i,j}- H_{i,j}^*\right)\right|\\
         &=&\frac{1}{p} \left|\sum_{1\neq j\leq p, j\ne i}  \left( e^{2\hat{f}_i+2f_{j}^*} -e^{2{f}_{i}^*+2f_{j}^*}\right) \bZ_{i,j}^{\top} \bbeta^*\left(H^{(0)}_{i,j}- H_{i,j}^*\right)\right|\\
		&\leq & C_{7}|\hat{f}_i-f_i^*|\min\left\{\frac{\left\| \bH^{(0)}- \bH_{1} \right\|_{1}}{p},\frac{\left\| \bH^{(0)}- \bH_{1} \right\|_{\mathrm{F}}}{\sqrt{p}}\right\},
	\end{eqnarray*}
    and for $L_{i3}$,
    \begin{eqnarray*}
		L_{i3}&=&\frac{1}{2p} \left|\sum_{1\leq j\leq p, j\ne i} \left( e^{2\hat{f}_{i}+2f^*_{j}}- e^{2f_{i}^*+2f_{j}^*} \right)  \left((H^{(0)}_{i,j})^2-\left( H_{i,j}^*\right) ^2 \right)\right|\\
        &=&\frac{1}{2p} \left|\sum_{1\leq j\leq p, j\ne i} \left( e^{2\hat{f}_{i}+2f^*_{j}}- e^{2f_{i}^*+2f_{j}^*} \right)  \left(H^{(0)}_{i,j} + H_{i,j}^* \right)\left(H^{(0)}_{i,j} - H_{i,j}^* \right)\right|\\
        &\leq&  C_{7}|\hat{f}_i-f_i^*|\min\left\{\frac{\left\| \bH^{(0)}- \bH_{1} \right\|_{1}}{p},\frac{\left\| \bH^{(0)}- \bH_{1} \right\|_{\mathrm{F}}}{\sqrt{p}}\right\},
	\end{eqnarray*}
    hold uniformly for any $i$ and $\bH^{(0)}$. 
    }
    Consequently,we have that  there exist large enough positive constants $C_8$ and $C_8'$  such that, with probability greater than $1-(np)^{-C_8'}$, 
	\begin{eqnarray*}
		&&\left|l_{p}\left(f_{i}^*,\f_{-i}^*,\bbeta^* \right) -l_{p}\left(\hat{f}_{i}, \f_{-i}^*,\bbeta^*\right) -\left[ l_{E}\left(f_{i}^*,\f_{-i}^*,\bbeta^*\right) - l_{E}\left(\hat{f}_{i},\f_{-i}^*,\bbeta^*\right)   \right]\right|\\
		&\leq&    C_{8}\left( \frac{\log(np)e_{n}}{p}+\sqrt{\frac{\log(np)}{p}}\sigma_{n} +\frac{\|\Delta_{n}\|_{1}}{p} + \frac{\left\| \bH^{(0)}- \bH_{1} \right\|_{1}}{p}  \right)  \left|\hat{f}_{i}-f_{i}^*\right| 
	\end{eqnarray*}
	holds uniformly for any $i$ and $\bH^{(0)}$. 
	On the other hand, there exists a big enough constant $C_9>0$, such that for all $i$, $\bH^{(0)}$,
	\begin{eqnarray}
		&&\left|l_{p}\left(f_{i}^*, \hat{\f}_{-i},\hat{\bbeta}\right) -l_{p}\left(\hat{f}_{i},\hat{\f}_{-i},\hat{\bbeta} \right)  - \left[ l_{p}\left(f_{i}^*,\f_{-i}^*,\bbeta^*\right)  - l_{p}\left(\hat{f}_{i},\f_{-i}^*,\bbeta^*\right)  \right]\right|\nonumber\\
		&\leq&\frac{1}{2p} \sum_{j\neq i;j=1}^{p}\Bigg| \hat{A}_{i,j}^2e^{-2\hat{f}_{i}-2\hat{f}_{j}}\left( e^{2\hat{f}_{i} -2f_{i}^* }-1 \right)+ \left( \bZ_{i,j}^{\top}\hat{\bbeta} +H^{(0)}_{i,j}\right)^2e^{2\hat{f}_{i}+2\hat{f}_{j}}\left( e^{2f_{i}^* -2\hat{f}_{i} }-1 \right)\nonumber\\
		&&-\hat{A}_{i,j}^2e^{-2\hat{f}_{i}-2f_{j}^*}\left( e^{2\hat{f}_{i} -2f_{i}^* }-1 \right)- \left( \bZ_{i,j}^{\top} \bbeta^*  +H^{(0)}_{i,j}\right)^2e^{2\hat{f}_{i}+2f_{j}^*}\left( e^{2f_{i}^* -2\hat{f}_{i} }-1 \right)\Bigg| \nonumber\\
		&=&\frac{1}{2p} \sum_{j\neq i;j=1}^{p}\Bigg| \hat{A}_{i,j}^2e^{-2\hat{f}_{i}-2f_{j}^*} \left(e^{2f_{j}^*-2\hat{f}_{j}}-1\right)\left( e^{2\hat{f}_{i} -2f_{i}^* }-1 \right)\nonumber \\
		&& +\left( \bZ_{i,j}^{\top}\hat{\bbeta} +H^{(0)}_{i,j}\right)^2e^{2\hat{f}_{i}+2f_{j}^*} \left(e^{2\hat{f}_{j}-2f_{j}^*}-1\right)\left( e^{2f_{i}^* -2\hat{f}_{i} }-1 \right)\nonumber \\
		&&+\left( \left(\bZ_{i,j}^{\top}\hat{\bbeta} +H^{(0)}_{i,j}\right)^2-\left( \bZ_{i,j}^{\top} \bbeta^*  +H^{(0)}_{i,j}\right)^2\right)e^{2\hat{f}_{i}+2f_{j}^*}\left( e^{2f_{i}^* -2\hat{f}_{i} }-1 \right)\Bigg|\nonumber\\
		&\leq&\frac{C_{9}}{p} \sum_{j\neq i;j=1}^{p}\left( \left|f_{j}^*-\hat{f}_{j}\right| \left|\hat{f}_{i} -f_{i}^*\right|+ \left\|\bbeta^*-\hat{\bbeta}\right\|_{\infty} \left|\hat{f}_{i} -f_{i}^*\right|\right) \nonumber \\
		&\leq&\frac{C_{9}}{p} \sum_{j\neq i;j=1}^{p}\left(  z_1\left|f_{j}^*-\hat{f}_{j}\right|^2 + z_2 \left|\hat{f}_{i} -f_{i}^*\right|^2+\left\|\bbeta^*-\hat{\bbeta}\right\|_{\infty} \left|\hat{f}_{i} -f_{i}^*\right|\right)\nonumber   \\
		&\leq&\frac{C_{9}}{p}z_{1}\|\f_{-i}^*-\hat{\f}_{-i}\|_{2}^2 + C_{3}z_{2}  \left|\hat{f}_{i} -f_{i}^*\right|^2 +C_{9}\left\|\bbeta^*-\hat{\bbeta}\right\|_{\infty} \left|\hat{f}_{i} -f_{i}^*\right|\nonumber.
	\end{eqnarray}   
	holds for any positive constants $z_1,z_2$ s.t. $z_1z_2\geq 1/4$. 
	Consequently, we have that, with probability greater than $1-(np)^{-C_2'}$,
	\begin{eqnarray}\label{in_up}
		&&l_{p}\left(f_{i}^*, \hat{\f}_{-i},\hat{\bbeta}\right) -l_{p}\left(f_{i},\hat{\f}_{-i},\hat{\bbeta} \right)   -\left[ l_{E}\left(f_{i}^*,\f_{-i}^*,\bbeta^*\right) - l_{E}\left(f_{i},\f_{-i}^*,\bbeta^*\right)   \right]\nonumber\\
		&\leq&(C_{8}+C_{9})\left( \frac{\log(np)e_{n}}{p}+\sqrt{\frac{\log(np)}{p}}\sigma_{n} +\frac{\|\Delta_{n}\|_{1}}{p}  \right) \left|\hat{f}_{i}-f_{i}^*\right|\nonumber\\
        &&+(C_{8}+C_{9}) \frac{\left\| \bH^{(0)}- \bH_{1} \right\|_{1}}{p} \left|\hat{f}_{i}-f_{i}^*\right|\nonumber\\
		&&+\frac{C_{9}}{p}z_{1}\|\f_{-i}^*-\hat{\f}_{-i}\|_{2}^2 + C_{9}z_{2}  \left|\hat{f}_{i} -f_{i}^*\right|^2
	\end{eqnarray}
	holds uniformly for all $i$, $\bH^{(0)}$ and $f_{i}\in\bB_{\infty}\left( f_{i}^*,\alpha_{0}\right)$. 
    Define $d_{n,p}:= \left\|\f^*-\hat{\f} \right\|_{\infty} B^{-1}_{n,p}\left( \bH^{(0)}\right)$
    where
	\begin{align}\label{Bnp}
		B_{n,p}\left( \bH^{(0)}\right):= \frac{\log(np)e_{n}}{p}+\sqrt{\frac{\log(np)}{p}}\sigma_{n} +\frac{\|\Delta_{n}\|_{1}}{p} + \frac{\left\| \bH^{(0)}- \bH_1 \right\|_{1}}{p}. 
	\end{align} 
    {\color{black} To establish the error bound in the \( \ell_{\infty} \) norm, it suffices to show that \( d_{n,p}\) is bounded (in probability). 
    With some abuse of notations, let $j:=\argmax_{k} |f^*_{k}-\hat{f}_{k}|$.} Following inequality \eqref{in_low}, we have  
	\begin{eqnarray}\label{in_low2}
		&& l_{p}\left(f_{j}^*, \hat{\f}_{-j},\hat{\bbeta}\right) - l_{p}\left(\hat{f}_{j},\hat{\f}_{-j},\hat{\bbeta} \right)   -\left[ l_{E}\left(f_{j}^*,\f_{-j}^*,\bbeta^*\right)  - l_{E}\left(\hat{f}_{j},\f_{-j}^*,\bbeta^*\right)  \right]\nonumber\\
		&\geq& C_{1}e^{-2\|\f^*\|_{\infty}}\left|f_{j}^*-\hat{f}_{j} \right|^2\nonumber\\
		&=& C_{1}e^{-2\|\f^*\|_{\infty}}d_{n,p}^2B_{n,p}^2\left( \bH^{(0)}\right).
	\end{eqnarray}
	On the other hand, by inequality \eqref{in_up}, there exist large enough positive constants $C_10$ and $C_{10}'$ which are independent of $\bH^{(0)}$,  such that, with probability greater than $1-(np)^{-C_{10}'}$, 
	\begin{eqnarray*}
		&&l_{p}\left(f_{j}^*, \hat{\f}_{-j},\hat{\bbeta}\right) -l_{p}\left(\hat{f}_{j},\hat{\f}_{-j},\hat{\bbeta} \right)   -\left[ l_{E}\left(f_{j}^*,\f_{-j}^*,\bbeta^*\right) - l_{E}\left(\hat{f}_{j},\f_{-j}^*,\bbeta^*\right)   \right]\\
		&\leq&(C_{8}+C_{9})B_{n,p}\left( \bH^{(0)}\right)  \left|\hat{f}_{j}-f_{j}^*\right| +\frac{C_{9}}{p}z_{1}\|\f_{-j}^*-\hat{\f}_{-j}\|_{2}^2 + C_{9}z_{2}  \left|\hat{f}_{j} -f_{j}^*\right|^2\nonumber\\
		&\leq&C_{10}\left[ d_{n,p}B_{n,p}^2\left( \bH^{(0)}\right) + z_{1}B_{n,p}^2\left( \bH^{(0)}\right) +z_{2} d_{n,p}^2B_{n,p}^2 \left( \bH^{(0)}\right)\right],
	\end{eqnarray*}
    holds for any positive $z_1,z_2$ s.t. $z_1z_2\geq 1/4$. Choosing $z_1=0.5d_{n,p}$ and  $z_2=0.5d_{n,p}^{-1}$, we have that, with probability greater than $1-(np)^{-C'_2}$, 
	\begin{eqnarray}\label{in_up_i}
		&&l_{p}\left(f_{j}^*, \hat{\f}_{-j},\hat{\bbeta}\right) -l_{p}\left(\hat{f}_{j},\hat{\f}_{-j},\hat{\bbeta} \right)   -\left[ l_{E}\left(f_{j}^*,\f_{-j}^*,\bbeta^*\right) - l_{E}\left(\hat{f}_{j},\f_{-j}^*,\bbeta^*\right)   \right]\nonumber\\
		&\leq&C_{10}\left[ d_{n,p}B_{n,p}^2\left( \bH^{(0)}\right) + z_{1}B_{n,p}^2\left( \bH^{(0)}\right) +z_{2} d_{n,p}^2B_{n,p}^2 \left( \bH^{(0)}\right)\right]\nonumber\\
		&=&2C_{10} d_{n,p}B_{n,p}^2\left( \bH^{(0)}\right).
	\end{eqnarray}
	{Combining the inequalities \eqref{in_low2} and \eqref{in_up_i}, we conclude that there exists a constant $C_{11}>0$ which is independent of $n,p$, $\bH^{(0)}$ and the random index $j$, such that $d_{n,p}\leq C_{11}$ hold with probability greater than $1-(np)^{-c_2}$ for some large enough constant $c_2$.}
	Consequently, there exists a constant $C_{12}>0$, we have that as $np \to \infty$, with probability greater than $1-(np)^{-c_2}$,
	\begin{align*}
		\left\|\f^*-\hat{\f} \right\|_{\infty}\leq C_{12}\left( \frac{\log(np)e_{n}}{p}+\sqrt{\frac{\log(np)}{p}}\sigma_{n} +\frac{\|\Delta_{n}\|_{1}}{p} + \frac{\left\| \bH^{(0)}- \bH_1 \right\|_{1}}{p}   \right) .
	\end{align*}
    Note that $\max\{e_{n},\sigma_{n}, \|\Delta_{n}\|_{\infty}, \left\| \bH^{(0)}- \bH_1 \right\|_{\infty}\}  < \alpha_{0}$, we have $(\hat{\f},\hat{\bbeta})=(\wt{\f},\wt{\bbeta})$.
\end{proof}	
\subsection{Proof of Theorem \ref{thm2}}
Before presenting the proof, we first introduce some technical lemmas.
\begin{lemma}\label{low_rank}
        Suppose $\bD^*$ is a low rank symmetric matrix with bounded elements such that rank$(\bD^*)=d'=O(1)$ and $\|\bD^*\|_{\infty}=O(1)$. Let $\lambda_s$ be the $s$-th leading eigenvalue of $\bL^*:=\bD^*$. Define
	\begin{eqnarray*}
		\hat{\bL}&:=&\argmin_{\bL}\frac{1}{2p}\left\|\bD -\bL \right\|_{\mathrm{F}}^{2} + \lambda \|\bL\|_*.
	\end{eqnarray*}
         There exist positive constants $C_1$ and $C_2$ such that by setting {\color{black}$\lambda \asymp \left\| \bD^*- \bD\right\|_{2}/p$} we have: 
        \begin{itemize}
            \item[(1)]  
            \begin{align*}
     	     \|\bL^*-\hat{\bL}\|_{\mathrm{F}} &\leq 
             4\sqrt{2d'} \left\| \bD^*- \bD\right\|_{2};
            \end{align*}
            \item[(2)]  For a given $1\leq r\le s\leq p$, and assume that $\min\{\lambda_{r-1}-\lambda_{r}, \lambda_s-\lambda_{s+1}\} >0$. Let $k=s-r+1$ and $V_\bL \in \mathbb{R}^{p \times d}$ be   the matrix containing the  $r+1$-th to $s$-th  eigenvectors of a given matrix $\bL$. We have
            \begin{align*}
                \|\sin \Theta(V_{\hat{\bL}},  V_{\bL^*})\|_{\mathrm{F}} \leq C_2\frac{k^{1 / 2}\|\bD^*-\bD\|_{2} }{ \min\{\lambda_{r-1}-\lambda_{r}, \lambda_s-\lambda_{s+1}\}};
            \end{align*}
            \item[(3)]  There exists an  orthogonal matrix ${\bf O} \in \mathbb{R}^{d\times d}$ such that
             \begin{align*}
      	\|V_{\hat{\bL}} {\bf O}-V_{\bL^*} \|_{\mathrm{F}} \leq C_2\frac{k^{1 / 2}\|\bD^*-\bD\|_{2} }{ \min\{\lambda_{r-1}-\lambda_{r}, \lambda_s-\lambda_{s+1}\}}.
      \end{align*}  
            \item[(4)] 
            Suppose that $\|V_{\bL^*}\|_{2,\infty} \lesssim            \sqrt{\frac{1}{p}}$,  and $\lambda_{d_{1}}\gtrsim \|\bD-\bD^*\|_2$.
            If the projected row noise satisfies
            \[
            \|(\bD-\bD^*)V_{\bL^*}\|_{2,\infty}
            \lesssim
            \alpha_{n,p}\frac{\|\bD-\bD^*\|_2}{\sqrt p},
            \]
            with some scalar $\alpha_{n,p}$ then
            \[
            \|\hat{\bL}-\bL^*\|_{1}
            \lesssim
            \alpha_{n,p} \|\bD-\bD^*\|_2 .
            \]
        \end{itemize}
\end{lemma}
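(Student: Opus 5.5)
The estimator $\hat{\bL}$ is the proximal map of the nuclear norm applied to $\bD$. Write $\bD=\bGamma\,\diag(e_1,\ldots,e_p)\,\bGamma^\top$. Then $\hat{\bL}=\bGamma\,\diag\{{\rm sgn}(e_i)(|e_i|-p\lambda)_+\}\,\bGamma^\top$, exactly as in Proposition \ref{Hessian_inf_H}(ii). Put $\bE:=\bD-\bD^*$ and take $\lambda=c_0\|\bE\|_2/p$ with $c_0\ge 1$. Every claim will be derived from this explicit form together with Weyl's inequality.

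\textbf{Part (1).} I would use the standard nuclear-norm argument. Optimality of $\hat{\bL}$ compared with $\bL^*$ gives
\[
\tfrac{1}{2p}\|\hat{\bL}-\bL^*\|_F^2\le \tfrac1p\langle \bE,\hat{\bL}-\bL^*\rangle+\lambda(\|\bL^*\|_*-\|\hat{\bL}\|_*).
\]
Split $\Delta=\hat{\bL}-\bL^*$ as $\mathcal P(\Delta)+\mathcal P^\perp(\Delta)$, where $\mathcal P$ is projection onto the tangent space of the rank-$d'$ matrix $\bL^*$. Then $\|\bL^*\|_*-\|\hat{\bL}\|_*\le\|\mathcal P\Delta\|_*-\|\mathcal P^\perp\Delta\|_*$, and $|\langle\bE,\Delta\rangle|\le\|\bE\|_2(\|\mathcal P\Delta\|_*+\|\mathcal P^\perp\Delta\|_*)$. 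Since $p\lambda\ge\|\bE\|_2$, the $\mathcal P^\perp$ terms cancel. The remaining bound is $\tfrac12\|\Delta\|_F^2\le 2p\lambda\|\mathcal P\Delta\|_*\le 2p\lambda\sqrt{2d'}\|\Delta\|_F$, using $\mathrm{rank}(\mathcal P\Delta)\le 2d'$. This gives $\|\Delta\|_F\le 4\sqrt{2d'}\,p\lambda$, which is the stated bound when $c_0=1$, or up to constants in general.

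\textbf{Parts (2) and (3).} Soft thresholding changes eigenvalues but not eigenvectors. So $V_{\hat{\bL}}$ coincides with the eigenvectors of $\bD$ for every index whose eigenvalue survives thresholding, which is the case in the relevant range. I would apply the Davis--Kahan variant of \cite{yu2015useful} to the pair $(\bD^*,\bD)$. It gives $\|\sin\Theta\|_F\le 2\min(k^{1/2}\|\bE\|_2,\|\bE\|_F)/\text{gap}$, which yields (2). The same paper also provides an orthogonal ${\bf O}$ with $\|V{\bf O}-V^*\|_F\le 2^{3/2}\min(\cdot)/\text{gap}$, which yields (3). If an eigenvalue is thresholded to zero, then $|\lambda_s|\lesssim\|\bE\|_2$. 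In that case the bound exceeds a constant and holds trivially, since the left side is at most $2\sqrt{k}$.

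\textbf{Part (4): the main obstacle.} This needs a row-wise (entrywise) perturbation bound. I would write $\hat{\bL}-\bL^*$ through the rank-$d_1$ truncation $\hat V\hat\Lambda_\lambda\hat V^\top-V^*\Lambda^*V^{*\top}$, where $\hat\Lambda_\lambda$ is the thresholded eigenvalues. Then $\|\cdot\|_1\le p\max_i\|{\rm row}_i\|$-type bounds reduce the problem to controlling $\|\hat V{\bf O}-V^*\|_{2,\infty}$. For this I would use the first-order expansion
\[
\hat V{\bf O}-V^*\approx \bE V^*(\Lambda^*)^{-1}+\text{higher order},
\]
which is the standard leave-one-out / $\ell_{2,\infty}$ eigenvector perturbation. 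The leading term has row norms at most $\alpha_{n,p}\|\bE\|_2/(\sqrt p\,\lambda_{d_1})$ by the projected row-noise hypothesis. The higher-order terms are handled by incoherence, $\|V^*\|_{2,\infty}\lesssim p^{-1/2}$, together with $\|\bE\|_2/\lambda_{d_1}\lesssim1$. Eigenvalue errors are at most $2p\lambda\asymp\|\bE\|_2$ by Weyl. Combining these, each row of $\hat{\bL}-\bL^*$ has $\ell_1$ norm at most $\sqrt p$ times its $\ell_2$ norm, which is $\lesssim\sqrt p\cdot\alpha_{n,p}\|\bE\|_2/\sqrt p$. This gives $\|\hat{\bL}-\bL^*\|_1\lesssim\alpha_{n,p}\|\bE\|_2$.

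The delicate point is absorbing the higher-order remainder into the $\alpha_{n,p}$ term without extra logarithmic factors. I would first try to close this through a Neumann-series expansion of $\hat V\hat\Lambda=\bD\hat V$.
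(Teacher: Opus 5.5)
Parts (1)--(3) follow the paper's proof. For (1) you use the basic inequality with the tangent-space splitting, which the paper takes from Lemma~6 of Mei et al. For (2)--(3) you apply the Yu--Wang--Samworth form of Davis--Kahan to $(\bD^*,\bD)$, using that soft-thresholding keeps the eigenvectors of $\bD$.

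One correction there: your fallback for thresholded eigenvalues should be phrased through the gap, not through $|\lambda_s|$. When $r,\dots,s$ is the null block of $\bD^*$, every eigenvalue in it is thresholded, yet the gap can be large. What saves you is that the zero eigenspace of $\hat{\bL}$ is then spanned by the matching eigenvectors of $\bD$. Genuine ambiguity arises only when that eigenspace straddles the block boundary, and that forces the gap to be $\lesssim\|\mathbf E\|_2$.

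The gap is in part (4), at the claim that the higher-order terms are handled by incoherence together with $\|\mathbf E\|_2/\lambda_{d_1}\lesssim1$. They are not, and what is lost is a factor $\sqrt p$, not a logarithm. In the Neumann series for $\hat V\hat\Lambda=\bD\hat V$, the second-order term is $\mathbf E^2V^*$ scaled by roughly $(\Lambda^*)^{-2}$. Its $i$th row is $\mathbf e_i^\top\mathbf E(\mathbf EV^*)$. Your hypothesis makes $\mathbf EV^*$ delocalized, but nothing stops one row of $\mathbf E$ from aligning with it. That row can then be as large as $\|\mathbf E\|_2^2/\lambda_{d_1}^2$, which is of order one when $\lambda_{d_1}\asymp\|\mathbf E\|_2$, whereas you need $p^{-1/2}$. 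Closing the series would require control of $\|\mathbf E^kV^*\|_{2,\infty}$ for every $k$, and the hypothesis gives only $k=1$.

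In fact (4) is false as stated. Take the following construction.
\begin{itemize}
\item Let $p$ be even, $v=\mathbf 1_p/\sqrt p$, and $u\perp v$ a unit vector with entries $\pm p^{-1/2}$.
\item Let $g$ be the normalized projection of $\mathbf e_1$ onto $\{v,u\}^\perp$, so $g_1=\sqrt{1-2/p}$.
\item Set $\bD^*=10\epsilon\,vv^\top$ and $\mathbf E=\epsilon(vu^\top+uv^\top+ug^\top+gu^\top)$ with $0<\epsilon\le p$.
\end{itemize}
Then $\mathbf Ev=\epsilon u$ and $\|\mathbf E\|_2=\sqrt2\,\epsilon$. So incoherence, $\lambda_1\gtrsim\|\mathbf E\|_2$, and the row-noise condition with $\alpha_{n,p}=1$ all hold. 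In the basis $(v,u,g)$, $\bD$ acts as $\epsilon\bigl(\begin{smallmatrix}10&1&0\\1&0&1\\0&1&0\end{smallmatrix}\bigr)$ and vanishes on the complement. With $p\lambda=\|\mathbf E\|_2$ this gives $\hat{\bL}=(\mu-\sqrt2\,\epsilon)\hat v\hat v^\top$, where $\mu\approx10.1\,\epsilon$. Here $\hat v=av+bu+cg$ with $(a,b,c)\approx(1,0.1,0.01)$, independent of $p$. So $|\hat v_1|\approx0.01$, the first row of $\hat{\bL}$ has $\ell_1$ norm of order $\epsilon\sqrt p$, and $\|\hat{\bL}-\bL^*\|_1\gtrsim\sqrt p\,\|\mathbf E\|_2$.

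The paper does not supply the missing step either. It asserts a ``standard'' expansion $\hat{\bL}-\bL^*=\mathcal P_T(\mathbf E)-p\lambda V^*{\rm sgn}(\Lambda^*)V^{*\top}+\bR$ with $\|\bR\|_{2,\infty}\lesssim(\|\mathbf E\|_2+p\lambda)^2/(\lambda_{d_1}\sqrt p)$. The example above violates exactly this remainder bound.

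That matrix-level expansion is otherwise preferable to your eigenvector-level route. If you bound $\|\hat V{\bf O}-V^*\|_{2,\infty}$ and $\|\hat\Lambda\|_2$ separately, you pick up a factor $\lambda_1/\lambda_{d_1}$, unless you keep $(\Lambda^*)^{-1}$ paired with $\Lambda^*$.

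To make (4) true you need an additional structural assumption. One option is $\|\mathbf E^kV^*\|_{2,\infty}\lesssim\alpha_{n,p}(C\|\mathbf E\|_2)^k/\sqrt p$ for all $k$, the kind of bound leave-one-out arguments give for independent entries; another is a much larger eigengap. You also need $\alpha_{n,p}\gtrsim1$, which both you and the paper use tacitly. If $\mathbf EV^*=0$, then $\hat{\bL}-\bL^*=-p\lambda V^*{\rm sgn}(\Lambda^*)V^{*\top}$, whose $\|\cdot\|_1$ norm is of order $\|\mathbf E\|_2$.
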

Next, we proceed to the proof of Theorem \ref{thm2}.
\begin{proof}
	Consider the loss function of $\bL$,
	\begin{eqnarray*}
		l_{L}\left(\bL|\f,\bbeta,\lambda\right)&:=&\frac{1}{p}\sum_{1\leq i,j\leq p} \left( \hat{A}_{i,j}e^{-f_{i}-f_{j}}-e^{f_{i}+f_{j}}  \bZ_{i,j}^{\top} \bbeta  - L_{i,j}\right) ^2 + \lambda \|\bL\|_*\\
		&=&\frac{1}{p}\left\|\bD\left(\f,\bbeta \right) -\bL \right\|_{\mathrm{F}}^{2} + \lambda \|\bL\|_*,
	\end{eqnarray*}
	where $\hat{A}_{i,i}=(\bZ_{i,i})_{k}=0$ for all $(i,j,k)$ and  $\bD\left(\f, \bbeta,\hat{\bA} \right):=\left( \hat{A}_{i,j}e^{-f_{i}-f_{j}}-e^{f_{i}+f_{j}}  \bZ_{i,j}^{\top} \bbeta\right)_{1\leq i,j\leq p}$. For brevity, $l_{L}\left(  \bL|\wt{\f},\wt{\bbeta},\lambda \right)$ is denoted by $l_{L}\left(  \bL\right)$ and $\bD\left(\wt{\f},\wt{\bbeta},\hat{\bA} \right)$ is denoted by $\bD$. We denote the minimizer of  $l_{L}\left(  \bL\right)$ as $\tilde{\bL}$. 
    Moreover, as all the conclusions hold uniformly for all restricted $(\wt{\f},\wt{\bbeta})$, the argument ``uniformly for all restricted $(\wt{\f},\wt{\bbeta})$" are also omitted in what follows. Notate that $\bL^*=\bD\left(\f^*,\bbeta^*,\bA \right):=\bD^*$,  $\bF^{(0)}:= \text{diag}\{e^{\wt{f}_{1}},\cdots, e^{\wt{f}_{p}}\}$,  $\bF^*:= \text{diag}\{e^{f_{1}^*},\cdots, e^{f_{p}^*}\}$, $\bB^{(0)}:=(\bZ_{i,j}^{\top} \wt{\bbeta})_{1\leq i,j \leq p}$ and  $\bB^*:=(\bZ_{i,j}^{\top} \bbeta^*)_{1\leq i,j \leq p}$. Note that for a diagonal matrix $\bY:=\text{diag}\{\by\}=\text{diag}\{y_1,\cdots,y_{p}\}$, we have
    \begin{eqnarray*}
        \|\bX\bY\|_{\mathrm{F}}^2 = \sum_{1\leq i,j\leq p} |X_{i,j}y_{j}|^2\leq \sum_{j=1}^{p} p \|\bX\|^2_{\infty}|y_{j}|^2 = p \|\bX\|_{\infty}^2 \|\by\|_{2}^2=p \|\bX\|_{\infty}^2\|\bY\|_{\mathrm{F}}^2.
    \end{eqnarray*}
    Denote $b_{u}:=\max_{j}{(f_{j}^*)}$, $b_{l}:=\min_{j}{(f_{j}^*)}$  and $b_{Z}:=\max_{i\neq j}\sum_{k}|Z_{i,j,k}|$. We have that there exists a positive constant $C_1$ such that
      \begin{eqnarray}\label{bound_2}
      	&&\|\bD^*-\bD\|_{2}\nonumber\\
        &=& \|(\bF^*)^{-1}\bA(\bF^*)^{-1}- \bF^*\bB^*\bF^*-((\bF^{(0)})^{-1}\hat{\bA}(\bF^{(0)})^{-1}- \bF^{(0)}\bB^{(0)}\bF^{(0)})\|_{2} \nonumber\\
        &\leq& \|(\bF^*)^{-1}\bA(\bF^*)^{-1}-(\bF^{(0)})^{-1} \hat{\bA}(\bF^{(0)})^{-1}\|_{2} +  \| \bF^*\bB^*\bF^*-\bF^{(0)}\bB^{(0)}\bF^{(0)}\|_{2} \nonumber\\
        &\leq& \|(\bF^*)^{-1}\bA(\bF^*)^{-1}-(\bF^*)^{-1} \bA(\bF^{(0)})^{-1}\|_{2} + \|(\bF^*)^{-1} \bA(\bF^{(0)})^{-1}-(\bF^{(0)})^{-1} \bA(\bF^{(0)})^{-1}\|_{2}\nonumber\\
        &&+\|(\bF^{(0)})^{-1} \bA(\bF^{(0)})^{-1}-(\bF^{(0)})^{-1} \hat{\bA}(\bF^{(0)})^{-1}\|_{2} +\| \bF^*\bB^*\bF^*-\bF^*\bB^*\bF^{(0)}\|_{2}\nonumber\\
        &&+\| \bF^*\bB^*\bF^{(0)}-\bF^{(0)}\bB^*\bF^{(0)}\|_{2}+  \| \bF^{(0)}\bB^*\bF^{(0)}-\bF^{(0)}\bB^{(0)}\bF^{(0)}\|_{2} \nonumber\\
        &\leq& \|(\bF^*)^{-1}\bA(\bF^*)^{-1}-(\bF^*)^{-1} \bA(\bF^{(0)})^{-1}\|_{\mathrm{F}} + \|(\bF^*)^{-1} \bA(\bF^{(0)})^{-1}-(\bF^{(0)})^{-1} \bA(\bF^{(0)})^{-1}\|_{\mathrm{F}}\nonumber\\
        &&+\| \bF^*\bB^*\bF^{(0)}-\bF^{(0)}\bB^*\bF^{(0)}\|_{\mathrm{F}} +\| \bF^*\bB^*\bF^*-\bF^*\bB^*\bF^{(0)}\|_{\mathrm{F}}\nonumber\\
        &&+\|(\bF^{(0)})^{-1} \bA(\bF^{(0)})^{-1}-(\bF^{(0)})^{-1} \hat{\bA}(\bF^{(0)})^{-1}\|_{2}+  \| \bF^{(0)}\bB^*\bF^{(0)}-\bF^{(0)}\bB^{(0)}\bF^{(0)}\|_{2} \nonumber\\
        &\leq& \sqrt{p}\|(\bF^*)^{-1}(\bF^*)^2(\bB^*+\bH_{1})\|_{\infty}\|(\bF^*)^2((\bF^*)^{-1}-(\bF^{(0)})^{-1})\|_{\mathrm{F}}\nonumber\\ 
        &&+ \sqrt{p}\|(\bF^*)^2((\bF^*)^{-1}-(\bF^{(0)})^{-1})\|_{\mathrm{F}} \|
        (\bB^*+\bH_{1})(\bF^*)^2(\bF^{(0)})^{-1}\|_{\infty}\nonumber\\
        &&+\sqrt{p}\| \bF^*-\bF^{(0)}\|_{\mathrm{F}}\|\bB^*\bF^{(0)}\|_{\infty} +\sqrt{p}\| \bF^*\bB^*\|_{\infty}\|\bF^*-\bF^{(0)}\|_{\mathrm{F}}\nonumber\\
        &&+\|(\bF^{(0)})^{-1} \bA(\bF^{(0)})^{-1}-(\bF^{(0)})^{-1} \hat{\bA}(\bF^{(0)})^{-1}\|_{2}+  \| \bF^{(0)}\bB^*\bF^{(0)}-\bF^{(0)}\bB^{(0)}\bF^{(0)}\|_{2} \nonumber\\
        &\leq& \sqrt{p}\|(\bF^*)^2((\bF^*)^{-1}-(\bF^{(0)})^{-1})\|_{\mathrm{F}}\|\bF^*(\bB^*+\bH_{1})\|_{\infty}\left(\|(\bF^*)(\bF^{(0)})^{-1}\|_{\infty}+1\right)\nonumber\\
        &&+\sqrt{p}\| \bF^*-\bF^{(0)}\|_{\mathrm{F}}\|\bB^*\bF^{(0)}\|_{\infty} +\sqrt{p}\| \bF^*\bB^*\|_{\infty}\|\bF^*-\bF^{(0)}\|_{\mathrm{F}}\nonumber\\
        &&+\|(\bF^{(0)})^{-1} \bA(\bF^{(0)})^{-1}-(\bF^{(0)})^{-1} \hat{\bA}(\bF^{(0)})^{-1}\|_{2}+  \| \bF^{(0)}\bB^*\bF^{(0)}-\bF^{(0)}\bB^{(0)}\bF^{(0)}\|_{2} \nonumber\\
        &\leq&\sqrt{p}e^{2b_u }\Big(b_Z\|\bbeta^*\|_\infty + \|\bH_{1}\|_{\infty}\Big)\Big(1 + e^{\max_{i}(f_i^*-f_i^{(0)})}\Big)^2\|\wt{\f} - \f^*\|_{2}  \nonumber\\
      	&& + 2\sqrt{p}\,e^{2 b_u}\, b_Z \|\bbeta^*\|_\infty \|\wt{\f} - \f^*\|_{2}+ p\,e^{2 b_u} b_Z \|\wt{\bbeta} - \bbeta^*\|_{2} \nonumber\\
      	&&+  C_{1}\Big( \log(np)\,e_{n} + \sqrt{p\log(np)}\,\sigma_{n} + \|\Delta_n\|_{2} + \max_{i}A_{i,i} +\max_{i}B^*_{i,i} \Big),
      \end{eqnarray}
      Consequently, by Lemma \ref{low_rank}, we have:
      \begin{itemize}
      	\item[(1)] 
      	\begin{eqnarray*}
      		\|\bL^*-\tilde{\bL}\|_{\mathrm{F}} &\lesssim&  \sqrt{p}\left\|\wt{\f}-\f^* \right\|_{2}  + p \left\|\wt{\bbeta}-\bbeta^* \right\|_{\infty}\\
            &&+ \log(np)e_{n} + \sqrt{p\log(np)}\sigma_{n} + \|\Delta_n\|_{2}+O(1);
      	\end{eqnarray*}
        \item[(2)]
        \begin{eqnarray*}
        	\max_{k}\left|\lambda_{k}\left(\bL^* \right)- \lambda_{k}\left(\tilde{\bL}  \right) \right| &\lesssim & \sqrt{p}\left\|\wt{\f}-\f^* \right\|_{2}  + p \left\|\wt{\bbeta}-\bbeta^* \right\|_{\infty}\\
            &&+ \log(np)e_{n} + \sqrt{p\log(np)}\sigma_{n} + \|\Delta_n\|_{2}+O(1);
        \end{eqnarray*} 
        \item [(3)] 
        there exists an orthogonal matrix ${\bf O} \in \mathbb{R}^{d\times d}$ such that
        \begin{eqnarray*}
        	&&\|V_{\tilde{\bL}}  {\bf O}-V_{\bL^*}\|_{\mathrm{F}} \\
            &\lesssim& d^{1 / 2}\frac{\sqrt{p}\left\|\wt{\f}-\f^* \right\|_{2}  + p \left\|\wt{\bbeta}-\bbeta^* \right\|_{\infty}+ \log(np)e_{n} + \sqrt{p\log(np)}\sigma_{n} + \|\Delta_n\|_{2} + O(1)}{ \min\{\lambda_{r-1}\left( \bL^*\right)-\lambda_{r}\left( \bL^*\right), \lambda_s\left( \bL^*\right)-\lambda_{s+1}\left( \bL^*\right)\}}.
        \end{eqnarray*} 
      \end{itemize}
      Here $V_{\bL}\in \mathbb{R}^{p \times s}$ and $ V_{\tilde{\bL}}\in \mathbb{R}^{p \times s}$ are the matrices containing the first $s$  leading eigenvectors of $\bL$ and $\tilde{\bL}$ respectively.
      Next, we derive the error bound of above distance between $\bH_{1}$ and $\tilde{\bH}$. Let $b_{l}^{(0)}:= \min_{i} f_{i}^{(0)}$. Similar to \eqref{bound_2}, we have that
      \begin{eqnarray}\label{bound_h2}
          \|\tilde{\bH} - \bH_{1}\|_{\mathrm{F}} &= & \|(\bF^{(0)})^{-1} \tilde{\bL}(\bF^{(0)})^{-1} - (\bF^*)^{-1}\bL^*(\bF^*)^{-1} \|_{\mathrm{F}}\nonumber\\
          &\leq&\|(\bF^{(0)})^{-1} (\tilde{\bL}-\bL^*)(\bF^{(0)})^{-1}\|_{\mathrm{F}} +\|(\bF^{(0)})^{-1} \bL^*(\bF^{(0)})^{-1}-(\bF^{(0)})^{-1}\bL^*(\bF^*)^{-1} \|_{\mathrm{F}}\nonumber\\
          &&+ \|(\bF^{(0)})^{-1} \bL^*(\bF^{*})^{-1}-(\bF^*)^{-1}\bL^*(\bF^*)^{-1} \|_{\mathrm{F}}\nonumber\\
          &\leq&\|(\bF^{(0)})^{-1} (\tilde{\bL}-\bL^*)(\bF^{(0)})^{-1}\|_{\mathrm{F}} +\sqrt{p}\|(\bF^{(0)})^{-1} \bL^*\|_{\infty}\|(\bF^{(0)})^{-1}-(\bF^*)^{-1} \|_{\mathrm{F}}\nonumber\\
          &&+ \sqrt{p}\|(\bF^{*})^{-1} \bL^*\|_{\infty}\|(\bF^{(0)})^{-1}-(\bF^*)^{-1} \|_{\mathrm{F}}\nonumber\\
          &\leq&\|(\bF^{(0)})^{-1} (\tilde{\bL}-\bL^*)(\bF^{(0)})^{-1}\|_{\mathrm{F}} +\sqrt{p}\|(\bF^{(0)})^{-1}\bF^{*} \bH_{1}\|_{\infty}\|\bF^{*}(\bF^{(0)})^{-1}-\bI \|_{\mathrm{F}}\nonumber\\
          &&+ \sqrt{p}\|\bH_{1}\|_{\infty}\|\bF^{*}(\bF^{(0)})^{-1}-\bI \|_{\mathrm{F}}\nonumber\\
          &\leq& e^{-2b_{l}^{(0)}}\|\tilde{\bL}-\bL^*\| _{\mathrm{F}} + \Big(2 + e^{\max_{i}(f_i^*-f_i^{(0)})}\Big)^2\sqrt{p}\left\|\wt{\f}-\f^* \right\|_{2}.
      \end{eqnarray}
      By Weyl's inequality, we can bound the eigen gap as:
      \begin{eqnarray*}
        	\max_{k}\left|\lambda_{k}\left(\bH_{1} \right)- \lambda_{k}\left(\tilde{\bH}  \right) \right| &\lesssim&  \|\tilde{\bH}-\bH_{1}\|_{2}\lesssim \|\tilde{\bH}-\bH_{1}\|_{\mathrm{F}}\lesssim\|\tilde{\bL}-\bL^*\| _{\mathrm{F}}.
        \end{eqnarray*}
      By inequality \eqref{bound_h2} and the Davis-Kahan Theorem \citep{yu2015useful}, we have that there exists an orthogonal matrix ${\bf O} \in \mathbb{R}^{d\times d}$ such that
      \begin{align*}
      	\|U_{\tilde{\bH}}  {\bf O}-U_{\bH_{1}}\|_{\mathrm{F}} \lesssim \frac{\|\tilde{\bL}-\bL^*\| _{\mathrm{F}}}{\min\{\lambda_{r-1}\left(\bH_{1}\right)-\lambda_{r}\left( \bH_{1}\right), \lambda_s\left( \bH_{1}\right)-\lambda_{s+1}\left( \bH_{1}\right)\}},
      \end{align*}   
      for all $(r,s)$  where $U_{\bH_{1}}$ and $U_{\tilde{\bH}}$ are defined similarly as above.

      Let $\bH_{1}$ has $d_{1,+}$ positive eigenvalue(s) and $d_{1,-}$ negative eigenvalue(s) with $d_{1,+}+d_{1,+}= d_{1}$. Let $U_{\bH_{1}}^{+}$ $\left(U_{\bH_{1}}^{-}\right)$ and $U_{\tilde{\bH}}^{+}$ $\left(U_{\tilde{\bH}}^{-}\right)$ refer to the matrix containing the first $d_{1,+}$ ( last $d_{1,-}$) eigenvectors of  $\bH$ and  $\tilde{\bH}$, respectively. Consider pairs $(r,s)= (1,d_{1,+})$ and $(r,s)= (p-d_{1,-}+1,p)$ and there exist orthogonal matrices ${\bf O}_{+} \in \mathbb{R}^{d_{1,+}\times d_{1,+}}$ and ${\bf O}_{-} \in \mathbb{R}^{d_{1,-}\times d_{1,-}}$ such that
        \begin{eqnarray*}
        	&&      	\max\left\{\|U_{\tilde{\bH}}^{+}  {\bf O}_{+}-U_{\bH_{1}}^{+} \|_{\mathrm{F}},\|U_{\tilde{\bH}}^{-}  {\bf O}_{-}-U_{\bH_{1}}^{-} \|_{\mathrm{F}}\right\} \\
            &\lesssim& \frac{\sqrt{p}\left\|\wt{\f}-\f^* \right\|_{2}  + p \left\|\wt{\bbeta}-\bbeta^* \right\|_{\infty}+ \log(np)e_{n} + \sqrt{p\log(np)}\sigma_{n} + \|\Delta_n\|_{2}}{ \min\{\lambda_{d_{1,+}}\left( \bH_{1}\right),-\lambda_{d_{1,-}} \left( \bH_{1}\right)\}}.
        \end{eqnarray*} 
        Moreover, we have that there exists an orthogonal matrix ${\bf O} \in \mathbb{R}^{d_{1}\times d_{1}}$ ($=\text{diag}({\bf O}_{+},{\bf O}_{-})$) such that
        \begin{eqnarray*}
        	&&      	\|U_{\tilde{\bH}}  {\bf O}-U_{\bH_{1}}\|_{\mathrm{F}} \\
            &\lesssim& \frac{\sqrt{p}\left\|\wt{\f}-\f^* \right\|_{2}  + p \left\|\wt{\bbeta}-\bbeta^* \right\|_{\infty}+ \log(np)e_{n} + \sqrt{p\log(np)}\sigma_{n} + \|\Delta_n\|_{2}}{\lambda_{d_{1}}}.
        \end{eqnarray*} 
        where  $U_{\bH_{1}}$ and $U_{\tilde{\bH}}$ denote the matrices whose columns are the right singular vectors corresponding to the $d_1$ largest singular values of $\bH_{1}$ and $\tilde{\bH}$ respectively.              
      Consequently, we have that there exists positive constant $C_2$ such that 
      \begin{eqnarray}\label{thm2_u}
      		&&\|\bH_{1}-\tilde{\bH}\|_{\mathrm{F}} \nonumber\\
        &\leq&\sqrt{p}e^{2b_u-2b_{l}^{(0)} }\Big(b_Z\|\bbeta^*\|_\infty + \|\bH_{1}\|_{\infty}\Big)\Big(1 + e^{\max_{i}(f_i^*-f_i^{(0)})}\Big)\,\|\wt{\f} - \f^*\|_{2}  \nonumber\\
      	&& + \left(2\sqrt{p}e^{2 b_u-2b_{l}^{(0)}} b_Z \|\bbeta^*\|_\infty+\Big(2 + e^{\max_{i}(f_i^*-f_i^{(0)})}\Big)^2\sqrt{p}\right)\|\wt{\f} - \f^*\|_{2}\nonumber\\
      	&&+ pe^{2 b_u-2b_{l}^{(0)}}b_Z \|\wt{\bbeta} - \bbeta^*\|_{2}+  C_{2}\Big( \log(np)\,e_{n} + \sqrt{p\log(np)}\,\sigma_{n} + \|\Delta_n\|_{2}\Big).
      	\end{eqnarray}
      \begin{itemize}
        \item[(2)]
        \begin{eqnarray*}
        	&&\max_{k}\left|\lambda_{k}\left(\bH_{1} \right)- \lambda_{k}\left(\tilde{\bH}  \right) \right|\\
            &\lesssim& \sqrt{p}\left\|\wt{\f}-\f^* \right\|_{2}  + p \left\|\wt{\bbeta}-\bbeta^* \right\|_{\infty}+ \log(np)e_{n} + \sqrt{p\log(np)}\sigma_{n} + \|\Delta_n\|_{2};
        \end{eqnarray*} 
        \item [(3)]  There exists an orthogonal matrix ${\bf O} \in \mathbb{R}^{d_{1}\times d_{1}}$  such that
        \begin{eqnarray*}
        	&&      	\|U_{\tilde{\bH}}  {\bf O}-U_{\bH_{1}}\|_{\mathrm{F}} \\
            &\lesssim& \frac{\sqrt{p}\left\|\wt{\f}-\f^* \right\|_{2}  + p \left\|\wt{\bbeta}-\bbeta^* \right\|_{\infty}+ \log(np)e_{n} + \sqrt{p\log(np)}\sigma_{n} + \|\Delta_n\|_{2}}{\lambda_{d_{1}}}.
        \end{eqnarray*} 
              where  $U_{\bH_{1}}$ and $U_{\tilde{\bH}}$ denote the matrices whose columns are the right singular vectors corresponding to the $d_1$ largest singular values of $\bH_{1}$ and $\tilde{\bH}$ respectively.

      \end{itemize}       
    \end{proof}
    \subsection{Proof of Lemma \ref{low_rank}}
    \begin{proof}
    Denote $l_{L}\left(\bL\right):=\frac{1}{2p}\left\|\bD -\bL \right\|_{\mathrm{F}}^{2} + \lambda \|\bL\|_*$.  Since $\hat{\bL}$ is the minimizer of 	$l_{L}\left(\bL\right)$, we have that $l_{L}\left(\hat{\bL}\right)\leq l_{L}\left(\bL^*\right)$, and it follows that
	\begin{eqnarray}\label{nu1}
		\lambda\|\hat{\bL}\|_*- \lambda \|\bL^*\|_* &\leq& \frac{1}{2p}\left( \sum_{1\leq i,j\leq p} \left(D_{i,j} - L_{i,j}^*\right) ^2 -\left(D_{i,j} - \hat{L}_{i,j}\right) ^2\right) \nonumber \\
		&=&\frac{1}{2p}\left( \sum_{1\leq i,j\leq p} \left(L_{i,j}^*+\hat{L}_{i,j} - 2D_{i,j}\right) \left(L_{i,j}^*-\hat{L}_{i,j} \right)\right) \nonumber \\
		&=&\frac{1}{2p}\left( \sum_{1\leq i,j\leq p} \left( 2\left( D_{i,j}^*- D_{i,j}\right) -\left( L_{i,j}^*-\hat{L}_{i,j}\right)  \right) \left(L_{i,j}^*-\hat{L}_{i,j} \right)\right) \nonumber\\
		&=&\frac{1}{2p}\left( \sum_{1\leq i,j\leq p} 2\left( D_{i,j}^*- D_{i,j}\right) \left(L_{i,j}^*-\hat{L}_{i,j} \right)- \left(L_{i,j}^*-\hat{L}_{i,j} \right)^2\right) \nonumber\\
		&\leq&\frac{1}{p} \left\| \bD^*- \bD\right\|_{2} \|\bL^*-\hat{\bL}\|_{*} - 	\frac{1}{2p}\|\bL^*-\hat{\bL}\|_{\mathrm{F}}^2.
	\end{eqnarray}
     Set $\lambda \asymp \left\| \bD^*- \bD\right\|_{2}/p$. By inequality \eqref{nu1} and Lemma 6 of \cite{mei2012encoding}, there exists a matrix decomposition $ \bL^*-\hat{\bL} = \bB_L' +\bB_L'' $ such that 
     \begin{eqnarray*}
     	\frac{\|\bL^*-\hat{\bL}\|_{\mathrm{F}}^2 }{2} 	&\leq&\left\| \bD^*- \bD\right\|_{2} \|\bL^*-\hat{\bL}\|_{*} +p\lambda\left( \|\bL^*\|_*-\|\hat{\bL}\|_*\right) \\
     	&\leq&\left\| \bD^*- \bD\right\|_{2} \left(\left\|\bB_L'\right\|_*+ \left\|\bB_L''\right\|_*\right) +p\lambda\left(\left\|\bB_L'\right\|_*-\left\|\bB_L''\right\|_*\right) \\
     	&\leq&2 p\left\| \bD^*- \bD\right\|_{2}\left\|\bB_L'\right\|_*\\
     	&\leq&2\sqrt{2\text{rank}\left( \bL^*\right)}\left\| \bD^*- \bD\right\|_{2}\|\bL^*-\hat{\bL}\|_{\mathrm{F}}.
     \end{eqnarray*}
     In the last step we have used the fact that 
     \begin{align*}
     	\left\|\bB_L'\right\|_*\leq\sqrt{\text{rank}\left( \bB_L'\right) }\left\|\bB_L'\right\|_{\mathrm{F}} \leq \sqrt{2\text{rank}\left( \bL^*\right)}\left\|\bB_L'\right\|_{\mathrm{F}} \leq \sqrt{2\text{rank}\left( \bL^*\right)}\left\|\bL^*-\hat{\bL}\right\|_{\mathrm{F}}.
     \end{align*}
     Then we conclude that 
     \begin{align*}
     	\|\bL^*-\hat{\bL}\|_{\mathrm{F}} &\leq4\sqrt{2\text{rank}\left( \bL^*\right)}\left\| \bD^*- \bD\right\|_{2}.
     \end{align*}      
      By inequality \eqref{bound_2}, and the Davis-Kahan theorem \citep{yu2015useful},
      we have that 
      \begin{align*}
                \|\sin \Theta(U_{\hat{\bL}},  U_{\bL})\|_{\mathrm{F}} \leq C_2\frac{k^{1 / 2}\|\bD^*-\bD\|_{2} }{ \min\{\lambda_{r-1}-\lambda_{r}, \lambda_s-\lambda_{s+1}\}} ,
            \end{align*}
      and there exists an orthogonal matrix ${\bf O} \in \mathbb{R}^{d\times d}$ such that
      \begin{align*}
      	\|U_{\hat{\bL}} {\bf O}-U_{\bL} \|_{\mathrm{F}} \leq \frac{2^{3 / 2}k^{1 / 2}\|\bD^*-\bD\|_{2} }{ \min\{\lambda_{r-1}-\lambda_{r}, \lambda_s-\lambda_{s+1}\}}.
      \end{align*}
      We next prove part (4). Since
\[
\hat{\bL}
=
\argmin_{\bL}
\frac{1}{2p}\|\bD-\bL\|_{\mathrm F}^{2}
+
\lambda\|\bL\|_*,
\]
equivalently,
\[
\hat{\bL}
=
\argmin_{\bL}
\frac{1}{2}\|\bD-\bL\|_{\mathrm F}^{2}
+
p\lambda\|\bL\|_* .
\]
Thus \(\hat{\bL}\) is the spectral soft-thresholding estimator with
threshold \(p\lambda\). Let
\[
\bL^*
=
U_{\bL^*}\Lambda_{\bL^*}U_{\bL^*}^{\top},
\qquad
P_{\bL^*}:=U_{\bL^*}U_{\bL^*}^{\top}.
\]
By the standard first-order expansion of spectral shrinkage around
\(\bL^*\), we have
\[
\hat{\bL}-\bL^*
=
\mathcal P_T(\bD-\bD^*)
-
p\lambda
U_{\bL^*}\operatorname{sgn}(\Lambda_{\bL^*})U_{\bL^*}^{\top}
+
\bR,
\]
where
\[
\mathcal P_T(\bM)
=
P_{\bL^*}\bM+\bM P_{\bL^*}-P_{\bL^*}\bM P_{\bL^*},
\]
and
\[
\|\bR\|_{2,\infty}
\lesssim
\frac{\bigl(\|\bD-\bD^*\|_2+p\lambda\bigr)^2}
{\lambda_{d_1}\sqrt p}.
\]
Since \(p\lambda\asymp \|\bD-\bD^*\|_2\) and
\(\lambda_{d_1}\gtrsim \|\bD-\bD^*\|_2\), it follows that
\[
\|\bR\|_{2,\infty}
\lesssim
\frac{\|\bD-\bD^*\|_2}{\sqrt p}.
\]
We now bound the leading terms. First, by the projected row-noise
condition,
\[
\|(\bD-\bD^*)P_{\bL^*}\|_{2,\infty}
=
\|(\bD-\bD^*)U_{\bL^*}\|_{2,\infty}
\lesssim
\alpha_{n,p}\frac{\|\bD-\bD^*\|_2}{\sqrt p}.
\]
Second, using
\[
\|U_{\bL^*}\|_{2,\infty}\lesssim p^{-1/2},
\]
we have
\[
\|P_{\bL^*}(\bD-\bD^*)\|_{2,\infty}
\le
\|U_{\bL^*}\|_{2,\infty}
\|\bD-\bD^*\|_2
\lesssim
\frac{\|\bD-\bD^*\|_2}{\sqrt p}.
\]
Similarly,
\[
\|P_{\bL^*}(\bD-\bD^*)P_{\bL^*}\|_{2,\infty}
\lesssim
\frac{\|\bD-\bD^*\|_2}{\sqrt p}.
\]
Moreover,
\[
\left\|
p\lambda
U_{\bL^*}\operatorname{sgn}(\Lambda_{\bL^*})U_{\bL^*}^{\top}
\right\|_{2,\infty}
\le
p\lambda\|U_{\bL^*}\|_{2,\infty}
\lesssim
\frac{\|\bD-\bD^*\|_2}{\sqrt p}.
\]
Combining the preceding bounds gives
\[
\|\hat{\bL}-\bL^*\|_{2,\infty}
\lesssim
\alpha_{n,p}\frac{\|\bD-\bD^*\|_2}{\sqrt p}.
\]
Since \(\hat{\bL}-\bL^*\) is symmetric,
\[
\|\hat{\bL}-\bL^*\|_1
=
\max_{1\le i\le p}
\sum_{j=1}^p |\hat L_{i,j}-L^*_{i,j}|.
\]
Therefore,
\[
\|\hat{\bL}-\bL^*\|_1
\le
\sqrt p\|\hat{\bL}-\bL^*\|_{2,\infty}
\lesssim
\alpha_{n,p}\|\bD-\bD^*\|_2.
\]
\end{proof}
    \subsection{Proof of Theorem \ref{thm3}}
    \begin{proof}
        The proof is similar to the proof of Theorems \ref{thm1} and \ref{thm2}. The only difference is the choice of the $\lambda$.
        Recall that $ \bD\left(\f,\bbeta,\bA \right):=\left( A_{i,j}e^{-f_{i}-f_{j}}-e^{f_{i}+f_{j}}  \bZ_{i,j}^{\top} \bbeta\right)_{1\leq i,j\leq p}$ and similarly let $\lambda 
        =\left\| \bD\left(\f^*,\bbeta^*,\bA \right)-\bD\left(\hat{\f},\hat{\bbeta},\hat{\bA} \right)\right\|_{2}/p$. Here the $(\hat{\f},\hat{\bbeta})$ are not given as $\wt{\f},\wt{\bbeta}$ in Theorem \ref{thm2}. Thus, we choose $\lambda \gtrsim \left\| \bD\left(\f^*,\bbeta^*,\bA \right)-\bD\left(\hat{\f},\hat{\bbeta},\hat{\bA} \right)\right\|_{2}/p$ such that by Theorem \ref{thm1}, $\lambda= C D_{n}$ with big enough constant $C>0$.
    \end{proof}
    \subsection{Proof of Proposition \ref{prop3}}
        \begin{proof}
        We may estimate the $\bK^*:=\left(e^{2f_i^*+2f_j^*}\left(H^{(1)}_{i,j}+(\bZ_{i,j})^{\top}\bbeta^* \right) \right)_{1\le i, j\le p}$ through
        \begin{align*}
            \hat{\bK}:=  \argmin_{\bY}\frac{1}{2p}\|\hat{\bA}  -  \bY\|_{\mathrm{F}}^2+ \lambda \|\bY\|_*.
		\end{align*}
        By Lemma \ref{low_rank}, take $\lambda \asymp D_{n}$ and we have that there exist positive constant $c_{1}$ such that  as $p\to \infty$,   with probability greater than $1-(p)^{-c_{1}}$,
            \begin{eqnarray}\label{L2consistency}
                \|\bK^*-\hat{\bK}\|_{\mathrm{F}}&\lesssim&\|\hat{\bA}-\bK^*\|_{2}\\
                &\lesssim&\|\hat{\bA}-\bA\|_{2} +\|\bZ_{\beta}^{(k)}\|_{2} \nonumber \\
                &\lesssim&pD_{n}. \nonumber 
            \end{eqnarray}
        Note that $\hat{\bA}-\hat{\bK}$ is an estimator of $\tilde{\bA}:=\left( e^{2f_i+2f_j}\bZ_{i,j}^{\top}\bbeta \right)_{1\leq i,j\leq p}$. Let
        \begin{eqnarray*}
            \left( \hat{\f}^{(0)},\hat{\bbeta}^{(0)}\right):= \argmin_{\f,\bbeta}\frac{1}{2p}\sum_{1\leq i\neq j\leq p}\left(\left( \hat{A}_{i,j} - \hat{K}^{(0)}_{i,j}\right)e^{-f_{i}-f_{j}}-e^{f_{i}+f_{j}}  \bZ_{i,j}^{\top} \bbeta\right)  ^2 .
        \end{eqnarray*}
        With the results in Corollaries \ref{cor2} and \ref{cor3} in Section \ref{Depen}, we have that there exist big enough positive constant  $c_2$,  for all $(n,p)$ such that  as $np$ is sufficiently large,   with probability greater than $1-(np)^{-c_2}$,
        \begin{align*}
		    	\frac{1}{\sqrt{p}}\| \left( \hat{\f}^{(0)},\sqrt{p}\hat{\bbeta}^{(0)}\right) -\left(\f, \sqrt{p}\bbeta\right) \|_{2}\lesssim \frac{\|\hat{\bA}-\hat{\bK}^{(0)} -\tilde{\bA} \|_{F}}{p} =O\left(D_{n}\right)+O\left(\frac{1}{p}\right).
		  \end{align*}
        Moreover, when Conditions (C5) and (C6) hold, we have that there exist positive constant $c_{3}$ such that  as $p\to \infty$,   with probability greater than $1-(p)^{-c_{3}}$,
        \begin{eqnarray*}
            \|\bK^*-\hat{\bK}\|_{1}&\lesssim&\|\hat{\bA}-\bK^*\|_{2}\\
                &\lesssim&\|\hat{\bA}-\bA\|_{2} +\|\bZ_{\beta}\|_{2}\\
                &\lesssim&pD_{n}.
        \end{eqnarray*}
        Similarly, with the results in Corollaries \ref{cor2} and \ref{cor3} in Section \ref{Depen}, we have that there exist big enough positive constant $c_{4}$,  for all $(n,p)$ such that  as $np$ is sufficiently large,   with probability greater than $1-(np)^{-c_{4}}$,
        \begin{align*}
		    	\| \left( \hat{\f}^{(0)},\hat{\bbeta}^{(0)}\right) -\left(\f, \bbeta\right) \|_{\infty}\leq \frac{\|\hat{\bA}-\hat{\bK}^{(0)} -\tilde{\bA} \|_{F}}{p} =O\left(D_{n}\right)+O\left(\frac{1}{p}\right).
		  \end{align*}
        Moreover, we have that 
        \begin{eqnarray*}
            &&\frac{\|\bH^{(0)}-\bH_1\|_1}{p}\\
            &=& \frac{1}{p} \max_{i} \sum_{j;j\neq i}^{p}\left|H^{(0)}_{i,j}-H_{i,j}^{(1)}\right| \\
            &=&\frac{1}{p} \max_{i} \sum_{j;j\neq i}^{p}\left|e^{-2\hat{f}_i^{(0)}-2\hat{f}_j^{(0)}}\hat{K}_{i,j}-e^{-2f_i^{*}-2 f_j^{*}}K_{i,j}^{*}\right|\\
            &\leq & \frac{1}{p} \max_{i} \sum_{j;j\neq i}^{p}\left|e^{-2\hat{f}_i^{(0)}-2\hat{f}_j^{(0)}}\hat{K}_{i,j}-e^{-2f_i^{*}-2f_j^{*}}\hat{K}_{i,j}\right|+ \frac{1}{p} \max_{i} \sum_{j;j\neq i}^{p}\left|e^{-2f_i^{*}-2f_j^{*}}\hat{K}_{i,j}-e^{-2f_i^{*}-2f_j^{*}}K_{i,j}^{*}\right|\\
            &\lesssim& O\left(D_{n}\right)+O\left(\frac{1}{p}\right).
        \end{eqnarray*} 
        \end{proof}
    \subsection{Proof of Corollary \ref{cor3}}
    \begin{proof}
        We use the results and similar arguments as those in the proof of Theorems \ref{thm1} and \ref{thm1}. 
        First derive the error bound in $\ell_{2}$ norm.
        Replace the inequalities \eqref{up1}, \eqref{up2} and \eqref{Bnp} by
        \begin{eqnarray*}
		L_{1}&=&\frac{1}{2p}\left| \sum_{1\leq i\neq j\leq p} \left( e^{-2f_{i}-2f_{j}}- e^{-2f_{i}^*-2f_{j}^*} \right)  \left(\hat{A}^2_{i,j}-\left( A_{i,j}\right) ^2 \right) \right|\nonumber\\
		&\leq& C_{2}  \frac{\left\|\f-\f^*\right\|_{1}\|\hat{\bA}-\bA\|_{1}}{p},\\
        L_{2}&=&\frac{1}{p}\left|\sum_{1\leq i\neq j\leq p}  \left(\hat{A}_{i,j}-A_{i,j} \right) \bZ_{i,j}^{\top}\left( \bbeta -\bbeta^* \right) \right|\nonumber\\
		&\leq &C_{3}\frac{\| \sqrt{p}\left( \bbeta-\bbeta^*\right)\|_{\infty}\|\hat{\bA}-\bA\|_{1} }{\sqrt{p}},\nonumber
    \end{eqnarray*}
    and remain the other conditions and arguments in the proof of Theorems \ref{thm1}.  Follow the same steps and we can prove
    \begin{eqnarray*}
		\frac{1}{\sqrt{p}}\| \left( \hat{\f},\sqrt{p}\hat{\bbeta}\right) -\left(\f^*, \sqrt{p}\bbeta^*\right) \|_{2}
			&\leq&  C\left( \frac{\| \hat{\bA}- \bA\|_{\mathrm{F}}}{p} + \frac{\left\| \bH^{(0)}- \bH_{1}\right\|_{\mathrm{F}}}{p}\right).
		\end{eqnarray*}
    Replace the inequalities \eqref{in_up} and \eqref{Bnp} by
    \begin{eqnarray*}
		&&l_{p}\left(f_{i}^*, \hat{\f}_{-i},\hat{\bbeta}\right) -l_{p}\left(f_{i},\hat{\f}_{-i},\hat{\bbeta} \right)   -\left[ l_{E}\left(f_{i}^*,\f_{-i}^*,\bbeta^*\right) - l_{E}\left(f_{i},\f_{-i}^*,\bbeta^*\right)   \right]\nonumber\\
		&\leq&(C_{2}+C_{3})\left( \frac{\| \hat{\bA}- \bA\|_{1}}{p} + \frac{\left\| \bH^{(0)}- \bH_{1}\right\|_{\mathrm{F}}}{p}   \right) \left|f_{i}-f_{i}^*\right|\nonumber\\
		&&+\frac{C_{3}}{p}z_{1}\|\f_{-i}^*-\hat{\f}_{-i}\|_{2}^2 + C_{3}z_{2}  \left|f_{i} -f_{i}^*\right|^2\\
        B_{n,p}\left( \bH^{(0)}\right)&:= &\frac{\| \hat{\bA}- \bA\|_{1}}{p} + \frac{\left\| \bH^{(0)}- \bH_{1}\right\|_{\mathrm{F}}}{p}.
	\end{eqnarray*}
	and remain the other conditions and arguments in the proof of Theorems \ref{thm1}.  Follow the same steps and we can prove
    \begin{eqnarray*}
            \left\|\f^*-\hat{\f} \right\|_{\infty}&\leq &C\left( \frac{\| \hat{\bA}- \bA\|_{1}}{p} + \frac{\left\| \bH^{(0)}- \bH_{1}\right\|_{\mathrm{F}}}{p}\right).
		\end{eqnarray*}
    \end{proof}
\end{appendix}
\end{document}